%% file: observers_paper.tex
\documentclass[a4paper,oneside,11pt,reqno]{amsart}
\usepackage[backend=bibtex,style=ieee,citestyle=numeric-comp,giveninits=true,url=false,eprint=false,doi=true,isbn=false]{biblatex}
\renewbibmacro*{doi+eprint+url}{%
	\printfield{doi}%
	\newunit\newblock%
	\iftoggle{bbx:eprint}{%
		\usebibmacro{eprint}%
	}{}%
	\newunit\newblock%
	\iffieldundef{doi}{%
		\usebibmacro{url+urldate}}%
	{}%
}

\usepackage{hyperref}
\usepackage[foot]{amsaddr}
\usepackage{amsmath}
\numberwithin{equation}{section}
\usepackage{amsthm}
\usepackage[english]{babel}
\usepackage{xurl}
\usepackage{graphicx}
\usepackage{xfrac}

\usepackage{physics}

\renewcommand{\ip}[2]{{\langle #1 , #2\rangle}}

\usepackage{subcaption}
\usepackage{todonotes}
\usepackage{algorithm}
\usepackage{algpseudocode}

\usepackage{empheq}
\usepackage{enumitem}

\usepackage{xcolor}

\usepackage{unicode-math}

\calclayout

\newcommand{\xDom}{\Omega}
\newcommand{\tDom}{[0,\infty)}
\DeclareMathOperator{\hamil}{\symcal{H}}
\DeclareMathOperator{\contG}{\symcal{G}}
\DeclareMathOperator{\disG}{\symcal{G}_\mathrm{h}}
\newcommand{\ppot}{P}
\newcommand{\su}{{u}} 
\newcommand{\hu}{\widehat{u}}
\newcommand{\dens}{\rho}
\newcommand{\flow}{m}
\newcommand{\vel}{v}
\newcommand{\hdens}{\widehat{\rho}}
\newcommand{\hflow}{\widehat{m}}
\newcommand{\hvel}{\widehat{v}}
\newcommand{\xStep}{\mathrm{h}}
\newcommand{\tStep}{\tau}
\newcommand{\numu}{{u}_\mathrm{h}} 
\newcommand{\numhu}{\widehat{u}_\mathrm{h}}
\newcommand{\numdens}{\rho_\mathrm{h}}
\newcommand{\numflow}{m_\mathrm{h}}
\newcommand{\numvel}{v_\mathrm{h}}
\newcommand{\numhdens}{\widehat{\rho}_\mathrm{h}}
\newcommand{\numhflow}{\widehat{m}_\mathrm{h}}
\newcommand{\numhvel}{\widehat{v}_\mathrm{h}}
\newcommand{\numhvelNL}[1]{\sfrac{\numhflow^{#1}}{\numhdens^{#1}}}
\newcommand{\Enthalp}{{\symscr{h}}}
\newcommand{\hEnthalp}{\widehat{\symscr{h}}}
\newcommand{\numEnthalp}{\symscr{h}_\mathrm{h}}
\newcommand{\numhEnthalp}{\widehat{\symscr{h}}_\mathrm{h}}
\newcommand{\nM}{{M}}
\newcommand{\hM}{\widehat{M}}
\newcommand{\numnM}{{M}_\mathrm{h}}
\newcommand{\numhM}{\widehat{M}_\mathrm{h}}
\newcommand{\res}{\mathrm{res}}
\newcommand{\diss}{\symcal{D}}
\newcommand{\besov}[2]{L^{#1}(0,\infty;L^{#2}(\Omega))}
\newcommand{\src}{\hat s}
\newcommand{\error}{\hat\eta}
\newcommand{\numerror}{\hat\eta_\mathrm{h}}

\newcommand{\ddt}{\overline{\partial_\tau}}
\newcommand{\cpoin}{C_\mathrm{poin}}

\theoremstyle{definition}
\newtheorem{definition}{Definition}
\newtheorem{remark}[definition]{Remark}
\theoremstyle{Theorem}
\newtheorem{theorem}[definition]{Theorem}
\newtheorem{lemma}[definition]{Lemma}
\newtheorem{corollary}[definition]{Corollary}

\begin{document}
	\title[Convergence of a fully discrete observer]{Convergence analysis of a fully discrete observer for data assimilation
		of the barotropic Euler equations}
	\author{Aidan Chaumet}
	\email[Aidan Chaumet]{chaumet@mathematik.tu-darmstadt.de}
	\author{Jan Giesselmann}
	\email[Jan Giesselmann]{giesselmann@mathematik.tu-darmstadt.de}
	
	\begin{abstract}
	We study the convergence of a discrete Luenberger observer for the barotropic Euler equations in one dimension, for measurements of the velocity only. We use a mixed finite element method in space and implicit Euler integration in time. We use a modified relative energy technique to show an error bound comparing the discrete observer to the original system's solution. The bound is the sum of three parts: an exponentially decaying part, proportional to the difference in initial value, a part proportional to the grid sizes in space and time and a part that is  proportional to the size of the measurement errors as well as the nudging parameter. The proportionality constants of the second and third parts are independent of time and grid sizes. To the best of our knowledge, this provides the first error estimate for a discrete observer for a quasilinear hyperbolic system, and implies uniform-in-time accuracy of the discrete observer for long-time simulations.

	\end{abstract}
	\keywords{Data Assimilation, Observer, Relative Energy, Convergence, Euler, Fully Discrete}
	\date{\today}
	
	\maketitle
	\input{./Sections/introduction.tex}

	\input{./Sections/summary_cont.tex}
	\input{./Sections/conv_proof.tex}
	\input{./Sections/numerics.tex}

	\input{./Sections/conclusion.tex}

	 \section*{Acknowledgements}	
	 The authors are grateful for financial support by the German Science Foundation (DFG) via grant TRR 154 (\textit{Mathematical modelling, simulation and optimization using the example of gas networks}), sub-project C05 (Project 239904186). 

\printbibliography
\end{document}

%% file: Sections/introduction.tex
\section{Introduction}
The barotropic Euler equations  are a system of quasilinear hyperbolic balance laws that model the evolution of compressible, inviscid fluids, for example in aerodynamics and gas transport in pipes. Often, only part of the state can be measured. For example, in aerodynamics,  flow velocities can be measured relatively easily using \textit{particle image velocimetry} (PIV)\cite{raffel2018particle}.  However, measuring densities is not straightforward, especially without altering the flow dynamics.
Then, \textit{data assimilation} combines measurement data with the underlying physical evolution equations, to reconstruct the full system state, including density, from the velocity data alone.

We study a \textit{distributed Luenberger observer} \cite{luenberg_intro} approach for data assimilation, also commonly known as  \textit{nudging} \cite{hoke1977dynamic}. This approach introduces a forcing term, scaled by an arbitrary but fixed nudging parameter, that drives the system towards the measurements.  Motivated by PIV, we assume that the complete velocity field is measured.

In this scenario, on the continuous level, \cite{kunkel_obs} proves exponential \textit{synchronization} for the barotropic Euler equations in one space dimension, that is, convergence of the solution to the observer system towards the original system state in  time, provided that solutions are sufficiently smooth and satisfy suitable smallness assumptions for the state and its time derivatives. However, in practice, there are sources of errors, such as discretization errors from solving the observer system numerically and measurement errors, whose influence must be understood. We choose a discretization that allows us to prove that by combining simulations with data, the observer is uniformly accurate for long times.

We discretize the Euler system in space using a mixed finite element method (MFEM), which is based on rewriting the equations as a port-Hamiltonian system, and use implicit Euler integration in time \cite{egger_ap_22}. A similar MFEM approach is also used for other hyperbolic systems that can be rewritten as a port-Hamiltonian system, such as the shallow water equations \cite{matignon_2025_pfem_shallow_water}.
Extending the error estimate from \cite{egger_ap_22} without making use of the nudging term gives an error bound that blows up exponentially in time. However, using  that the nudging term drives the discrete observer towards the original system's state, we are able to prove that errors are at most amplified by a constant factor that is independent of time, instead.

  Our main result is given by Corollary \ref{col:unif_conv}, a uniform-in-time error bound that compares the original system state with the numerical solution of the observer system. The bound is the sum of three parts: the first part decays exponentially in time and is proportional to the size of the initial error of the observer. The second part is the discretization error, which is proportional to the grid sizes in space and time, and the third part is proportional to the size of the measurement errors as well as the nudging parameter. The proportionality constants for the second and third terms are independent of time. This is the best estimate one can expect, up to improvements of the constants, because the scheme is only consistent at first order in space and time, and in a single step of the scheme, measurement errors are already multiplied by the nudging parameter.

 The proof of Corollary \ref{col:unif_conv} follows the synchronization proof from \cite{kunkel_obs}, which shows the decay of a modified relative energy. This approach is inspired by \cite{Egger2018} and \cite{zuazua_viscosity}, which use similar techniques to show exponential stability of a damped linear hyperbolic system on networks and exponential stability for a class of scalar nonlinear hyperbolic problems, respectively. Showing the decay of the relative energy  in \cite{kunkel_obs} uses the variational structure of the port-Hamiltonian formulation. The MFEM discretization retains this structure, so many steps of the proof can be easily adapted for the numerical scheme, up to additional consistency errors. This  justifies using the MFEM scheme over conservative schemes for hyperbolic balance laws, such as finite volume schemes. While the relative energy technique is well-suited to handle the present nonlinearity, it is limited to sufficiently smooth solutions. To move beyond smooth solutions, one would have to apply some different stability framework suitable for comparing discontinuous solutions, such as \cite{bressan_stability} or \cite{vasseur_shifts}. However, currently it is unclear to us how to perform this step.

Overall, our results justify using an observer in practice. In comparison to other widely used data assimilation approaches, such as variational or statistical techniques \cite{DA_Book}, observers have the lowest computational cost.
Solving the discrete observer system is typically as costly as solving the original system, because the data is introduced to the PDE on the continuous level and discretization is performed subsequently.

In contrast, variational approaches, such as the 3D-Var and 4D-Var algorithms, are based on minimizing some cost functional that measures the mismatch between model prediction and measured data \cite{DA_Book}.  The minimization typically requires computing gradients repeatedly by solving an adjoint system, which is expensive.
  Statistical approaches use Bayes's formula to estimate the system's state and estimate the uncertainty of the prediction, which is costly. For example, the ensemble Kalman filter (EnKF), going back to \cite{ensemble_kalman_94}, samples an ensemble of system states to estimate the system's statistical properties. In each step, the dynamics of the underlying equations must be solved for each state in the ensemble, making the EnKF many times more expensive than an observer.

 The remainder of this work is structured as follows. Section \ref{sec:rel_work} gives an overview of some related works. In  Section \ref{summary} we introduce the barotropic Euler equations, summarize the convergence result of \cite{kunkel_obs} for the continuous observer and discretize the observer system. Section \ref{convergence} summarizes our proof strategy and estimates the modified relative energy. In Section \ref{sec:conv_result} we complete the proof of the error bound of the discrete scheme. Section \ref{numerics} gives some numerical examples illustrating the convergence result. 
 \section{Related Work}
 \label{sec:rel_work}
 
 While observers are well established for data assimilation,  
we are not aware of any other works proving uniform-in-time error bounds for discrete observers for quasilinear hyperbolic balance laws. Here, we give an overview of some related works, focusing on fluids and hyperbolic balance laws, that study the synchronization of observers or prove error bounds for discrete observers. In the following overview, we use “synchronization” to refer to the long-time convergence of both continuous and discrete observer systems towards the original system with respect to time, and “convergence” to refer to error bounds for discrete observers that decrease under mesh refinement.
 
 For the linear wave equation, in \cite{wave_observer} the synchronization of an observer using  field measurements in some subset of the computational domain is shown on the continuous level. Then, \cite{wave_observer_numerical}  proves the convergence of a numerical discretization of the proposed observer. Additionally, \cite{wave_observer_numerical}  extends the observer to other linear wave-like equations. In \cite{kinetic_nudging} the synchronization of a Luenberger  observer for a kinetic formulation of the Saint-Venant system,  using measurements of the full system state, is shown. Some extensions, such as measurements only on some subset of the domain or noisy measurements are also discussed.  For a  linearized water waves model,  \cite{lissy_perrin_2025} investigates the synchronization of a Luenberger observer using only measurements of the free surface height, in some subset of the spatial domain, and shows convergence of the low-frequency modes of the observer after discretization.
 
  Further examples of observers for fluids include \cite{Jolly2019} where synchronization for the surface quasi-geostrophic equation using blurred-in-time measurements of the full system state is shown on the continuous level, and \cite{feireisl_3d} which proves the synchronization of a nudging approach for 3D Rayleigh-Bénard flow in a weak solution framework. 
  
On the discrete side, some examples include  a computational study of 2D Rayleigh-Bénard flow with coarse-grained velocity data \cite{Farhat2018},  a proof of the convergence and long-time accuracy of a $C^0$ interior penalty method for the Cahn-Hilliard equation using coarse measurements \cite{DIEGEL2022127042} and \cite{discrete_navier_stokes} proving uniform-in-time error bounds for a mixed finite-element semidiscretization in space of the incompressible Navier-Stokes equation with velocity measurements on a coarse spatial mesh. 

A key part of the analysis in \cite{discrete_navier_stokes} is that the viscosity damps the fine scales of the observer system, such that one does not need measurement data on an arbitrarily fine spatial mesh to show exact synchronization of the discrete observer.
In contrast, one cannot expect the same behavior for the barotropic Euler equations. Because there is no viscosity, errors made on  fine scales persist. This also poses an additional challenge for discretization, because the discretization error occurs at the fine scales. Indeed, \cite{titi_collins_2025} shows that for non-dissipative systems, such as the Korteweg-de Vries equation, with only sparse measurement data, one should not generally expect  exact synchronization with the original system state, when the high frequency modes are not captured by the measurements.  For the linearized water waves model, \cite{lissy_perrin_2025} observes that high-frequency modes reduce the speed at which the observer system achieves synchronization, and can even reduce the rate from exponential to polynomial. Throughout this work, we assume that we have measurements of the velocity at arbitrarily fine resolution, such that the data resolves even the high-frequency modes, avoiding these issues. In practice, this is unrealistic. While data may be available at high resolution (e.g. PIV), ultimately one only has discrete sensor locations that cannot capture all frequencies. A natural extension of the nudging approach to this situation is given by using interpolated data for nudging \cite{AOT_algo}.  The interpolation error then gives an additional contribution to the bound from Corollary \ref{col:unif_conv}, which does not decay over  time, because the Euler system has no damping for the frequencies which are not resolved by the measurements, see Remark \ref{remark:coarse_data} for a more quantitative explanation.

In \cite{zuazua_viscosity}, the semi-discretization in time of linear, exponentially stable systems is studied using energy techniques. As justified by \cite{wave_observer}, one can view an observer for the linear wave equation as a special case of such an exponentially stable system. Similar to the results in \cite{discrete_navier_stokes}, the analysis in \cite{zuazua_viscosity} shows  that one has to introduce suitable numerical viscosity to dissipate the fine scales of such exponentially stable systems and show convergence of the discretization.

While we consider a quasilinear equation in this work, due to the smallness conditions we require, the analysis shares similarities with the linear case. From \cite{kunkel_obs}, we know that the observer system for the barotropic Euler equations is exponentially stable.  The ideas from \cite{zuazua_viscosity} and \cite{discrete_navier_stokes} are reflected in our choice of discretization. In particular, using the implicit Euler method for time integration naturally introduces numerical dissipation into our scheme, which is essential to our convergence proof.

%% file: Sections/summary_cont.tex
\section{Problem Outline and Summary of Previous Results}
\label{summary}
In this section we briefly summarize the assumptions and results from \cite{kunkel_obs}, where the convergence of the continuous distributed observer system towards the original state is shown.

Let $(\hdens,\hvel)$ be a solution of the barotropic Euler equations on a single pipe, with a quadratic friction term and unit cross-sectional area. 
We write the Euler system as a port-Hamiltonian system: 
\begin{align}
	\partial_t \hdens + \partial_x(\hdens\hvel) &= \src_1 \label{eqn:masscons} \\
	\partial_t \hvel + \partial_x \Enthalp(\hdens,\hvel)+\gamma \abs{\hvel}\hvel &= \src_2,\label{eqn:momcons}
\end{align}
for $x \in \xDom := [0,\ell]$ and $t \in \tDom$, with source terms $\src_1,\src_2 \in W^{1,\infty}([0,\infty);W^{1,2}(\xDom)) $. This formulation is equivalent to the conservative form for Lipschitz-solutions and enables the variational arguments from \cite{kunkel_obs} and \cite{egger_ap_22}.
While the source terms $\src_1$ and $\src_2$ are not physically relevant, the subsequent analysis is possible even with source terms, and the source terms make it possible to use manufactured solutions in numerical examples. The solution of system \eqref{eqn:masscons} -- \eqref{eqn:momcons} will serve as a reference solution for the observer system,  which is why we denote its components with hats. We define the enthalpy $\Enthalp(\hdens,\hvel) \coloneqq \frac 12 \hvel^2 + \ppot'(\hdens)$, where $\ppot$ is the pressure potential, connected to the pressure law by $p'(\hdens) = \hdens \ppot''(\hdens)$. For brevity, we define $\hEnthalp = \Enthalp(\hdens,\hvel)$. For subsonic states, one may prescribe one boundary condition at each end of the pipe. We prescribe $\hflow_\partial = \hdens\hvel$ at $x = 0$ and $\hEnthalp_\partial = \hEnthalp$ at $x = \ell$.
The variable that is prescribed at either end of the pipe can be changed, but we always need one boundary condition on $\hflow$ and $\hEnthalp$ each, to eliminate boundary contributions in some estimates, see Remark \ref{rem:boundary_conds} for more details.

For the rest of this work, we assume that we measure only $\hvel$.
 Additionally, real measured data has measurement errors, which we will denote by $\error$.
Given measurements of $\hvel$ on the whole pipe, one wishes to reconstruct the full system state $(\hdens,\hvel)$ on the pipe. We use a Luenberger observer
\begin{align}
	\partial_t \dens + \partial_x(\dens\vel) &= \src_1 \label{eqn:obs_masscons} \\
	\partial_t \vel + \partial_x \Enthalp(\dens,\vel) + \gamma \abs{\vel}\vel &= \src_2 - \mu(\vel -\hvel) + \mu \eta,\label{eqn:obs_momcons}
\end{align}
with some nudging parameter $\mu > 0$, subject to the same boundary conditions as the original system.

In \cite{kunkel_obs}, it is shown that under the following assumptions, the observer system (without measurement error or source terms) converges towards the original system state exponentially in time: 
\begin{enumerate}[leftmargin = 1.5cm,label=\textbf{(A\arabic*)},ref=\textbf{(A\arabic*)}]
	\item There exists a solution of the original system \eqref{eqn:masscons} -- \eqref{eqn:momcons} with the following regularity: 
	\begin{equation}
		\hdens,\hvel \in W^{2,\infty}\left(\tDom;W^{1,1}(\xDom)\right) \cap W^{1,\infty}\left(\tDom;W^{1,\infty}(\xDom)\right).
	\end{equation}
	Further, there exist constants $\underbar{\dens},\overbar{\dens},\tilde \vel>0$, such that the  following uniform bounds are satisfied pointwise:
	\begin{equation}
		0 < \underbar{\dens} \leq \hdens(x,t) \leq \overbar{\dens} \quad \text{and} \quad \abs{\hvel(x,t)} \leq \tilde{\vel},
		\label{eqn:pointwise_unif_bd}
	\end{equation}
	for all $x \in \xDom$ and $t\in\tDom$. \label{ass:unif}
	\item The pressure potential $\ppot:\symbb{R}^+ \to \symbb{R}$ is smooth and strongly convex, i.e. there exists a constant $\underline{C}_{\ppot''}>0$ with
	\begin{equation}
		\ppot''(\rho) \geq \underline{C}_{\ppot''},
	\end{equation} 
	for all densities $\underbar{\dens} \leq \dens \leq \overbar{\dens}$. \label{ass:convexity}
	\item The solution is subsonic. More precisely, we have 
	\begin{equation}
		\rho \ppot''(\rho) \geq 4 \abs{\tilde{\vel}}^2, \quad \text{for all} \quad \underbar{\dens} \leq \dens \leq\overbar{\dens}.
	\end{equation}\label{ass:subsonic}
	\item The solution $(\hdens,\hvel)$ of the original system has sufficiently small time derivatives and both solutions have small velocities, i.e. there exist  sufficiently small constants $C_t,\overbar{\vel} >0$, such that
	\label{ass:smallderivs} 
	\begin{align}
		||\partial_t\hdens||_{\besov{\infty}{\infty}} + ||\partial_t\hvel||_{\besov{\infty}{\infty}} &\leq C_t,\\
		||\vel||_{\besov{\infty}{\infty}}, ||\hvel||_{\besov{\infty}{\infty}} &\leq \overbar{v}.
	\end{align}
	These constants will be further specified in later computations.
\end{enumerate}

 For the remainder of this paper, we abbreviate $||\cdot||_2 \coloneqq || \cdot ||_{L^2(\xDom)} $. 
The central result of \cite{kunkel_obs} is given by
\begin{theorem}(Convergence, c.f. \cite[Lemma 4.6]{kunkel_obs})
	\label{theo:kunkel_conv}
	Under the assumptions \ref{ass:unif} -- \ref{ass:smallderivs}, for any $\mu > 0$, the constants $C_t$ and $\overbar v$ can be chosen small enough, such that there exist constants $C_1,C_2>0$, such that for any $t \in \tDom$, it holds that
	$$
	||\su(t,\cdot) - \hu(t,\cdot)||_2^2 \leq C_1||\su(0,\cdot) - \hu(0,\cdot)||_2^2 \exp(-C_2 t),
	$$
	with $\su = (\dens,\vel)$ and $\hu = (\hdens,\hvel)$. 
\end{theorem}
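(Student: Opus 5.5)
We prove Theorem~\ref{theo:kunkel_conv} with a modified relative energy, following the strategy of \cite{kunkel_obs}. Let $\hamil(\dens,\vel)=\int_\xDom \frac12\dens\vel^2+\ppot(\dens)\,dx$ be the Hamiltonian. The relative energy
\[
\hamil(\su|\hu)=\hamil(\su)-\hamil(\hu)-\ip{\hamil'(\hu)}{\su-\hu}
\]
has integrand $\frac12\dens(\vel-\hvel)^2+\ppot(\dens|\hdens)$, where $\ppot(\dens|\hdens)$ denotes the relative pressure potential. By \ref{ass:unif} and \ref{ass:convexity}, and keeping $\dens$ in $[\underbar\dens,\overbar\dens]$ (which the smallness assumptions justify), it is equivalent to $\|\su-\hu\|_2^2$.

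The energy alone does not decay, because the nudging term $-\mu(\vel-\hvel)$ damps only the velocity error. We therefore add a cross term $\alpha\int_\xDom w\,(\vel-\hvel)\,dx$, with $\alpha>0$ small. Here $w$ is the antiderivative of the density error, chosen so that $\partial_x w=\dens-\hdens$ and $w=0$ at the end $x=\ell$, where the enthalpy is prescribed. We set $\mathcal L=\hamil(\su|\hu)+\alpha\int_\xDom w(\vel-\hvel)\,dx$. For $\alpha$ small, $\mathcal L$ is still equivalent to $\|\su-\hu\|_2^2$, by Poincaré's inequality $\|w\|_2\le \ell\|\dens-\hdens\|_2$.

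\textbf{Step 1.} Differentiate $\hamil(\su|\hu)$ in time using \eqref{eqn:masscons}--\eqref{eqn:obs_momcons}, with $\error=0$ and no sources.
\begin{itemize}[leftmargin=1.5cm]
\item The port-Hamiltonian structure makes the skew flux terms cancel after integration by parts.
\item The boundary terms vanish because $\flow-\hflow=0$ at $x=0$ and $\Enthalp-\hEnthalp=0$ at $x=\ell$.
\item What remains is the nudging dissipation $-\mu\int\dens(\vel-\hvel)^2$, a friction contribution that is nonpositive by monotonicity of $\vel\mapsto|\vel|\vel$ (up to terms of size $\overbar\vel\,\|\vel-\hvel\|_2^2$), and remainder terms.
\item The remainders are cubic in the error, or quadratic in the error multiplied by $\partial_t\hdens$, $\partial_t\hvel$, $\partial_x\hdens$ or $\partial_x\hvel$. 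Assumption \ref{ass:smallderivs} bounds them by $C(C_t+\overbar\vel)\|\su-\hu\|_2^2$.
\end{itemize}

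\textbf{Step 2.} Differentiate the cross term. Integrating the difference of the mass equations in $x$ gives $\partial_t w=-(\flow-\hflow)+(\flow-\hflow)|_{x=0}=-(\flow-\hflow)$, using the boundary condition at $x=0$. Testing the difference of the momentum equations with $w$ and integrating by parts gives $\int(\Enthalp-\hEnthalp)(\dens-\hdens)$; the boundary term vanishes since $w(\ell)=0$. Because $\Enthalp-\hEnthalp\approx\ppot''(\hdens)(\dens-\hdens)+\hvel(\vel-\hvel)$, this produces
\[
-\alpha\int\ppot''(\hdens)(\dens-\hdens)^2\,dx\le-\alpha\,\underline C_{\ppot''}\|\dens-\hdens\|_2^2 ,
\]
the missing dissipation in the density. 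The remaining pieces are treated as follows.
\begin{itemize}[leftmargin=1.5cm]
\item The $\hvel(\vel-\hvel)$ contribution is controlled using \ref{ass:subsonic} and the smallness of $\overbar\vel$.
\item The terms $\alpha\int(\flow-\hflow)(\vel-\hvel)$, $\alpha\mu\int w(\vel-\hvel)$ and the friction terms are absorbed by Young's inequality into $\mu\|\vel-\hvel\|_2^2$ and $\frac{\alpha}{2}\underline C_{\ppot''}\|\dens-\hdens\|_2^2$.
\end{itemize}

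\textbf{Step 3.} Choose $\alpha$ small relative to $\mu$ and $\underline C_{\ppot''}$, then $C_t$ and $\overbar\vel$ small relative to the dissipation constants. This gives $\frac{d}{dt}\mathcal L\le -c\|\su-\hu\|_2^2\le -C_2\mathcal L$. Gronwall's inequality and the norm equivalence then yield the claim with $C_1$ equal to the ratio of the equivalence constants.

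\textbf{Main obstacle.} The hardest part is the bookkeeping in Step 2. The cross term has to generate coercivity in the density while every error it introduces is absorbed. In particular, the nonlinear remainders of $\Enthalp-\hEnthalp$ and of $\flow-\hflow=\hdens(\vel-\hvel)+\vel(\dens-\hdens)$ need $L^\infty$ control on $\su-\hu$ to be treated as small. I would first try to obtain this directly from \ref{ass:smallderivs} together with the pointwise bounds \eqref{eqn:pointwise_unif_bd}, which make cubic remainders bounded by $C\|\su-\hu\|_2^2$ with a constant that is small in $\overbar\vel$.
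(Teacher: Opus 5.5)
Your architecture is the one the paper sketches for this result: relative energy plus a small multiple of $\int w\,(\vel-\hvel)$, where $w$ is an antiderivative of the density error, followed by Gronwall. However, the cross term as you define it does not produce the density dissipation you claim. There are two errors.

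\textbf{The sign of $w$.} With $\partial_x w=\dens-\hdens$, test $\partial_t(\vel-\hvel)=-\partial_x(\Enthalp-\hEnthalp)-\dots$ with $w$ and integrate by parts. This gives $+\int(\dens-\hdens)(\Enthalp-\hEnthalp)\,dx\approx+\int\ppot''(\hdens)(\dens-\hdens)^2\,dx$. So $\frac{d}{dt}\mathcal L$ picks up $+\alpha\underline{C}_{\ppot''}\|\dens-\hdens\|_2^2$, which is anti-dissipation in the density. Nothing in $\frac{d}{dt}\hamil(\su,\hu)$ can compensate it, since that derivative only dissipates the velocity error. Correspondingly, your mass step actually gives $-\alpha\int(\flow-\hflow)(\vel-\hvel)$, a helpful term rather than one that must be absorbed.

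You need the opposite sign, e.g.\ $w=-\int_0^x(\dens-\hdens)\,ds$ (equivalently $\alpha<0$). This is the continuous analogue of the paper's $\numnM^n-\numhM^n$. Then:
\begin{itemize}
\item $\partial_t w=\flow-\hflow$;
\item the momentum step yields $-\alpha\int(\dens-\hdens)(\Enthalp-\hEnthalp)\le-\alpha\underline{C}_{\ppot''}\|\dens-\hdens\|_2^2+\dots$;
\item the price is the positive term $\alpha\int(\flow-\hflow)(\vel-\hvel)\approx\alpha\int\hdens(\vel-\hvel)^2$, which must be absorbed by the nudging dissipation $\mu\underline{\dens}\|\vel-\hvel\|_2^2$.
\end{itemize}
This absorption is what forces $\alpha$ small relative to $\mu$. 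Your bullet list and Step 3 are in fact written for this corrected sign.

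\textbf{The normalization point.} $w$ must vanish at $x=0$, where $\flow$ is prescribed, not at $x=\ell$. With $w(\ell)=0$:
\begin{itemize}
\item $\partial_t w=\pm\bigl[(\flow-\hflow)(\ell)-(\flow-\hflow)(x)\bigr]$, which contains the uncontrolled trace $(\flow-\hflow)(\ell)$;
\item the integration by parts leaves the boundary term $(\Enthalp-\hEnthalp)(0)\,w(0)$, where $w(0)=\mp\int_0^\ell(\dens-\hdens)\neq0$ and there is no condition on $\Enthalp$ at $x=0$.
\end{itemize}
Your formula $\partial_t w=-(\flow-\hflow)+(\flow-\hflow)|_{x=0}$ is the one valid for $w(0)=0$, not for $w(\ell)=0$. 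With $w(0)=0$ both boundary terms vanish: at $x=0$ because $w(0)=0$ and $\flow=\hflow$, and at $x=\ell$ because $\Enthalp=\hEnthalp$. This is exactly the mechanism in Remark~\ref{rem:boundary_conds}. With these two corrections your argument follows the paper's strategy.

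\textbf{A smaller slip.} In the variables $(\dens,\vel)$ the integrand of $\hamil(\su,\hu)$ is $\frac12\dens(\vel-\hvel)^2+\ppot(\dens|\hdens)+\hvel(\dens-\hdens)(\vel-\hvel)$. It is not the conservative relative entropy. Its norm equivalence therefore relies on the subsonic condition \ref{ass:subsonic}, through the uniform convexity of $\hamil''$ in Lemma~\ref{lemma:bounded}, and not only on \ref{ass:unif}--\ref{ass:convexity}.
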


For the later treatment of the source terms and measurement errors, we add the following additional assumption: 
\begin{enumerate}[leftmargin = 1.5cm,label=\textbf{(A5)},ref=\textbf{(A5)}]
	\item The source terms have the regularity $\src_1,\src_2 \in W^{1,\infty}([0,\infty);W^{1,2}(\xDom)) $. Further, the measurement error has the regularity $\error \in C\left([0,\infty)\times \xDom\right) \cap L^\infty\left([0,\infty)\times\xDom\right)$, i.e. that the measurement error be continuous in space and time and pointwise bounded. 
	\label{ass:meas_err}
\end{enumerate}
The continuity requirement on the measurement error is a technical requirement to simplify the presentation of the estimates, that is, to show that measurement errors will give a contribution  proportional to $||\error||_{\besov{\infty}{2}}^2$. However, the numerical scheme and its analysis  only depend on the linear interpolant of the measurement errors. Thus,  more realistic settings with discrete sensors, each with their own error described by an independent random variable, are a straightforward generalization. The assumption that the measurement error is bounded in space and time is also realistic, because many measurement devices are rated to have both some bounded relative error and bounded maximum error within their operational regime.

We discretize system \eqref{eqn:obs_masscons} -- \eqref{eqn:obs_momcons} using the scheme from  \cite{egger_ap_22}. Let $M\in\symbb{N}$ and set $\xStep = \ell/M$. Let $\symcal{T}_\xStep = \{x_i = i\xStep, 0 \leq i \leq M\}$ be a uniform grid in space. Let 
$$
\symcal{P}_k(\symcal{T}_\xStep) \coloneqq \left\{p:\xDom \mapsto \symbb{R}\: \bigl\lvert \: \forall i = 0,\dotsc,M-1: p|_{[x_i,x_{i+1}]} \text{ is a polynomial of degree k}\right\}.
$$

 We then define the discrete spaces 
$$
Q_\xStep \coloneqq \symcal{P}_0(\symcal{T}_\xStep), \quad \text{and}\quad R_\xStep \coloneqq \{r \in \symcal{P}_1(\symcal{T}_\xStep): r(0) = 0\} \cap H^1(\xDom),
$$
that is, piecewise constant and piecewise linear continuous functions on the grid $\symcal{T}_\xStep$. For later use, we define projection operators. Let $\Pi_\xStep: L^2(\xDom)\to Q_\xStep$ be the $L^2$--projection onto the piecewise constant functions and let $I_\xStep: H^1(\xDom) \to \symcal{P}_1(\symcal{T}_\xStep) \cap H^1(\xDom)$ be the piecewise linear interpolation operator.

 Choose $\tStep > 0$ as the time step size. We define the time steps $t^n = n \tStep$ for all $n\in \symbb{N}$. Let $\ddt \su^n \coloneqq \sfrac{1}{\tStep}(\su^n - \su^{n-1} )$. Lastly, we define the linear interpolants of the velocity and the measurement error function respectively as
 \begin{equation}
 	\label{def:lin_interp_vel}
 	\numhvel^n \coloneqq I_\xStep(\hvel(t^n,\cdot)) \quad \text{and} \quad \numerror^n = I_\xStep(\error(t^n,\cdot)).
 \end{equation}
   Note that the measurement error does not need to be known explicitly for the observer system, because the nudging term requires only the measured data $\hvel + \error$, however we track the term separately to see its influence throughout the following analysis.
     With this, we are ready to state the fully discrete scheme.

\textbf{Numerical Scheme.} Let $\ip{\cdot}{\cdot}$ denote the $L^2$-inner product on $\xDom$.  Let $\numdens^0 \in Q_\xStep$ and $\numflow^0$ with $\numflow^0 -\hflow_\partial(t^0) \in R_\xStep$ be given.
 For $1 \leq n \leq N$, find $\numdens^n \in Q_\xStep$ and  $\numflow^n$ with $\numflow^n -\hflow_\partial(t^n) \in R_\xStep$, such that 
\begin{align}
	\ip{\ddt \numdens^n }{q_\xStep} + \ip{\partial_x \numflow^n }{q_\xStep} =&\ip{\Pi_\xStep (\src_1^n)}{q_\xStep}, \quad \forall q_\xStep \in Q_\xStep,\label{eqn:dis_scheme_1}\\
	\begin{aligned}[b]
	\ip{\ddt \numvel^n }{r_\xStep} -& \ip{\numEnthalp^n}{\partial_x r_\xStep} + \Enthalp_\partial^n r_\xStep\lvert^{x=\ell}_{x=0} 
	\\ &+\gamma\ip{\abs{\numvel^n}\numvel^n}{r_\xStep} + \mu \ip{\numvel^n - (\numhvel^n+ \numerror^n)}{r_\xStep}
	\end{aligned}
	 =& \ip{\src_2^n}{r_\xStep}, \quad \forall r_\xStep \in R_\xStep,\label{eqn:dis_scheme_2}
\end{align}
 with $\numvel^n \coloneqq \frac{\numflow^n }{\numdens^n}$ and $\numEnthalp^n \coloneqq \frac 12 (\numvel^n)^2 + \ppot'(\numdens^n)$. 
 
We define $\Enthalp_\partial^n r_\xStep\lvert^{x=\ell}_{x=0} \coloneqq \Enthalp_\partial(t^n) r_\xStep\lvert_{x=\ell} - \numEnthalp^n r_\xStep\lvert_{x=0}$, which corresponds to weakly enforcing the {boundary} condition on $\Enthalp$ at $x=\ell$. Note that $\numEnthalp^n r_\xStep\lvert_{x=0} = 0$, because $r_\xStep\lvert_{x=0} = 0$ for all $r_\xStep \in R_\xStep$. The boundary condition on $\numflow^n$ at $x=0$ is enforced strongly.

For the numerical analysis, we will compare the numerical solution to the projection of the solution of the  original system in conservative variables, $(\hdens,\hflow)$ with $\hflow \coloneqq \hdens\hvel$. We set 
\begin{align}
	\numhdens^n &\coloneqq \Pi_\xStep(\hdens(t^n,\cdot)), \quad \numhflow^n \coloneqq I_\xStep(\hflow(t^n,\cdot)), \quad \text{and}\\
	\numhu^n &\coloneqq (\numhdens^n,\numhvelNL{n}).\label{def:proj_state}
\end{align}
With these definitions, importantly, $\numhdens^n\numhvel^n \neq \numhflow^n$. Instead, one has $||\numhdens^n \numhvel^n - \numhflow^n||_\infty \leq C \xStep$ with some constant $C$ independent of $n$, which we show in Lemma \ref{lemma:dt_proj_estimates}. Additionally we define $\numhEnthalp^n \coloneqq \frac 12 (\numhvelNL{n})^2 + \ppot'(\numhdens^n)$.

 In \cite{egger_ap_22}, it is shown that, under some additional assumptions, the above scheme converges uniformly in time, at first order in $\tStep$ and $\xStep$. More precisely, one can show that, for all $n$ such that $t^n \leq T$ for some $T \in \symbb{R}$, one gets $||\numu^n - \su^n||_2^2 \leq C(\xStep^2 + \tStep^2)$, with $C \sim \exp(cT)$, coming from a discrete version of Gronwall's inequality. In the convergence proof from \cite{egger_ap_22}, one cannot expect to obtain a better bound than this exponentially large constant.
 
  A naive approach for obtaining an error bound comparing  the discrete observer to the original system could be combining the continuous synchronization result and the error bound for the numerical scheme using the following splitting of the error: 
\begin{equation}
	\label{eqn:bad_splitting}
	||\numu^n - \hu^n ||_2 \leq ||\numu^n - \su^n ||_2 + ||\su^n - \hu^n||_2.
\end{equation}
The first term can be estimated using the error estimate of the numerical scheme (after some simple modifications to include the nudging term of the observer system). The second term can be controlled using the synchronization of the continuous observer system, Theorem~\ref{theo:kunkel_conv}.  Thus, one gets an overall error estimate of $||\numu^n - \hu^n ||_2^2 \lesssim  \left(\xStep^2 + \tStep^2\right)\exp(C_1 T) + \exp(-C_2T)$ at time $t = T$. However, this estimate is not uniform-in-time, which leads to much more restrictive bounds on the grid sizes for long-time simulations. To guarantee $||\numu^n - \hu^n ||_2^2 \leq \epsilon$ for some prescribed error tolerance $\epsilon > 0$ one requires $T \gtrsim -\log(\epsilon)/C_2$ and $(\xStep^2 + \tStep^2) \lesssim \epsilon^{1 + \frac{C_1}{C_2}}$. From the proofs of \cite{egger_ap_22} and \cite{kunkel_obs}, one finds that $C_1 \gg C_2$, such that the estimate scales poorly for $\epsilon\to 0$. 

We improve the error bound by using a different splitting of the error. By comparing the solution of the discrete observer directly to the projection of the original system state instead, we can make use of the nudging term and obtain an error bound that does not deteriorate over time. This leads to the following splitting:
\begin{equation}
	||\hu^n - \numu^n||_2 \leq ||\hu^n - \numhu^n ||_2 + ||\numhu^n - \numu^n||_2.	\label{eqn:dis_error_split}
\end{equation}
Due to Assumption \ref{ass:unif}, and in particular, since $\hu$ is Lipschitz uniformly in time, the projection error can be bounded by $||\hu^n - \numhu^n ||_2 \leq C\xStep$ for some $C>0$ independent of $T$.
 For the second term, the nudging term makes it possible to prove an error estimate with a constant independent time.

%% file: Sections/conv_proof.tex
\section{Convergence of the discrete observer.}
\label{convergence}
In this section, we will prove the convergence of the discrete observer system.
Our final error estimate is given by the following corollary, which we will prove later:
{
	\renewcommand{\thedefinition}{\ref{col:unif_conv}}
\begin{corollary}
	Let $\hu$ be a solution of system \eqref{eqn:masscons} -- \eqref{eqn:momcons}  and $\numu^n$ be a solution of \eqref{eqn:dis_scheme_1} -- \eqref{eqn:dis_scheme_2}. Further,  let Assumptions \ref{ass:unif} -- \ref{ass:smallderivs}, \ref{ass:meas_err} and \ref{ass:discrete} hold. Let  $\alpha, \tStep_0 \in \symbb{R}$ be chosen small enough, such that $C_1C_0^{-1} >(\alpha + \tStep_0) C_{2}c_0^{-1}$, with the constants as introduced in Theorem \ref{thm:unif_conv}.
	Then, for all $0 < \tStep \leq \tStep_0$ and $0 <\xStep \leq \xStep_0$  with $\tStep_0$ and $\xStep_0$ sufficiently small, for any $n \in \symbb{N}$, it holds that 
	\begin{equation}
		\begin{aligned}
			\frac 12 c_0 ||\numu^n -\hu^n||_2^2 &\leq 2 C_0 \left(\prod_{j=1}^n {w}_j^{-1}\right) ||\numu^0 -\numhu^0||_2^2 + \tilde C(\tStep^2 + \xStep^2) + \mu^2 C||\error||_{\besov{\infty}{2}}^2,
		\end{aligned}
	\end{equation}
	 In particular, the weights $ w_n$ behave as $1 + \symcal{O}(\tStep)$, such that their product represents a discrete approximation to exponential decay.
	 Further, the constants  $C_0, C$ and $\tilde C$ independent of $n$, $\tStep$ and $\xStep$.
\end{corollary}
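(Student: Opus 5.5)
The plan is to derive the corollary from Theorem \ref{thm:unif_conv}, combined with the splitting \eqref{eqn:dis_error_split} and a projection estimate. I expect Theorem \ref{thm:unif_conv} to give a bound on a modified relative energy $\symcal{E}^n$ between $\numu^n$ and $\numhu^n$. That modified energy is the relative energy plus a small cross term weighted by $\alpha$, in the spirit of \cite{kunkel_obs}. The theorem should have the form of a discrete differential inequality
\begin{equation*}
	w_n \symcal{E}^n \leq \symcal{E}^{n-1} + \tStep\, C_{2}\bigl(\tStep^2+\xStep^2 + \mu^2\|\numerror^n\|_2^2\bigr),
\end{equation*}
with $w_n = 1 + \tStep\,(\text{decay rate})$. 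The decay rate is positive precisely under the hypothesis $C_1C_0^{-1} > (\alpha+\tStep_0)C_2c_0^{-1}$: the dissipation from the nudging term and the implicit Euler numerical diffusion, measured by $C_1$ and converted to $\symcal{E}$ through the equivalence constant $C_0$, must dominate the errors from the cross term and from time stepping. Alongside this, the theorem should give the norm equivalence $c_0\|\numu^n-\numhu^n\|_2^2 \le \symcal{E}^n \le C_0\|\numu^n-\numhu^n\|_2^2$, valid as long as $\numu^n$ stays in the state region of \ref{ass:unif}--\ref{ass:subsonic} and \ref{ass:discrete}.

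\textbf{Step 1.} Unroll the recursion by induction on $n$:
\begin{equation*}
	\symcal{E}^n \le \Bigl(\prod_{j=1}^n w_j^{-1}\Bigr)\symcal{E}^0 + \sum_{k=1}^n \tStep\Bigl(\prod_{j=k}^n w_j^{-1}\Bigr) C_2 \bigl(\tStep^2+\xStep^2+\mu^2\|\numerror^k\|_2^2\bigr).
\end{equation*}
Since $w_j \ge 1+\kappa\tStep$ with $\kappa>0$ independent of $\tStep$ and $\xStep$, the geometric sum satisfies $\sum_k \tStep (1+\kappa\tStep)^{-(n-k+1)} \le 1/\kappa$, uniformly in $n$. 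This is where the time-independence of the constants $\tilde C$ and $C$ comes from.

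\textbf{Step 2.} Bound the measurement errors. By \ref{ass:meas_err}, continuity of $\error$ together with stability of $I_\xStep$ in $L^\infty$ gives $\|\numerror^k\|_2 \le \ell^{1/2}\|\error(t^k)\|_\infty$. This yields a time-uniform bound, which I then rewrite in the form $C\|\error\|_{\besov{\infty}{2}}^2$ as in the statement. The value $\error(t^k,\cdot)$ is meaningful because $\error$ is continuous. Use $\symcal{E}^0 \le C_0\|\numu^0-\numhu^0\|_2^2$ for the initial term.

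\textbf{Step 3.} Return to $\hu$. Use
\[
	\|\numu^n-\hu^n\|_2^2 \le 2\|\numu^n-\numhu^n\|_2^2 + 2\|\numhu^n-\hu^n\|_2^2 .
\]
By \ref{ass:unif} and Lemma \ref{lemma:dt_proj_estimates}, $\|\numhu^n-\hu^n\|_2 \le C\xStep$ uniformly in $n$, using standard estimates for $\Pi_\xStep$ and $I_\xStep$ and $\numhdens^n \ge \underbar{\dens}$. Multiplying by $\tfrac12 c_0$ and applying the lower norm equivalence gives the stated inequality, with the factor $2C_0$ on the initial term and the projection error absorbed into $\tilde C\xStep^2$.

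\textbf{Main obstacle.} The main difficulty is justifying that the hypotheses of Theorem \ref{thm:unif_conv} hold at every step $n$. The inequality presumably needs $\numu^n$ to stay close to $\numhu^n$ in $L^\infty$, so that the pointwise bounds, subsonicity and smallness of $\numvel$ continue to hold. I would handle this by a bootstrap or continuity argument in $n$. Assume the bounds hold up to step $n-1$. The $L^2$ estimate just proved, combined with an inverse inequality $\|\cdot\|_\infty \le C\xStep^{-1/2}\|\cdot\|_2$ on the finite element spaces, gives $\|\numu^n - \numhu^n\|_\infty \lesssim \xStep^{-1/2}(\text{initial error} + \tStep+\xStep+\mu\|\error\|)$. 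This is small for $\tStep_0$ and $\xStep_0$ small, under \ref{ass:discrete}, which presumably couples the step sizes, the initial data and the measurement error. The induction then closes, and no step requires a constant that grows with $n$.
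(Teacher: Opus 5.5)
Your Step 3 is exactly the paper's proof. Theorem \ref{thm:unif_conv} already bounds $\frac12 c_0\|\numu^n-\numhu^n\|_2^2$ by $2C_0\bigl(\prod_{j=1}^n w_j^{-1}\bigr)\|\numu^0-\numhu^0\|_2^2+C(\tStep^2+\xStep^2)+\mu^2C\|\error\|_{\besov{\infty}{2}}^2$, with $n$-independent constants. The left-hand side also carries a nonnegative $L^3$-dissipation sum, which you simply drop. The paper then concludes with the splitting \eqref{eqn:dis_error_split}, $\|\hdens^n-\numhdens^n\|_2\le C\xStep$ (Lemma \ref{lemma:proj_estimates}) and $\|\numhvelNL{n}-\hvel^n\|_2\le C\xStep$ (Lemma \ref{lemma:dt_proj_estimates}).

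Steps 1 and 2 therefore redo work internal to the theorem, and the recursion you guess is not the one proved there:
\begin{itemize}
\item the cross term is $\delta\disG(\numu^n,\numhu^n)$, weighted by $\delta$ rather than $\alpha$;
\item the inequality is $\ddt a_n\le -C_1C_0^{-1}a_n+(\alpha+\tStep_0)C_2c_0^{-1}a_{n-1}+g_n+\ddt f_n$, with a telescoping term $f_n=\ip{\hEnthalp^n-\numhEnthalp^n}{\numdens^n-\numhdens^n}$, and this is what dictates the specific form of $w_n$.
\end{itemize}
Your geometric-sum bound is the right mechanism for $n$-independence inside the theorem. On constants: applying $(a+b)^2\le 2a^2+2b^2$ to the theorem as stated gives $4C_0$ rather than $2C_0$ on the initial error. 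A weighted splitting gives $(1+\varepsilon)2C_0$ at the price of a larger $\tilde C$. The paper's one-line proof is equally loose here.

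Two other parts of your proposal would fail as written, although neither is needed.

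\textbf{Step 2.} The bound $\|\numerror^k\|_2\le\ell^{1/2}\|\error(t^k)\|_\infty$ is correct. It cannot be rewritten as $C\|\error\|_{\besov{\infty}{2}}$, because the spatial $L^\infty$ norm is not controlled by the $L^2$ norm. This route only yields a bound in terms of $\|\error\|_{L^\infty((0,\infty)\times\xDom)}$. The paper's lemmas pass directly from $\|\numerror^n\|_2$ to $\|\error\|_{\besov{\infty}{2}}$, which continuity alone does not justify since nodal interpolation is not $L^2$-stable. For the corollary you just inherit the theorem's term.

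\textbf{The ``main obstacle''.} This is not an obstacle: \ref{ass:discrete} is an explicit hypothesis of the corollary. The paper states that it cannot prove such bounds and proposes to check them a posteriori. Your bootstrap would not close in any case:
\begin{itemize}
\item The scheme is implicit, so the energy estimate at step $n$ already uses the pointwise bounds on $\numu^n$ and on intermediate values between $\numu^{n-1}$ and $\numu^n$. Assuming them only up to step $n-1$ is circular.
\item The inverse inequality multiplies $\|\numu^0-\numhu^0\|_2$ and $\mu\|\error\|_{\besov{\infty}{2}}$ by $\xStep^{-1/2}$, which grows rather than shrinks as $\xStep_0\to 0$.
\item The term $\tStep\xStep^{-1/2}$ is small only under a step-size coupling, and \ref{ass:discrete} imposes no coupling between step sizes, initial data and measurement errors.
\end{itemize}
Drop that paragraph and invoke \ref{ass:discrete} directly.
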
\noindent
Corollary \ref{col:unif_conv} shows that the discrete observer synchronizes with the original system solution, up to some base level of error due to measurement errors and the discretization errors.
	\addtocounter{definition}{-1}
} 
%
%

We use a relative energy framework, as presented in \cite{kunkel_obs}, and adapt the estimates therein to the discrete setting. In order to motivate the more technical discrete computations, we  summarize the strategy from \cite{kunkel_obs} in the following section.

The system \eqref{eqn:masscons} -- \eqref{eqn:momcons} is equipped with the energy 
\begin{equation}
	\label{eqn:energy}
	\hamil(\hdens,\hvel) = \int_0^\ell \frac 12 \hvel^2 + \ppot(\hdens) \dd{x}
\end{equation}
and the state variables $(\hdens,\hvel)$ are connected to the co-state variables $(\hEnthalp,\hflow)$ by the variational derivatives of this energy, that is, $(\hEnthalp,\hflow) = (\delta_{\dens} \hamil(\hu),\delta_{\vel} \hamil(\hu))$. 
We use the energy to define the relative energy between two functions $u$ and $\hu$, given by 
\begin{equation}
	\label{eqn:rel_energy_def}
	\hamil(\su,\hu) \coloneqq \hamil(\su) - \hamil(\hu) - \ip{\hamil'(\hu)}{\su-\hu},
\end{equation}
 The relative energy amounts to the difference of the energy of state $\su$ and the linear Taylor expansion of the energy around $\hu$. Under the uniform bounds from assumption \ref{ass:unif} and the convexity of the pressure potential \ref{ass:convexity}, the relative energy can be used to measure the distance between two functions. To make this more precise, we recall \cite[Lemma 2.5]{egger_ap_22} and \cite[Lemma 4.3]{egger_ap_22}:
\begin{lemma}[Norm equivalence]
	\label{lemma:norm_equiv}
	 Let $\su,\hu \in L^\infty(\xDom)^2$ satisfy the bounds from equations \eqref{eqn:pointwise_unif_bd}. Further, let Assumption \ref{ass:convexity} hold. Then , there exist $c_0,C_0>0$ with
	\begin{equation}
		c_0 ||\su - \hu||_2^2 \leq \hamil(\su,\hu) \leq C_0 ||\su - \hu||_2^2.
	\end{equation}
	The constants $c_0$ and $C_0$ depend only on the bounds in the assumptions.
\end{lemma}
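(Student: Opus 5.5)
The plan is to reduce the statement to a pointwise estimate on the integrand and then integrate over $\xDom$. Since $\hamil(\su)=\int_0^\ell \frac12\vel^2+\ppot(\dens)\dd{x}$ and $\hamil'(\hu)=(\ppot'(\hdens),\hvel)$, the relative energy \eqref{eqn:rel_energy_def} becomes
\begin{equation*}
\hamil(\su,\hu)=\int_0^\ell \frac12(\vel-\hvel)^2 + \bigl[\ppot(\dens)-\ppot(\hdens)-\ppot'(\hdens)(\dens-\hdens)\bigr]\dd{x}.
\end{equation*}
This holds because $\frac12\vel^2-\frac12\hvel^2-\hvel(\vel-\hvel)=\frac12(\vel-\hvel)^2$. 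So the velocity part is already exactly $\frac12\|\vel-\hvel\|_2^2$.

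For the density part, I use Taylor's formula with integral remainder at each point $x$:
\begin{equation*}
\ppot(\dens)-\ppot(\hdens)-\ppot'(\hdens)(\dens-\hdens)=\Bigl(\int_0^1(1-s)\,\ppot''\bigl(\hdens+s(\dens-\hdens)\bigr)\dd{s}\Bigr)(\dens-\hdens)^2 .
\end{equation*}
This is valid because $\ppot$ is smooth by \ref{ass:convexity}.

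The one point needing care is that the intermediate values $\hdens+s(\dens-\hdens)$ must lie where \ref{ass:convexity} applies. Both $\dens(x)$ and $\hdens(x)$ lie in $[\underbar{\dens},\overbar{\dens}]$ by \eqref{eqn:pointwise_unif_bd}. This interval is convex, so every convex combination also lies in it.

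On this interval the lower bound $\ppot''\ge \underline{C}_{\ppot''}$ holds by \ref{ass:convexity}. Moreover, $\ppot''$ is continuous on the compact interval, so it is bounded above by some $\overline{C}_{\ppot''}=\max_{[\underbar{\dens},\overbar{\dens}]}\ppot''<\infty$. Since $\int_0^1(1-s)\dd{s}=\frac12$, the density integrand therefore lies between $\frac12\underline{C}_{\ppot''}(\dens-\hdens)^2$ and $\frac12\overline{C}_{\ppot''}(\dens-\hdens)^2$ pointwise.

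Integrating over $\xDom$ and adding the velocity part gives the claim with
\begin{equation*}
c_0=\tfrac12\min\{1,\underline{C}_{\ppot''}\}, \qquad C_0=\tfrac12\max\{1,\overline{C}_{\ppot''}\}.
\end{equation*}
These constants depend only on $\underbar{\dens}$, $\overbar{\dens}$ and $\ppot$, as asserted.

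There is no real obstacle; the only subtle step is the convexity of the admissible density range used above. Note that only the density bounds in \eqref{eqn:pointwise_unif_bd} enter, since the velocity contribution is exactly quadratic. Integrability of all terms is immediate from $\su,\hu\in L^\infty(\xDom)^2$ and the boundedness of $\xDom$.
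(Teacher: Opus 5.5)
Your argument is correct for the functional literally printed in \eqref{eqn:energy}. However, that display is missing a factor $\dens$ in the kinetic term, so your proof does not cover the relative energy the lemma is about.

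The paper's energy is $\hamil(\dens,\vel)=\int_0^\ell\frac12\dens\vel^2+\ppot(\dens)\dd{x}$. This is forced by the sentence right after \eqref{eqn:energy}, $(\hEnthalp,\hflow)=(\delta_\dens\hamil(\hu),\delta_\vel\hamil(\hu))$ with $\hEnthalp=\frac12\hvel^2+\ppot'(\hdens)$ and $\hflow=\hdens\hvel$. It is also what the explicit formula in the proof of Lemma \ref{lemma:dth2}, the constant $\underline{\dens}/4$ in Lemma \ref{lemma:bounded}, and the identity $\hamil'(\su)=(\Enthalp,\flow)$ in the proof of Lemma \ref{lemma:rel_diss} rely on. Your first step, $\hamil'(\hu)=(\ppot'(\hdens),\hvel)$, contradicts this. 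For the actual energy one gets
\begin{equation*}
\hamil(\su,\hu)=\int_0^\ell \tfrac12\dens(\vel-\hvel)^2+(\dens-\hdens)\hvel(\vel-\hvel)+\bigl[\ppot(\dens)-\ppot(\hdens)-\ppot'(\hdens)(\dens-\hdens)\bigr]\dd{x}.
\end{equation*}
So the density and velocity errors are coupled through a sign-indefinite cross term. Your closing remark that only the density bounds enter is exactly what breaks. The map $(\dens,\vel)\mapsto\frac12\dens\vel^2+\ppot(\dens)$ has Hessian $\begin{pmatrix}\ppot''(\dens)&\vel\\ \vel&\dens\end{pmatrix}$, which is indefinite when $\vel^2>\dens\ppot''(\dens)$. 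Hence for supersonic states the relative energy can be negative, and the lower bound genuinely needs the subsonic condition \ref{ass:subsonic}. The statement lists only \ref{ass:convexity}, but the paper applies the lemma under \ref{ass:subsonic} (see Lemma \ref{lemma:mod_norm_eqiv}). The paper gives no proof of its own and imports the result from \cite{egger_ap_22}.

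The repair keeps your strategy but runs it on the full two-variable energy density. Taylor's formula with integral remainder gives
\begin{equation*}
\hamil(\su,\hu)=\int_0^\ell\int_0^1(1-s)\,(\su-\hu)^T\hamil''\bigl(\hu+s(\su-\hu)\bigr)(\su-\hu)\dd{s}\dd{x}.
\end{equation*}
Your convexity observation must now be made for the box $[\underline{\dens},\overline{\dens}]\times[-\tilde{\vel},\tilde{\vel}]$, so that every intermediate state $(\dens_w,\vel_w)$ obeys \eqref{eqn:pointwise_unif_bd}. By \ref{ass:subsonic}, $2|\vel_w|\le\sqrt{\dens_w\ppot''(\dens_w)}$. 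Hence $2|\vel_w z_1z_2|\le\frac12\bigl(\ppot''(\dens_w)z_1^2+\dens_w z_2^2\bigr)$ for every $z=(z_1,z_2)$, and so
\begin{equation*}
\tfrac12\bigl(\ppot''(\dens_w)z_1^2+\dens_wz_2^2\bigr)\le z^T\hamil''(\dens_w,\vel_w)z\le\tfrac32\bigl(\ppot''(\dens_w)z_1^2+\dens_wz_2^2\bigr).
\end{equation*}
Combine this with $\underline{C}_{\ppot''}\le\ppot''\le\overline{C}_{\ppot''}$ on $[\underline{\dens},\overline{\dens}]$ and $\int_0^1(1-s)\dd{s}=\frac12$. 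This yields the lemma with $c_0=\frac14\min\{\underline{C}_{\ppot''},\underline{\dens}\}$ and $C_0=\frac34\max\{\overline{C}_{\ppot''},\overline{\dens}\}$.
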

To control the norm $||\su-\hu||_2$, one can thus bound the relative energy. This is done using a Gronwall-inequality type argument, which requires estimates for the time derivative of the relative energy.

Now, let $\hu$ be a solution of system \eqref{eqn:masscons} -- \eqref{eqn:momcons} and let $\su $ be a solution to \eqref{eqn:obs_masscons} -- \eqref{eqn:obs_momcons}. Estimating the time derivative of the relative energy, one only obtains dissipation in the difference of velocities, i.e.
$$
\partial_t \hamil(\su,\hu) \leq -\frac 12 \mu \underbar{\dens} ||\vel - \hvel||_2^2 + C ||\su - \hu||_2^2
$$
for some constant $C$ independent of $\mu$, but depending on the velocity bounds $\overbar{v}$ and the time derivatives of the state $\hu$, c.f. Assumption \ref{ass:smallderivs}. Choosing these bounds small enough, $C$ can be made arbitrarily small. However, because there is no dissipation in $||\dens - \hdens||_2$, one cannot show exponential decay of the relative energy.

Instead, the idea is to find a modified relative energy for which dissipation in both variables can be shown. One ansatz for constructing such a modified relative energy is to find a functional $\contG$ and some small parameter $\delta\in\symbb{R}^+$ with the following properties:
\begin{enumerate}[leftmargin = 1.5cm,label=\textbf{(R\arabic*)},ref=\textbf{(R\arabic*)},start=1]
	\item For $\delta$ sufficiently small, one has the norm equivalence $$||\su - \hu||_2^2 \lesssim\hamil(\su,\hu) + \delta\contG(\su,\hu)\lesssim ||\su - \hu||_2^2.$$
	\label{req:norm_equiv}
	\item For any choice of $\mu$, there exist sufficiently small choices of $\delta$ and the bounds in Assumption \ref{ass:smallderivs} such that 
	\begin{equation}
		\partial_t \left(\hamil(\su,\hu) + \delta\contG(\su,\hu)\right) \leq -C\left(\hamil(\su,\hu) + \delta\contG(\su,\hu)\right),
	\end{equation}
	with some constant $C>0$ depending on the bounds in Assumptions \ref{ass:unif} -- \ref{ass:smallderivs}.
	\label{req:dissipative}
\end{enumerate}
Applying Gronwall's lemma then proves exponential-in-time convergence of the observer state towards the original system state as in Theorem \ref{theo:kunkel_conv}. In \cite{kunkel_obs}, a suitable functional $\contG$ is given, and the above properties are verified.

In this work we extend these arguments to the discrete observer \eqref{eqn:dis_scheme_1}--\eqref{eqn:dis_scheme_2}, to control the discrete error from the error splitting in \eqref{eqn:dis_error_split}.  For the discrete case, we cannot use the functional $\contG$ as in \cite{kunkel_obs}. We define a discrete functional $\disG$, inspired by $\contG$, instead. Then, we estimate the discrete time derivatives of $\hamil$ and $\disG$.

\subsection{Additional assumptions}
In addition to the assumptions \ref{ass:unif} -- \ref{ass:smallderivs}, we require an assumption on the iterates $\numu^n$ of the numerical scheme \eqref{eqn:dis_scheme_1} -- \eqref{eqn:dis_scheme_2}. To show the convergence of the observer system towards the original system state, the continuous analysis requires that the observer system satisfies similar uniform bounds as well. In the continuous case, provided the original system state is uniformly bounded and sufficiently regular, in \cite[Remark 4.1]{kunkel_obs}  it is shown that a solution of the observer system satisfying the assumptions exists.

Currently, we are unable to show that similar uniform bounds will hold for the iterates of the numerical scheme (independent of the discretization parameters $\xStep$ and $\tStep$), however, after computing a numerical solution of the observer system, it is straightforward to check the following assumption in an a-posteriori manner. 
\begin{enumerate}[leftmargin = 1.5cm,label=\textbf{(A1h)},ref=\textbf{(A1h)}]
	\item There exists a solution $(\numdens^n,\numflow^n)_{n \in \symbb{N}}$ of the discrete scheme \eqref{eqn:dis_scheme_1} -- \eqref{eqn:dis_scheme_2} such that for all $n \in \symbb{N}$ and $x \in \xDom$, we have 
	\begin{equation}
			0 < \underbar{\dens} \leq \numdens^n(x) \leq \overbar{\dens} \quad \text{and} \quad \abs{\numvel^n(x)} \leq \overbar{\vel},
	\end{equation}
	where $\numvel^n = \sfrac{\numflow^n}{\numdens^n}$. \label{ass:discrete}
\end{enumerate}

With these key assumptions, we are now ready to analyze the convergence of the discrete observer.
\subsection{Projection errors.}
Before we start estimating the discrete time derivative of the relative energy, we summarize some properties of the projection operators. 

We use the following lemma to estimate projection errors:
\begin{lemma}[Projection Errors, c.f. {\cite[Lemma 4.1]{egger_ap_22}} ]
	\label{lemma:proj_estimates}
	Let $z \in W^{1,p}(\xDom)$ with $1\leq p \leq \infty$. It holds that 
	$$
	\partial_x (I_\xStep z) = \Pi_\xStep(\partial_x z).
	$$
	Further, the projections preserve $||\cdot||_\infty$--bounds:
	$$||\Pi_\xStep z||_\infty \leq ||z||_\infty, \quad \text{and}\quad ||I_\xStep z||_\infty \leq ||z||_\infty.$$	
	Lastly, there exists a constant $c>0$ such that for any $1 \leq p \leq \infty$, the following estimates hold:
		$$||z - \Pi_\xStep z||_p \leq c\xStep||\partial_x z||_p, \quad \text{and}\quad ||z - I_\xStep z||_p \leq c \xStep||\partial_x z||_p.$$
	The constant $c$ is independent of $z$, $p$ and $\xStep$.
\end{lemma}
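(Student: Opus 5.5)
The plan is to argue cell by cell on $K_i=[x_i,x_{i+1}]$, using only that in one dimension every $z\in W^{1,p}(\xDom)$, $1\le p\le\infty$, is absolutely continuous. Then the nodal values $z(x_i)$ used by $I_\xStep$ are well defined, and $z(x)-z(y)=\int_y^x\partial_x z\,\dd{s}$ holds for all $x,y$. On each cell, $\Pi_\xStep z$ is the cell average $\bar z_i=\xStep^{-1}\int_{K_i}z\dd{x}$. The function $I_\xStep z$ is the affine function through $(x_i,z(x_i))$ and $(x_{i+1},z(x_{i+1}))$.

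\emph{Commuting property.} On $K_i$ the derivative of $I_\xStep z$ is the constant
\[
\frac{z(x_{i+1})-z(x_i)}{\xStep}=\frac1\xStep\int_{K_i}\partial_x z\dd{x},
\]
by the fundamental theorem of calculus for absolutely continuous functions. This is exactly $\Pi_\xStep(\partial_x z)$ restricted to $K_i$, which gives $\partial_x(I_\xStep z)=\Pi_\xStep(\partial_x z)$ almost everywhere.

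\emph{$L^\infty$-stability.} First, $|\bar z_i|\le \xStep^{-1}\int_{K_i}|z|\le\|z\|_\infty$. Second, on $K_i$ the interpolant $I_\xStep z(x)$ is a convex combination $\lambda z(x_i)+(1-\lambda)z(x_{i+1})$ with $\lambda\in[0,1]$. Its modulus is therefore bounded by $\max_j|z(x_j)|\le\|z\|_\infty$, using continuity of $z$ to pass from the essential supremum to point values.

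\emph{Approximation estimates.} For $x\in K_i$ I will write
\[
z(x)-\bar z_i=\frac1\xStep\int_{K_i}\bigl(z(x)-z(y)\bigr)\dd{y},
\]
which gives $|z(x)-\bar z_i|\le\int_{K_i}|\partial_x z|\dd{s}$. By Hölder's inequality, $\int_{K_i}|\partial_x z|\le\xStep^{1-1/p}\|\partial_x z\|_{L^p(K_i)}$. Raising to the $p$-th power and integrating over $K_i$ yields
\[
\|z-\Pi_\xStep z\|_{L^p(K_i)}\le\xStep\,\|\partial_x z\|_{L^p(K_i)}.
\]
Summing the $p$-th powers over $i$ gives the global bound with constant $1$; for $p=\infty$ one simply takes the maximum over cells.

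For the interpolant, set $e=z-I_\xStep z$. Then $e(x_i)=0$ at every node, and by the commuting property $\partial_x e=\partial_x z-\Pi_\xStep\partial_x z$ on $K_i$. Hence
\[
|e(x)|\le\int_{K_i}|\partial_x e|\le 2\int_{K_i}|\partial_x z|,
\]
using $\int_{K_i}|\Pi_\xStep\partial_x z|\le\int_{K_i}|\partial_x z|$. The same Hölder argument as above then gives $\|z-I_\xStep z\|_p\le 2\xStep\|\partial_x z\|_p$. Thus $c=2$ works for both estimates, independently of $z$, $p$ and $\xStep$.

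There is no real obstacle here. The only points needing care are the justification of point evaluation through absolute continuity of $W^{1,p}$ functions in one dimension, and tracking the powers of $\xStep$ in the Hölder step so that the constant comes out uniform in $p$, including the endpoint cases $p=1$ and $p=\infty$.
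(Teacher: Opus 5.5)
Your proof is correct and complete. The paper gives no proof of its own here: it only refers to \cite[Lemma 4.1]{egger_ap_22}, so your cell-by-cell argument is a self-contained substitute rather than a reconstruction of an argument in the text.

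Your route uses only the absolute continuity of $W^{1,p}$ functions in one dimension, and this buys two things:
\begin{itemize}
\item It gives an explicit constant ($c=2$, and $c=1$ for $\Pi_\xStep$), and the Hölder step makes it visible that this constant is uniform in $p$, including $p=1$ and $p=\infty$. That uniformity is exactly what the statement asserts.
\item It makes clear how the operators are to be read when $p=1$. There the paper defines $I_\xStep$ only on $H^1(\xDom)$ and $\Pi_\xStep$ only on $L^2(\xDom)$. However, $W^{1,1}\not\subset H^1$, and $\partial_x z$ need not lie in $L^2$. Your reading, nodal interpolation justified by continuity and $\Pi_\xStep$ as the cell-average operator on $L^1$, is the natural extension, and it is what your commuting-property computation actually uses.
\end{itemize}

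A small sharpening: on $K_i$, writing $\lambda=(x-x_i)/\xStep$, you have $e(x)=(1-\lambda)\bigl(z(x)-z(x_i)\bigr)+\lambda\bigl(z(x)-z(x_{i+1})\bigr)$. This shows $|e(x)|\le\int_{K_i}|\partial_x z|$ directly, so $c=1$ also works for the interpolant, without passing through the commuting property.
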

Using these projection error estimates and the positive lower bound on $\hdens$, one can prove the following estimates for $\numhu^n = (\numhdens^n, \numhvelNL{n})$:
\begin{lemma}
	\label{lemma:dt_proj_estimates}
	Let $\numhu^n$ be the projected original system state given by \eqref{def:proj_state}. Further, let Assumption \ref{ass:unif} hold. Let $* \in \{2,\infty\}$. Then, there exists a constant $C_*>0$, with
	\begin{equation}
		||\numhvelNL{n} - \hvel^n||_* \leq C_* \xStep,
		\label{eqn:numhvel_bound}
	\end{equation}
	with $C_*$  independent of $n$ and $\xStep$.
	Further, there exists a constant $\tilde C_{*}>0$, such that 
		\begin{equation}
		||\ddt(\numhvelNL{n}) - \partial_t\hvel^n||_* \leq \tilde C_* (\xStep + \tStep),
		\label{eqn:numhvel_ddt_bound}
	\end{equation}
	with $\tilde C_{*}$ independent of $n$, $\xStep$ and $\tStep$.
\end{lemma}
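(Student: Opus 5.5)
We prove the two estimates separately, each by splitting into a ratio-of-projections term and a pure projection error. Throughout we use two consequences of Lemma \ref{lemma:proj_estimates} and Assumption \ref{ass:unif}: the projections preserve the bounds \eqref{eqn:pointwise_unif_bd}, so $\underbar{\dens}\le\numhdens^n\le\overbar{\dens}$, and $|\numhflow^n|\le\overbar{\dens}\tilde\vel$ pointwise. Consequently $\numhvelNL{n}$ is well defined and uniformly bounded.

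\emph{Estimate \eqref{eqn:numhvel_bound}.} Since $\hvel^n=\hflow^n/\hdens^n$, we write
\begin{equation*}
\frac{\numhflow^n}{\numhdens^n}-\frac{\hflow^n}{\hdens^n}
=\frac{\numhflow^n-\hflow^n}{\numhdens^n}+\hflow^n\,\frac{\hdens^n-\numhdens^n}{\numhdens^n\hdens^n}.
\end{equation*}
Taking $\|\cdot\|_*$, we bound the denominators below by $\underbar{\dens}$ and $\hflow^n$ above by $\overbar{\dens}\tilde\vel$. The projection estimates of Lemma \ref{lemma:proj_estimates} then give a bound by $c\xStep(\|\partial_x\hflow^n\|_*+\|\partial_x\hdens^n\|_*)$. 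This is uniform in $n$ because $\hdens,\hvel\in W^{1,\infty}(\tDom;W^{1,\infty}(\xDom))$, so $\hflow=\hdens\hvel$ has uniformly bounded $W^{1,\infty}$ norm in space, hence also bounded $L^2$ norm on the bounded domain.

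\emph{Estimate \eqref{eqn:numhvel_ddt_bound}.} We use the triangle inequality
\begin{equation*}
\|\ddt(\numhvelNL{n})-\partial_t\hvel^n\|_*\le\|\ddt(\numhvelNL{n})-\ddt\hvel^n\|_*+\|\ddt\hvel^n-\partial_t\hvel^n\|_*.
\end{equation*}
For the second term, Taylor's theorem gives $\ddt\hvel^n-\partial_t\hvel^n=\frac1\tStep\int_{t^{n-1}}^{t^n}(\partial_t\hvel(s)-\partial_t\hvel(t^n))\,\dd{s}$. This is bounded by $\tStep\|\partial_{tt}\hvel\|_{L^\infty(L^*)}$. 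That requires $\partial_{tt}\hvel$ bounded in $L^\infty$ or $L^2$ in space, whereas \ref{ass:unif} only gives $W^{2,\infty}(W^{1,1})$. In one dimension $W^{1,1}\hookrightarrow L^\infty$, and on the bounded domain $L^\infty\subset L^2$, which suffices.

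For the first term, we use the discrete quotient rule $\ddt(a^n/b^n)=\ddt a^n/b^n - a^{n-1}\ddt b^n/(b^nb^{n-1})$. Applying it to both the projected pair $(\numhflow,\numhdens)$ and the exact pair $(\hflow,\hdens)$, we subtract the two expressions term by term. This produces differences of the following kinds:
\begin{itemize}
\item $\ddt\numhflow^n-\ddt\hflow^n=\ddt(I_\xStep\hflow-\hflow)^n$. Since the projections are linear and time independent, this equals $\frac1\tStep\int_{t^{n-1}}^{t^n}(I_\xStep-\mathrm{Id})\partial_t\hflow\,\dd{s}$, bounded by $c\xStep\|\partial_x\partial_t\hflow\|_{L^\infty(L^*)}$. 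This is finite since $\hdens,\hvel\in W^{1,\infty}(W^{1,\infty})$.
\item $\ddt\numhdens^n-\ddt\hdens^n$, which is treated identically.
\item Differences of the coefficients $1/b^n$, $a^{n-1}$ and $1/(b^nb^{n-1})$, each bounded by $C\xStep$ exactly as in the first part. They multiply discrete time derivatives $\ddt\hflow$, $\ddt\hdens$ (or their projections), which are bounded in $L^\infty$ by $\|\partial_t\cdot\|_{L^\infty}$ thanks to the $L^\infty$-stability of the projections.
\end{itemize}
Collecting these bounds gives $C(\xStep+\tStep)$ with a constant independent of $n$.

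The main obstacle is bookkeeping the regularity. Each constant must be expressed through time-uniform norms supplied by \ref{ass:unif}, in particular the mixed derivative $\partial_x\partial_t\hflow$ in $L^\infty$ and $\partial_{tt}\hvel$ via the embedding $W^{1,1}\subset L^\infty$. The lower bound $\underbar{\dens}$ on $\numhdens^n$ is what keeps the quotient estimates under control.
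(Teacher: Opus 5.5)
Your proof is correct. The first estimate is the paper's argument, differing only in which denominator you attach to each term. The second estimate takes a genuinely different intermediate quantity in the triangle inequality.

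The paper inserts $\partial_t(\numhvelNL{n})$. It bounds $\|\partial_t\hvel^n-\partial_t(\numhvelNL{n})\|_*\le C\xStep$ by the continuous chain rule together with projection errors for $\partial_t\hflow$ and $\partial_t\hdens$. It then applies Taylor's theorem in time to the \emph{projected} quotient $\numhflow/\numhdens$. You instead insert $\ddt\hvel^n$, handle the spatial part with the discrete quotient rule, and apply Taylor's theorem to the \emph{exact} velocity.

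Your ordering makes the time-consistency step need only $\partial_{tt}\hvel\in L^\infty(\tDom;W^{1,1}(\xDom))\hookrightarrow\besov{\infty}{\infty}$. That is exactly what \ref{ass:unif} supplies, and you correctly flag the one-dimensional embedding.

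The paper's ordering instead requires a bound on $\partial_{tt}(\numhflow/\numhdens)$. That involves second time derivatives of the projections, products of first time derivatives, and inverse powers of $\numhdens$. It is doable with the same $L^\infty$-stability and lower-bound arguments, but the paper records it only loosely as $\|\partial_{tt}(\sfrac{\hflow}{\hdens})\|_{\besov{\infty}{*}}$.

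The price of your route is bookkeeping. The discrete quotient rule produces coefficients at both $t^{n-1}$ and $t^n$, for example $1/(\numhdens^n\numhdens^{n-1})$ versus $1/(\hdens^n\hdens^{n-1})$. Each needs its own projection-error bound, whereas the paper's continuous chain rule keeps the spatial part more compact.
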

\begin{remark}
	\label{remark:dt_proj_estimates}
	Lemma \ref{lemma:dt_proj_estimates} means that the projected original system state satisfies both bounds from Assumption \ref{ass:smallderivs} approximately, in the following sense:
	$$
	||\numhvelNL{n}||_\infty \leq \overbar{\vel} + C_\infty \xStep \quad \text{and} \quad ||\ddt(\numhvelNL{n})||_\infty  \leq ||\partial_t \hvel||_{\besov{\infty}{\infty}} + \tilde C_\infty (\tStep + \xStep).
	$$
	Tracking $\xStep$- and $\tStep$- dependent bounds for  $||\numhvelNL{n}||_\infty $ and $||\ddt(\numhvelNL{n})||_\infty$ is cumbersome for the further estimates in this work. However, for all $0 < \tStep\leq \tStep_0$ and $0<\xStep \leq \xStep_0$,  with $\tStep_0$ and $\xStep_0$ small enough, the projected original system state $\numhu^n$ satisfies bounds independent of $\tStep$ and $\xStep$. These bounds can be chosen arbitrarily close to the bounds in \ref{ass:smallderivs}, and thus, we will use the same names for these constants.
\end{remark}

\begin{proof}[Proof of Lemma \ref{lemma:dt_proj_estimates}]
	We start by proving estimate \eqref{eqn:numhvel_bound}.
	 Using $\numhvelNL{n} - \hvel^n = \frac{1}{\hdens^n}(\numhflow^n - \hflow^n ) + {\frac{\numhflow^n}{\hdens^n \numhdens^n} (\hdens^n - \numhdens^n)}$ gives
	\begin{equation}
		||\numhvelNL{n} - \hvel ||_* \leq \frac{1}{\underbar{\dens}}||\hflow^n - \numhflow^n||_* + \frac{\overbar{\dens}\overbar{\vel}}{\underbar{\dens}^2}||\hdens^n - \numhdens^n||_* \leq c \xStep \left(\frac{1}{\underbar\dens}||\partial_x \hflow^n||_* + \frac{\overbar{\dens}\overbar{\vel}}{\underbar{\dens}^2}||\partial_x\hdens^n||_*\right),
	\end{equation}
	where the first inequality relies on the uniform bounds in \ref{ass:unif} being preserved by the projections, and the second inequality is just the projection error estimates.
	Finally, we have $||\partial_x\hdens^n||_* \leq ||\partial_x\hdens||_{\besov{\infty}{*}} $ and $||\partial_x\hflow^n||_* \leq ||\partial_x\hflow||_{\besov{\infty}{*}} $, such that the constant can be made independent of $n$.
	
	We move on to estimate \eqref{eqn:numhvel_ddt_bound}. Using the triangle inequality gives
	\begin{equation}
		\label{eqn:proof_numhvel_ddt_1}
	||\partial_t\hvel^n - \ddt(\numhvelNL{n})||_* \leq ||\partial_t\hvel^n - \partial_t(\numhvelNL{n})||_* + ||\partial_t(\numhvelNL{n}) - \ddt(\numhvelNL{n})||_*.
	\end{equation}
	We consider both contributions separately. Using the projection error estimates, and the chain rule, for the first term we have 
	\begin{equation}
		\begin{aligned}
			||\partial_t\hvel^n - \partial_t(\numhvelNL{n})||_* &\leq ||\partial_t(\sfrac{\hflow^n}{\hdens^n}) - \partial_t(\sfrac{\numhflow^n}{\hdens^n})||_* + ||\partial_t(\sfrac{\numhflow^n}{\hdens^n})- \partial_t(\numhvelNL{n})||_* \\
			&\leq \frac{1}{\underbar{\dens}}||\partial_t \hflow^n - \partial_t \numhflow^n||_* + \frac{1}{\underbar{\dens}^2}||\partial_t \hdens^n||_\infty||\hflow^n -\numhflow^n||_* + \frac{1}{\underbar{\dens}^2}||\partial_t \numhflow||_\infty||\hdens^n -\numhdens^n||_* \\
			&\quad + ||\numhflow^n||_\infty\left(\frac{1}{\underbar{\dens}^2}||\partial_t\hdens^n - \partial_t\numhdens^n||_* + \frac{\overbar{\dens}^2}{\underbar{\dens}^4}||\partial_t\hdens^n||_\infty||\hdens - \numhdens||_* \right)\leq C \xStep
		\end{aligned}
		\label{eqn:dthvel_proj_error}
	\end{equation}
	with some constant $C>0$ independent of $\xStep$ and $n$, using that $\hdens,\hflow \in W^{1,\infty}(\tDom;W^{1,\infty}(\xDom))$.
	For the second term in \eqref{eqn:proof_numhvel_ddt_1}, applying Taylor's theorem gives
	\begin{equation}
		\begin{aligned}
			||\partial_t(\numhvelNL{n}) - \ddt(\numhvelNL{n})||_* \leq \frac{\tStep}{2}||\partial_{tt}(\sfrac{\hflow}{\hdens})||_{\besov{\infty}{*}} \leq C \tStep,
		\end{aligned}
		\label{eqn:dthvel_taylor}
	\end{equation}
	with some $C>0$ independent of $\tStep$ and $n$, depending on the constants in assumption \ref{ass:unif}.
\end{proof}

\subsection{Time derivative of $\hamil(\numu,\numhu)$.}
Our estimates follow the strategy from \cite{egger_ap_22}, but modify some key parts. We repeat the computations in detail,  to keep the arguments self-contained. Further, we make some constants more explicit, because estimating too roughly spoils the dissipation needed to show the exponential decrease over time.

Estimates for discrete time derivatives will commonly require the use of Taylor's theorem. For the remainder terms, Taylor's theorem gives intermediate values, for which we introduce the following notation: 
\begin{definition}[Intermediate values]
	Let $x_i$ with $0 \leq i \leq M$ be the grid points in space. Let $f_1,f_2 \in L^2(\xDom)$ with $f_1|_{[x_i,x_{i+1}]},f_2|_{[x_i,x_{i+1}]} \in C([x_i,x_{i+1}])$ for all $i = 0, \dotsc, M-1$. We say a function $f: \symbb{R} \to \symbb{R}$ is an \textit{intermediate value}, denoted by $f \in [f_1,f_2]$, if for all $x \in \xDom$, $f(x)$ lies between $f_1(x)$ and $f_2(x)$.
\end{definition}

We begin by recalling the following result on the time derivative of the relative energy:
\begin{lemma}[Discrete time derivative of  the relative energy]
	\label{lemma:ddt_hamil}
	Let $\numhu^n$ be the projected original system state as defined in \eqref{def:proj_state} and let $\numu^n$ solve system \eqref{eqn:dis_scheme_1} -- \eqref{eqn:dis_scheme_2}. Further, let Assumptions \ref{ass:unif}, \ref{ass:convexity} and \ref{ass:discrete} hold. Then,
	\begin{equation}
		\begin{aligned}
		\ddt \hamil(\numu^n,\numhu^n) =& \ip{\hamil'(\numu^n) - \hamil'(\numhu^{n})}{\ddt(\numu^n - \numhu^n)} \\
		&  + \ip{\hamil'(\numu^n)-\hamil'(\numhu^n)-\hamil''(\numhu^n)(\numu^n-\numhu^n)}{\ddt \numhu^n}  \\ &+\ip{\hamil''(\numhu^n)(\numu^n-\numhu^n)}{\ddt\numhu^n}  -\ip{\ddt\hamil'(\numhu^n)}{\numu^n-\numhu^n} \\ 
			&+\frac 12 \tau \biggl(-\ip{\hamil''(\numu^*)\ddt\numu^n}{\ddt\numu^n} +\ip{\hamil''(\numhu^*)\ddt\numhu^n}{\ddt\numhu^n}  +2\ip{\ddt\hamil'(\numhu^n)}{\ddt(\numu^n-\numhu^n)}\biggr) 
		\\ 
		&\eqqcolon (I) + (II) + (III) + (IV),
	\end{aligned}
	\label{eqn:dtH_estimate}
	\end{equation}
	with  $\numu^* \in [\numu^{n-1},\numu^n]$ and $\numhu^* \in [\numhu^{n-1},\numhu^n]$.
	
	Terms that are analogous to $(I)$ and $(II)$, but with $\ddt$ replaced by  $\partial_t$, also appear in the continuous time derivative of $\hamil(\su,\hu)$. The terms $(III)$ and $(IV)$ are perturbations due to the discretization.
\end{lemma}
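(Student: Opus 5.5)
The identity is purely algebraic, apart from one Taylor expansion in time. I would prove it by expanding each of the three pieces of
$$\hamil(\numu^n,\numhu^n)=\hamil(\numu^n)-\hamil(\numhu^n)-\ip{\hamil'(\numhu^n)}{\numu^n-\numhu^n}$$
separately and then regrouping.

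First, check that everything is well defined. By \ref{ass:discrete}, $\numdens^n\in[\underbar{\dens},\overbar{\dens}]$. By Assumption \ref{ass:unif} and Lemma \ref{lemma:proj_estimates} (the projections preserve $L^\infty$-bounds, and $\Pi_\xStep$ preserves the lower bound since it averages), also $\numhdens^n\in[\underbar{\dens},\overbar{\dens}]$. Hence $\numhvelNL{n}$ is well defined and bounded. Since $\hamil(\dens,\vel)=\int_0^\ell e(\dens,\vel)\dd{x}$ with $e=\frac12\vel^2+\ppot(\dens)$, the energy is a local functional, and $\ppot$ is smooth on $[\underbar{\dens},\overbar{\dens}]$ by \ref{ass:convexity}. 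Therefore $\hamil'$ and $\hamil''$ act pointwise, as multiplication by $\nabla e$ and $\nabla^2 e$.

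Second, for the two energy terms I would apply Taylor's theorem pointwise in $x$. Expanding $e(\numu^{n-1}(x))$ around $\numu^n(x)$ with Lagrange remainder and dividing by $\tStep$ gives
$$\ddt\hamil(\numu^n)=\ip{\hamil'(\numu^n)}{\ddt\numu^n}-\tfrac{\tStep}{2}\ip{\hamil''(\numu^*)\ddt\numu^n}{\ddt\numu^n},$$
and the same formula holds for $\numhu^n$ with $\numhu^*$. Because $\nabla^2e=\mathrm{diag}(\ppot''(\dens),1)$, only the density component needs an intermediate value. One can take the intermediate density $\numdens^*$ from the mean value theorem for $\ppot$ on each cell. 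There $\numdens^{n-1},\numdens^n$ are constant and $\numhdens^{n-1},\numhdens^n$ are constant as well, so the choice is piecewise constant. This satisfies the continuity requirement in the definition of intermediate values, and the intermediate values also stay in $[\underbar{\dens},\overbar{\dens}]$.

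Third, for the linear term I would use the discrete product rule
$$\ddt\ip{a^n}{b^n}=\ip{\ddt a^n}{b^n}+\ip{a^n}{\ddt b^n}-\tStep\ip{\ddt a^n}{\ddt b^n},$$
which one verifies directly from $a^{n-1}=a^n-\tStep\ddt a^n$. It is applied with $a^n=\hamil'(\numhu^n)$ and $b^n=\numu^n-\numhu^n$. Collecting terms, the first-order parts combine to
$$\ip{\hamil'(\numu^n)}{\ddt\numu^n}-\ip{\hamil'(\numhu^n)}{\ddt\numhu^n}-\ip{\hamil'(\numhu^n)}{\ddt(\numu^n-\numhu^n)}=\ip{\hamil'(\numu^n)-\hamil'(\numhu^n)}{\ddt\numu^n}.$$
Writing $\ddt\numu^n=\ddt(\numu^n-\numhu^n)+\ddt\numhu^n$ produces $(I)$ plus $\ip{\hamil'(\numu^n)-\hamil'(\numhu^n)}{\ddt\numhu^n}$. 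Adding and subtracting $\hamil''(\numhu^n)(\numu^n-\numhu^n)$ in the latter gives $(II)$ and the first part of $(III)$. The term $-\ip{\ddt\hamil'(\numhu^n)}{\numu^n-\numhu^n}$ from the product rule completes $(III)$. The three $\tStep$-order contributions are $-\frac\tStep2\ip{\hamil''(\numu^*)\cdot}{\cdot}$, $+\frac\tStep2\ip{\hamil''(\numhu^*)\cdot}{\cdot}$ and $+\tStep\ip{\ddt\hamil'(\numhu^n)}{\ddt(\numu^n-\numhu^n)}$, and together they form exactly $(IV)$.

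I do not expect a real obstacle. The only delicate point is the sign bookkeeping in the discrete product rule, namely the use of $a^{n-1}$ versus $a^n$ that creates the extra $\tStep$-term. Beyond that, one must make sure the intermediate values are admissible, that is, piecewise continuous and within the uniform bounds, so that $\hamil''$ is evaluated where \ref{ass:convexity} applies. The piecewise-constant choice described above handles this.
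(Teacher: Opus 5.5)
Your derivation is correct, and the signs and regrouping check out term by term. The paper gives no argument of its own here; it only points to the proof of Lemma 4.4 in \cite{egger_ap_22}. The form of $(IV)$ shows that your computation is the one meant: two Lagrange remainders from Taylor-expanding $\hamil(\numu^n)$ and $\hamil(\numhu^n)$, plus the $\tStep$-correction of the discrete product rule applied to $\ip{\hamil'(\numhu^n)}{\numu^n-\numhu^n}$. As you implicitly note, the identity uses nothing from the scheme itself.

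One side remark of yours should be corrected. The display \eqref{eqn:energy} is a misprint. The energy used everywhere else in the paper is $\hamil(\dens,\vel)=\int_0^\ell \frac12\dens\vel^2+\ppot(\dens)\,\mathrm{d}x$. This is what makes $(\hEnthalp,\hflow)=(\delta_\dens\hamil,\delta_\vel\hamil)$. It is also what produces the second component $(\numdens^n-\numhdens^n)(\numvel^n-\numhvelNL{n})$ in the proof of Lemma \ref{lemma:dth2}; that component would vanish for $\frac12\vel^2+\ppot(\dens)$.

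For the correct energy the Hessian is $\begin{pmatrix}\ppot''(\dens)&\vel\\ \vel&\dens\end{pmatrix}$, which is not diagonal. So it is false that only the density needs an intermediate value. Moreover, $\numvel^n=\numflow^n/\numdens^n$ is linear (not constant) on each cell, so $\numu^*$ cannot in general be chosen piecewise constant.

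Nothing in the proof breaks. For each $x$, apply the scalar Taylor formula with Lagrange remainder to $s\mapsto e\bigl(\numu^n(x)+s(\numu^{n-1}(x)-\numu^n(x))\bigr)$. This gives $\numu^*(x)$ on the segment between $\numu^{n-1}(x)$ and $\numu^n(x)$, hence between them in each component, and likewise for $\numhu^*$. That is all the definition of intermediate value requires; its continuity hypothesis concerns the endpoint functions, not the intermediate value itself. The densities also stay in $[\underline{\rho},\overline{\rho}]$. The rest of your argument never uses the specific form of $e$.
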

\begin{proof}This result is shown in the proof of \cite[Lemma 4.4]{egger_ap_22}, as an intermediate step.\end{proof}

Next, we estimate the terms of the previous lemma individually. We begin by estimating the contributions from the discrete perturbation terms.
 For future convenience, we first recall the estimate from \cite[Lemma 4.3]{egger_ap_22}, as it will be commonly used in the following computations.
\begin{lemma}
	\label{lemma:bounded}
	Let assumptions \ref{ass:unif} -- \ref{ass:convexity} hold. Then, there exists a constant $C>0$ such that for any $x \in L^\infty(\xDom)$ and $y \in L^2(\xDom)$, one has 
	\begin{equation}
		\ip{(\hamil''(\su) - \hamil''(\hu)) x}{y} \leq C||\su - \hu||_2 ||x||_\infty ||y||_2.
	\end{equation}
	The constant C depends only on the bounds from the assumptions.
	
	Further, $\hamil''(\hu)$ is uniformly convex for any $\hu\in\symbb{R}^2$ satisfying the bounds from Assumption \ref{ass:unif}. More precisely, we have 
	\begin{equation}
		z^T \hamil''(\hu) z \geq \underline {C} z^2, \quad \forall z\in \symbb{R}^2,
	\end{equation}
	with $\underline{C} = \min(\frac{\underline C_{\ppot''}}{4},\frac{\underline\dens}{4})$.
\end{lemma}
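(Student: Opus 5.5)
The plan is to work pointwise. $\hamil$ is an integral functional, so $\hamil''(\su)$ acts as multiplication by the $2\times 2$ Hessian of the energy density, evaluated at $\su(x)$. I take the variables to be the conservative ones $(\dens,\flow)$ used for comparison in the scheme. The energy density is then $e(\dens,\flow)=\frac{\flow^2}{2\dens}+\ppot(\dens)$, with
\[
\nabla^2 e(\dens,\flow)=\begin{pmatrix} \frac{\flow^2}{\dens^3}+\ppot''(\dens) & -\frac{\flow}{\dens^2}\\ -\frac{\flow}{\dens^2} & \frac{1}{\dens}\end{pmatrix}.
\]
Both claims then reduce to elementary statements about this matrix-valued function on the admissible set
\[
K=\{(\dens,\flow):\underbar{\dens}\le\dens\le\overbar{\dens},\ \abs{\flow}\le\overbar{\dens}\tilde\vel\},
\]
which is compact and convex. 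If the paper's convention is instead the variables $(\dens,\vel)$, the Hessian is $\mathrm{diag}(\ppot'',1)$ and both claims are immediate. The argument below covers the harder conservative case.

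\textbf{Lipschitz estimate.} By \ref{ass:convexity}, $\ppot$ is smooth, and $\dens\ge\underbar{\dens}>0$ on $K$, so every entry of $\nabla^2 e$ is $C^1$ on a neighbourhood of $K$. Its derivative is bounded on $K$ by a constant $L$ that depends only on $\underbar{\dens},\overbar{\dens},\tilde\vel$ and $\sup_K\abs{\ppot'''}$. Since $K$ is convex and $\su(x),\hu(x)\in K$, the segment joining them stays in $K$. The mean value theorem therefore gives, for a.e.\ $x$,
\[
\abs{\nabla^2e(\su(x))-\nabla^2e(\hu(x))}\le L\abs{\su(x)-\hu(x)}.
\]
Integrating and applying Hölder's inequality with exponents $(2,\infty,2)$ then yields
\[
\ip{(\hamil''(\su)-\hamil''(\hu))x}{y}\le L\int_\xDom\abs{\su-\hu}\,\abs{x}\,\abs{y}\dd{x}\le L\,||\su-\hu||_2\,||x||_\infty\,||y||_2,
\]
so the first claim holds with $C=L$.

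\textbf{Uniform convexity.} For $z=(a,b)$ and $\vel=\flow/\dens$, completing the square gives
\[
z^T\nabla^2 e\,z=\ppot''(\dens)a^2+\frac{(b-\vel a)^2}{\dens}.
\]
Using $(b-\vel a)^2\ge \frac12 b^2-\vel^2a^2$ together with the subsonic condition \ref{ass:subsonic}, $\vel^2\le\tilde\vel^2\le\frac14\dens\ppot''(\dens)$, the negative term $-\vel^2a^2/\dens$ costs at most $\frac14\ppot''a^2$. This leaves
\[
z^T\nabla^2e\,z\ge\frac34\ppot''a^2+\frac{b^2}{2\dens}\ge \frac{\underline C_{\ppot''}}{4}a^2+\frac{1}{2\overbar{\dens}}b^2 .
\]
This gives a constant of the stated form. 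I expect the main obstacle to be matching exactly the advertised constant $\underline C=\min(\frac{\underline C_{\ppot''}}{4},\frac{\underbar{\dens}}{4})$. That requires tracking which variables $\hamil''$ refers to, and adjusting the split in the Young-type inequality by distributing the factor $\frac14$ appropriately. The structural argument, a complete square plus subsonicity absorbing the cross term, is the essential step and is unaffected by this bookkeeping.
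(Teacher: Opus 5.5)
Your mean-value/Hölder argument for the first claim is sound. The convexity half, however, is carried out for the wrong Hessian, and the mismatch with the advertised constant cannot be fixed by bookkeeping.

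In this paper the state is $u=(\dens,\vel)$; this is how $\numu$, $\numhu$ and $\|u-\hat u\|_2$ are defined. The energy density is $\frac12\dens\vel^2+\ppot(\dens)$. The display \eqref{eqn:energy} drops a factor $\dens$, but two things force the correct density: the identification $(\hEnthalp,\hflow)=(\delta_\dens\hamil,\delta_\vel\hamil)$, and the explicit formula in the proof of Lemma~\ref{lemma:dth2}. So $\hamil''(u)$ is pointwise multiplication by
\[
\begin{pmatrix}\ppot''(\dens) & \vel\\ \vel & \dens\end{pmatrix}.
\]
This is neither your conservative Hessian nor $\mathrm{diag}(\ppot'',1)$.

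In your $(\dens,\flow)$ coordinates the stated constant is actually false. Take $\flow\equiv 0$, $\dens\equiv\underline\dens=3$, $\ppot''\equiv 10$ and $z=(0,1)$. Then $z^T\nabla^2e\,z=1/\dens=1/3$, while $\min(\underline C_{\ppot''}/4,\underline\dens/4)=3/4$. The second direction is controlled by $1/\dens$, and no redistribution of factors in Young's inequality turns that into $\underline\dens/4$. The $\mathrm{diag}(\ppot'',1)$ reading is not ``immediate'' either: it fails when $\underline\dens>4$ and $\underline C_{\ppot''}>4$, since then the claimed constant exceeds $1$.

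With the correct Hessian the proof is short. By Young's inequality and \ref{ass:subsonic}, i.e.\ $4\vel^2\le\dens\ppot''(\dens)$,
\[
2\abs{\vel}\abs{a}\abs{b}\le\tfrac12\ppot''a^2+\tfrac{2\vel^2}{\ppot''}b^2\le\tfrac12\ppot''a^2+\tfrac12\dens b^2,
\qquad\text{so}\qquad
z^T\hamil''z\ge\tfrac12\bigl(\ppot''a^2+\dens b^2\bigr)\ge\underline C\abs{z}^2 .
\]
Here \ref{ass:subsonic} is indispensable, since $\det\hamil''=\dens\ppot''-\vel^2$. The statement only lists \ref{ass:unif}--\ref{ass:convexity}, whereas your conservative Hessian would be positive definite without any subsonicity.

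Your first-claim argument transfers verbatim. The entries of this matrix are $\ppot''(\dens)$ and linear functions of $\dens,\vel$, so the Lipschitz constant depends only on $\sup_{[\underline\dens,\overline\dens]}\abs{\ppot'''}$. It must, however, be stated for $\hamil''$ in the $(\dens,\vel)$ variables. That is the operator applied to $\ddt\numu^n=\ddt(\numdens^n,\numvel^n)$ in Lemmas~\ref{lemma:ddt_hamil} and~\ref{lemma:dthp2}. The paper itself gives no proof and only recalls the result from the cited work of Egger et al.
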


\begin{lemma}[Estimate of (IV) from Lemma \ref{lemma:ddt_hamil}]
	\label{lemma:dthp2}	
	Under the assumptions of Lemma \ref{lemma:ddt_hamil}, there exist $\tStep_0,\xStep_0\in \symbb{R}$ such that for all $0 < \tStep \leq \tStep_0$ and $0 < \xStep \leq \xStep_0$, there exist $C_1,C_2,C_3 > 0$, such that, for any $\alpha \in \symbb{R}^+$ it holds that 
	\begin{equation}
			\begin{aligned}
			\frac 12 \tStep &\biggl(-\ip{\hamil''(\numu^*)\ddt\numu^n}{\ddt\numu^n} +\ip{\hamil''(\numhu^*)\ddt\numhu^n}{\ddt\numhu^n}  +2\ip{\ddt\hamil'(\numhu^n)}{\ddt(\numu^n-\numhu^n)}\biggr) \\
			&\leq {\alpha} \left(||\numhu^{n-1}-\numu^{n-1}||_2^2 + ||\numhu^{n}-\numu^{n}||_2^2\right) + C_1 \tStep \left(||\numhu^{n-1}-\numu^{n-1}||_2^2 + ||\numhu^{n}-\numu^{n}||_2^2\right) \\
			&\quad + \frac{C_2}{\alpha} \tStep^2  + C_3 \tStep^2- \frac 12\tStep\underline{C}||\ddt(\numu^n-\numhu^n)||_2^2.
		\end{aligned}
		\label{eqn:numdiss}
	\end{equation}

	The constants $C_1,C_2$ and $C_3$ only depend on the bounds in assumptions \ref{ass:unif} and \ref{ass:convexity} and are independent of $n$, $\xStep$ and $\tStep$.
\end{lemma}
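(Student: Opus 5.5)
The idea is to decompose the quadratic forms so that the negative term $-\frac12\tStep\ip{\hamil''(\numu^*)\ddt\numu^n}{\ddt\numu^n}$ absorbs most of the positive one, leaving the numerical dissipation $-\frac12\tStep\underline C||\ddt(\numu^n-\numhu^n)||_2^2$ plus perturbations. Write $e^n \coloneqq \numu^n-\numhu^n$ and $\ddt\numu^n = \ddt e^n + \ddt\numhu^n$. Expanding the first term gives
\begin{equation*}
-\ip{\hamil''(\numu^*)\ddt e^n}{\ddt e^n} - 2\ip{\hamil''(\numu^*)\ddt\numhu^n}{\ddt e^n} - \ip{\hamil''(\numu^*)\ddt\numhu^n}{\ddt\numhu^n}.
\end{equation*}
By the uniform convexity in Lemma \ref{lemma:bounded}, which applies since $\numu^*$ lies between iterates satisfying \ref{ass:discrete}, the first piece is at most $-\underline C||\ddt e^n||_2^2$. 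Of this, half is kept as the dissipation in \eqref{eqn:numdiss}, and the other half is reserved to absorb cross terms.

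Next I would combine $\ip{\hamil''(\numhu^*)\ddt\numhu^n}{\ddt\numhu^n} - \ip{\hamil''(\numu^*)\ddt\numhu^n}{\ddt\numhu^n}$ and apply Lemma \ref{lemma:bounded}. This is bounded by $C||\numu^*-\numhu^*||_2\,||\ddt\numhu^n||_\infty||\ddt\numhu^n||_2$. By Remark \ref{remark:dt_proj_estimates}, the regularity in \ref{ass:unif}, and $\tStep\le\tStep_0$, $\xStep\le\xStep_0$, we have $||\ddt\numhu^n||_\infty\le C$. Since $\numu^*$ and $\numhu^*$ are pointwise intermediate values, $|\numu^*-\numhu^*|$ is bounded by $|e^{n-1}|+|e^n|+\tStep|\ddt\numhu^n|$. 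This needs some care: the intermediate points need not correspond, so I would bound $|\numu^*-\numhu^*|\le |e^{n}|+\tStep|\ddt\numu^n|+\tStep|\ddt \numhu^n|$ and then $\tStep|\ddt\numu^n|\le\tStep|\ddt e^n|+\tStep|\ddt\numhu^n|$. Multiplied by the prefactor $\frac12\tStep$, Young's inequality with weight $\alpha$ gives $\alpha||e||_2^2+\frac{C}{\alpha}\tStep^2$. The remaining pieces are of order $\tStep^2||\ddt e^n||_2$ (absorbed into the reserved dissipation up to an $O(\tStep^3)\le C_3\tStep^2$ remainder) and $\tStep^2$ contributions.

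For the cross terms I would group $2\ip{\ddt\hamil'(\numhu^n)}{\ddt e^n}-2\ip{\hamil''(\numu^*)\ddt\numhu^n}{\ddt e^n}$. By Taylor's theorem, $\ddt\hamil'(\numhu^n)=\hamil''(\numhu^{**})\ddt\numhu^n$ with $\numhu^{**}\in[\numhu^{n-1},\numhu^n]$. The difference is then $2\ip{(\hamil''(\numhu^{**})-\hamil''(\numu^*))\ddt\numhu^n}{\ddt e^n}$, which Lemma \ref{lemma:bounded} bounds by $C||\numu^*-\numhu^{**}||_2||\ddt e^n||_2$. I bound $||\numu^*-\numhu^{**}||_2$ by $||e^{n-1}||_2+||e^n||_2+\tStep||\ddt e^n||_2+C\tStep$ as above.

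With the prefactor $\frac12\tStep$, Young's inequality splits this into $\frac14\underline C\tStep||\ddt e^n||_2^2$ plus $C_1\tStep(||e^{n-1}||_2^2+||e^n||_2^2)+C_3\tStep^3$. The term $\tStep^2||\ddt e^n||_2^2$ is absorbed once $\tStep_0$ is small relative to $\underline C$.

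The main obstacle is this cross term. A careless estimate such as $||\ddt e^n||_2\le \tStep^{-1}(||e^n||_2+||e^{n-1}||_2)$ would produce $O(1)$ constants in front of the error instead of $\alpha$ or $C_1\tStep$. So the difference of Hessians at mismatched intermediate points must be handled exactly as above, and $\tStep_0$ must be tuned so that all absorptions into $-\underline C\tStep||\ddt e^n||_2^2$ leave exactly the stated $-\frac12\underline C\tStep||\ddt e^n||_2^2$. Throughout, the constants come only from \ref{ass:unif}, \ref{ass:convexity} and Lemma \ref{lemma:dt_proj_estimates}, and are therefore independent of $n$, $\xStep$ and $\tStep$.
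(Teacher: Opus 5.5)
Your proposal is correct and is essentially the paper's proof. It uses the same splitting of (IV) into three parts: the convex term $-\frac12\tStep\ip{\hamil''(\numu^*)\ddt(\numu^n-\numhu^n)}{\ddt(\numu^n-\numhu^n)}$; the Hessian difference tested twice with $\ddt\numhu^n$, handled by Lemma~\ref{lemma:bounded} and Young with $\alpha$; and the cross term, rewritten via Taylor with $\numhu^{**}$ and absorbed into the dissipation by Young with a weight proportional to $\underline{C}$. Your only deviation, routing $\|\numu^*-\numhu^*\|_2$ through $\tStep\|\ddt\numu^n\|_2$, also works but is unnecessary: the bound $\|\numu^*-\numhu^*\|_2\le\|\numu^{n-1}-\numhu^{n-1}\|_2+\|\numu^n-\numhu^n\|_2+\tStep\|\ddt\numhu^n\|_2$ that you wrote first (and the paper uses) already holds pointwise for non-matching intermediate points, as one sees by checking the four corner combinations.
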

\begin{proof}
	We begin by rearranging $(IV)$ to obtain a numerical dissipation term. By inserting zeros suitably, we get
	\begin{equation}
		\begin{aligned}
			&\frac 12 \tStep \biggl(-\ip{\hamil''(\numu^*)\ddt\numu^n}{\ddt\numu^n} +\ip{\hamil''(\numhu^*)\ddt\numhu^n}{\ddt\numhu^n}  +2\ip{\ddt\hamil'(\numhu^n)}{\ddt(\numu^n-\numhu^n)}\biggr)\\
			&\quad =  -\frac 12\tStep \ip{\hamil''(\numu^*)\ddt(\numu^n-\numhu^n)}{\ddt(\numu^n-\numhu^n)} \\
			&\quad\quad +\frac 12\tStep \left(\ip{\hamil''(\numhu^*)\ddt\numhu^n}{\ddt\numhu^n} - \ip{\hamil''(\numu^*)\ddt\numhu^n}{\ddt\numhu^n}\right)\\
			&\quad\quad+	\frac 12\tStep\left(2\ip{\ddt\hamil'(\numhu^n)}{\ddt(\numu^n-\numhu^n)} - 2\ip{\hamil''(\numu^*)\ddt\left(\numu^n-\numhu^n \right)}{\ddt\numhu^n}\right)\\
			&\eqqcolon (A) + (B) + (C).
		\end{aligned}
		\label{eqn:numdiss_split_terms}
	\end{equation}
	We estimate $(A)$ by using the uniform convexity of the energy (c.f. Lemma \ref{lemma:bounded}):
	\begin{equation}
		 (A) = -\frac 12\tStep \ip{\hamil''(\numu^*)\ddt(\numu^n-\numhu^n)}{\ddt(\numu^n-\numhu^n)} \leq- \tStep\underline{C}||\ddt(\numu^n-\numhu^n)||_2^2.
		 \label{eqn:num_diss_est}
	\end{equation}
	This is the desired dissipation term. We will use it to absorb the contribution of $(C)$.
	
	Then, to estimate $(B)$, we use Lemma \ref{lemma:bounded}, giving 
	\begin{align}
		\label{eq:lem6_est_b}
		(B) = 	\frac 12 \tStep \ip{(\hamil''(\numhu^*)-\hamil''(\numu^*))\ddt\numhu^n}{\ddt\numhu^n} \leq \frac 12 C\tStep||\numhu^*-\numu^*||_2||\ddt\numhu^n||_2||\ddt\numhu^n||_\infty.
	\end{align}
	Because $\numhu^*\in[\numhu^{n-1},\numhu^n]$ and $\numu^* \in [\numu^{n-1},\numu^n]$ are intermediate values, it holds that 
	\begin{equation}
		||\numhu^*-\numu^*||_2 \leq ||\numhu^{n-1}-\numu^{n-1}||_2 + ||\numhu^n-\numhu^{n-1}||_2 + ||\numhu^n-\numu^n||_2.
		\label{eqn:intermediate_value}
	\end{equation}
	Further, $||\numhu^n-\numhu^{n-1}||_2 = \tStep||\ddt \numhu^n||_2$ and from the definition of the projection operators, assumption \ref{ass:unif} and Lemma \ref{lemma:dt_proj_estimates} we have $||\ddt \numhu^n||_2 \leq ||\partial_t \hu||_{\besov{\infty}{2}} + \tilde C(\tStep + \xStep) \leq C$, for  all $0<\tStep \leq \tStep_0$ and $0<\xStep \leq \xStep_0$ sufficiently small. The constant $C$ is independent of $\tStep$ and $\xStep$. Similarly, we have $||\ddt \numhu^n||_\infty \leq C$ with $C$ independent of $\tStep$ and $\xStep$.
	Then, applying weighted Young's inequality with $\alpha \in \symbb{R}^+$ gives
	\begin{align}
		\frac 12 C\tStep||\numhu^*-\numu^*||_2||\ddt\numhu^n||_2||\ddt\numhu^n||_\infty \leq& \,{\alpha} \left(||\numhu^{n-1}-\numu^{n-1}||_2^2 + ||\numhu^{n}-\numu^{n}||_2^2\right) + \frac{1}{\alpha} \tStep^2 C,
	\end{align}
	with $C$ some constant depending on the bounds from assumption \ref{ass:unif} and \ref{ass:convexity}, which is independent of $\tStep$ and $\xStep$.
	
	Lastly, to estimate $(C)$, we use Taylor's theorem to expand $$\ip{\ddt\hamil'(\numhu^n)}{\ddt(\numu^n-\numhu^n)} =\ip{\hamil''(\numhu^{**})\ddt\numhu^n}{\ddt(\numu^n-\numhu^n)} ,$$
	with $\numhu^{**}\in [\numhu^{n-1},\numhu^n]$ some intermediate value.
	Thus, using the symmetry of $\hamil''$ and applying Lemma \ref{lemma:bounded}, we have 
	\begin{equation}
	(C) = 	-\tStep\ip{(\hamil''(\numu^*)-\hamil''(\numhu^{**}))\ddt\numhu^n}{\ddt(\numu^n-\numhu^n)} \leq C\tStep||\numu^*-\numhu^{**}||_2||\ddt(\numu^n-\numhu^n)||_2||\ddt\numhu^n||_\infty.
	\end{equation}
	We must pay special attention to $||\ddt(\numu^n-\numhu^n)||_2$, because our assumptions contain no uniform bounds for terms of this type.
	 By applying weighted Young's inequality with $\beta \in \symbb{R}^+$, we get
	\begin{equation}
		\label{eqn:proof_est_IV_1}
		C\tStep||\numu^*-\numhu^{**}||_2||\ddt(\numu^n-\numhu^n)||_2||\ddt\numhu^n||_\infty\leq \tStep\beta||\ddt(\numu^n-\numhu^n)||_2^2 + \frac{1}{\beta} C^2\tStep||\numu^*-\numhu^{**}||_2^2||\ddt\numhu^n||_\infty^2.
	\end{equation}
	We set $\beta =  \sfrac{\underline{C}}{2}$. Then, the first term of the above estimate can be absorbed into \eqref{eqn:num_diss_est}.
	To bound the second term of \eqref{eqn:proof_est_IV_1}, we split $||\numu^* - \numhu^{**}||_2$ using \eqref{eqn:intermediate_value}. This gives
	\begin{equation}
		\begin{aligned}
		\frac{2}{\underline{C}} C^2\tStep||\numu^*-\numhu^{**}||_2^2||\ddt\numhu^n||_\infty^2 &\leq \frac{6}{\underline{C}} C^2\tStep \left(||\numhu^{n-1}-\numu^{n-1}||_2^2 + ||\numhu^{n}-\numu^{n}||_2^2 + \tStep^2||\ddt\numhu^n||_2^2\right)||\ddt\numhu^n||_\infty^2
		\\ &\leq C\tStep(||\numhu^{n-1}-\numu^{n-1}||_2^2 + ||\numhu^{n}-\numu^{n}||_2^2) + C \tStep^2,
		\end{aligned}		 
	\end{equation}
	with $C$ some constant independent of $\tStep$, $\xStep$ and $T$. We have absorbed the $\tStep^3$-term into the $\tStep^2$ terms, again, for $0<\tStep \leq \tStep_0$ sufficiently small.
	Thus, overall, we get the estimate 
	\begin{equation}
		(A) + (C) \leq - \frac 12\tStep\underline{C}||\ddt(\numu^n-\numhu^n)||_2^2 + C\tStep(||\numhu^{n-1}-\numu^{n-1}||_2^2 + ||\numhu^{n}-\numu^{n}||_2^2) + C \tStep^2.
	\end{equation}
	Combining this estimate with equation \eqref{eq:lem6_est_b} and renaming constants gives the statement of the lemma.
\end{proof}
\begin{remark}
	The previous lemma means that the perturbation term (IV) from Lemma \ref{lemma:ddt_hamil} gives three types of contributions: 
	\begin{itemize}
		\item contributions that are proportional to $||\numu - \numhu||_2^2$, but can be made arbitrarily small by choosing $\alpha$ small enough,
		\item contributions that are proportional to $||\numu - \numhu||_2^2$ and  $\tStep$, and
		\item contributions at order $\symcal{O}(\tStep^2)$.
	\end{itemize}
	The first type of contributions does not spoil the constants one would get when estimating the time derivative of the relative energy in the continuous case, and for sufficiently small $\tau$ the second type of contributions will not spoil the exponential decay of the relative energy. Finally, the last type contributes to the base level of error the observer can achieve. Choosing small $\alpha$ gives worse constants on these terms.
	  We will show similar results for the remaining terms of the discrete time derivative $\ddt\hamil(\numu^n,\numhu^n)$, too.
\end{remark}

\begin{lemma}[Estimate of (III) from Lemma \ref{lemma:ddt_hamil}]
	Under the assumptions of Lemma \ref{lemma:ddt_hamil}, there exist $\tStep_0,\xStep_0\in \symbb{R}$ such that for all $0 < \tStep \leq \tStep_0$ and $0 < \xStep \leq \xStep_0$, there exists a $C> 0$ independent of $n$, $\tStep$ and $\xStep$  such that, for any $\alpha \in \symbb{R}^+$ it holds that 
	\begin{equation}
	\ip{\hamil''(\numhu^n)(\numu^n-\numhu^n)}{\ddt\numhu^n}  -\ip{\ddt\hamil'(\numhu^n)}{\numu^n-\numhu^n} \leq \alpha ||\numu^n - \numhu^n||_2^2 + \frac{1}{4\alpha} C \tau^2
	\end{equation}
\end{lemma}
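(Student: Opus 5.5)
The plan is to recognize term (III) as a Taylor remainder: it is the difference between a time derivative and its backward difference quotient applied to $\hamil'$ along the projected trajectory $\numhu$. Write $e^n := \numu^n - \numhu^n$. By symmetry of $\hamil''(\numhu^n)$, the expression equals
\begin{equation*}
\ip{\hamil''(\numhu^n)\ddt\numhu^n - \ddt\hamil'(\numhu^n)}{e^n}.
\end{equation*}
So the whole task reduces to showing the pointwise-in-space bound
\begin{equation*}
\bigl\|\hamil''(\numhu^n)\ddt\numhu^n - \ddt\hamil'(\numhu^n)\bigr\|_2 \le C\tStep,
\end{equation*}
after which Cauchy--Schwarz and weighted Young's inequality, $ab \le \alpha a^2 + \frac{1}{4\alpha}b^2$, give exactly $\alpha\|e^n\|_2^2 + \frac{1}{4\alpha}C\tStep^2$ (with $C$ the square of the constant above).

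To prove this bound, I expand $\hamil'(\numhu^{n-1})$ around $\numhu^n$ with second-order Taylor remainder. Since $\hamil'(\dens,\vel) = (\frac12\vel^2 + \ppot'(\dens),\, \dens\vel)$ is a smooth pointwise function, this is done pointwise in $x$:
\begin{equation*}
\hamil'(\numhu^{n-1}) = \hamil'(\numhu^n) - \tStep\,\hamil''(\numhu^n)\ddt\numhu^n + \tfrac12\tStep^2\,\hamil'''(\numhu^{***})[\ddt\numhu^n,\ddt\numhu^n],
\end{equation*}
with $\numhu^{***}\in[\numhu^{n-1},\numhu^n]$ an intermediate value. Dividing by $\tStep$ gives
\begin{equation*}
\hamil''(\numhu^n)\ddt\numhu^n - \ddt\hamil'(\numhu^n) = \tfrac{\tStep}{2}\,\hamil'''(\numhu^{***})[\ddt\numhu^n,\ddt\numhu^n].
\end{equation*}
The third derivative is bounded uniformly. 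Its entries are $\ppot'''(\dens)$ and constants, and the intermediate value lies in the range where Assumption \ref{ass:unif} holds; the projections preserve the $L^\infty$ bounds by Lemma \ref{lemma:proj_estimates}, and $\numhvelNL{n}$ stays bounded by Lemma \ref{lemma:dt_proj_estimates} and Remark \ref{remark:dt_proj_estimates}. Hence
\begin{equation*}
\|\hamil'''(\numhu^{***})[\ddt\numhu^n,\ddt\numhu^n]\|_2 \le C\,\|\ddt\numhu^n\|_\infty\|\ddt\numhu^n\|_2.
\end{equation*}

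The main obstacle is controlling $\ddt\numhu^n$ uniformly in $n$, $\tStep$ and $\xStep$. I would reuse the argument from the proof of Lemma \ref{lemma:dthp2}. For the density component, $\ddt\numhdens^n = \Pi_\xStep(\ddt\hdens^n)$ is bounded by $\|\partial_t\hdens\|_{\besov{\infty}{\infty}}$ via the mean value theorem and $L^\infty$-stability of $\Pi_\xStep$. For the velocity component, Lemma \ref{lemma:dt_proj_estimates} gives $\|\ddt(\numhvelNL{n})\|_*\le \|\partial_t\hvel\|_{\besov{\infty}{*}} + \tilde C_*(\tStep+\xStep)$. This is bounded independently of the discretization parameters for $\tStep\le\tStep_0$ and $\xStep\le\xStep_0$, and the $L^2$ norm is controlled by the $L^\infty$ norm times $\sqrt{\ell}$. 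Combining these bounds yields the claimed estimate with $C$ independent of $n$, $\tStep$ and $\xStep$.
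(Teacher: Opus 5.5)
Your proposal is correct and follows essentially the paper's argument. The paper writes $\ddt\hamil'(\numhu^n)=\hamil''(\numhu^{**})\ddt\numhu^n$ via a first-order mean-value expansion, turns the term into $\ip{(\hamil''(\numhu^n)-\hamil''(\numhu^{**}))\ddt\numhu^n}{\numu^n-\numhu^n}$, and applies the Lipschitz-type bound of Lemma \ref{lemma:bounded} together with $\|\numhu^n-\numhu^{**}\|_2\le\tStep\|\ddt\numhu^n\|_2$; this is the same content as your second-order Taylor remainder with bounded $\hamil'''$, and both routes end with $C\tStep\|\ddt\numhu^n\|_2\|\ddt\numhu^n\|_\infty\|\numu^n-\numhu^n\|_2$ followed by weighted Young's inequality.
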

\begin{proof}
We start by using Taylor's theorem to rewrite the term 
$$
\ip{\ddt\hamil'(\numhu^n)}{\numu^n-\numhu^n} = \ip{\hamil''(\numhu^{**})\ddt\numhu^n}{\numu^n-\numhu^n},
$$
with $\numhu^{**} \in [\numhu^{n-1},\numhu^n]$.
Using the fact that $\hamil''$ is symmetric, we thus have 
$$
\ip{\hamil''(\numhu^n)(\numu^n-\numhu^n)}{\ddt\numhu^n}  -\ip{\ddt\hamil'(\numhu^n)}{\numu^n-\numhu^n} = \ip{\left(\hamil''(\numhu^n) - \hamil''(\numhu^{**})\right)\ddt\numhu^n}{\numu^n-\numhu^n}.
$$
Applying Lemma \ref{lemma:bounded}, we obtain
\begin{align}
	\ip{\left(\hamil''(\numhu^n) - \hamil''(\numhu^{**})\right)\ddt\numhu^n}{\numu^n-\numhu^n} \leq C||\numhu^n - \numhu^{**}||_2 ||\numu^n - \numhu^n||_2 ||\ddt\numhu^n||_\infty.
\end{align}
Because $\numhu^{**}\in [\numhu^{n-1},\numhu^n]$, one has $||\numhu^n - \numhu^{**}||_2 \leq \tau ||\ddt\numhu^n||_2$. Due to the bounds from assumption \ref{ass:unif} and Lemma \ref{lemma:dt_proj_estimates}, by applying weighted Young's inequality with $\alpha \in \symbb{R}^+$ we obtain 
\begin{align}
	 ||\numhu^n - \numhu^{**}||_2 ||\numu^n - \numhu^n||_2 ||\ddt\numhu^n||_\infty \leq \alpha ||\numu^n - \numhu^n||_2^2 + \frac{1}{4\alpha} C \tau^2,
\end{align}
where $C$ depends only on properties of the original system state, in particular  $||\partial_t\hu||_{\besov{\infty}{2}}$ and $||\partial_t\hu||_{\besov{\infty}{\infty}}$.

\end{proof}

\begin{lemma}[Estimate of (II) from Lemma \ref{lemma:ddt_hamil}]
	\label{lemma:dth2}
Let $\numhu^n$ be the projected original system state \eqref{def:proj_state} and $\numu^n$ be a solution to system \eqref{eqn:dis_scheme_1} -- \eqref{eqn:dis_scheme_2}. Let Assumptions \ref{ass:unif} and \ref{ass:discrete} hold. Then, the following estimate holds:
\begin{equation}
	\begin{aligned}
		\ip{\hamil'(\numu^n)-\hamil'(\numhu^n)-\hamil''(\numhu^n)(\numu^n-\numhu^n)}{\ddt \numhu^n} \leq \frac 12 \max(1,\overbar{C}_{\ppot'''}) C_t|| \numu^n - \numhu^n||_2^2,
	\end{aligned}
\end{equation}
with $C_t =||\partial_t\numhdens||_{\besov{\infty}{\infty}} +  ||\partial_t\numhvel||_{\besov{\infty}{\infty}}$.
	
\end{lemma}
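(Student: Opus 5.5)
We will compute the integrand of the left-hand side pointwise, since $\hamil(\dens,\vel) = \int_0^\ell \frac 12 \vel^2 + \ppot(\dens)\dd{x}$ is a local functional. Its derivatives act pointwise as
\[
\hamil'(\dens,\vel) = \bigl(\ppot'(\dens),\vel\bigr), \qquad \hamil''(\dens,\vel) = \mathrm{diag}\bigl(\ppot''(\dens),1\bigr).
\]
Hence the vector $\hamil'(\numu^n)-\hamil'(\numhu^n)-\hamil''(\numhu^n)(\numu^n-\numhu^n)$ has velocity component
\[
\numvel^n - \numhvelNL{n} - (\numvel^n - \numhvelNL{n}) = 0,
\]
because the velocity part of the energy is quadratic. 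Its density component is the second-order Taylor remainder
\[
\ppot'(\numdens^n) - \ppot'(\numhdens^n) - \ppot''(\numhdens^n)(\numdens^n - \numhdens^n) = \tfrac12 \ppot'''(\dens^*)(\numdens^n-\numhdens^n)^2,
\]
with $\dens^* \in [\numhdens^n,\numdens^n]$ an intermediate value. Since $\numdens^n$ and $\numhdens^n$ are piecewise constant, this Taylor expansion can be applied cellwise without difficulty.

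The key step is bounding $\ppot'''(\dens^*)$. By \ref{ass:discrete}, $\numdens^n$ lies in $[\underbar\dens,\overbar\dens]$. By \ref{ass:unif} together with the $L^\infty$-stability of $\Pi_\xStep$ from Lemma \ref{lemma:proj_estimates}, so does $\numhdens^n$, and therefore $\dens^*$ does too. Smoothness of $\ppot$ in \ref{ass:convexity} then gives $\abs{\ppot'''(\dens^*)} \le \overbar C_{\ppot'''} \coloneqq \max_{[\underbar\dens,\overbar\dens]}\abs{\ppot'''}$.

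We then estimate by Hölder, pulling the time derivative out in $L^\infty$:
\[
\ip{\cdot}{\ddt\numhu^n} \le \tfrac12 \overbar C_{\ppot'''}\, \|\ddt\numhdens^n\|_\infty \|\numdens^n-\numhdens^n\|_2^2 .
\]
Here only $\ddt\numhdens^n$ appears, since the velocity component of the remainder vanishes. We use $\|\ddt\numhdens^n\|_\infty \le \|\partial_t\numhdens\|_{\besov{\infty}{\infty}}$: the difference quotient is a time average of $\Pi_\xStep\partial_t\hdens$, so this follows from the mean value theorem combined with the $L^\infty$-stability of $\Pi_\xStep$. Since this is at most $C_t$, and $\|\numdens^n-\numhdens^n\|_2 \le \|\numu^n-\numhu^n\|_2$, the claim follows; the $\max(1,\cdot)$ is simply a harmless enlargement of the constant.

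The only point requiring care is that all intermediate values stay in the admissible density range, so that the constant $\overbar C_{\ppot'''}$ is finite and independent of $n$, $\xStep$ and $\tStep$. This is guaranteed by \ref{ass:discrete} together with projection stability, as described above.
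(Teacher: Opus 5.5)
There is a genuine gap: your computation uses the wrong $\hamil'$ and $\hamil''$. You took the energy density to be $\frac12\vel^2+\ppot(\dens)$, as printed in \eqref{eqn:energy}, but that display is missing a factor $\dens$. Directly afterwards the paper states $(\hEnthalp,\hflow)=(\delta_\dens\hamil(\hu),\delta_\vel\hamil(\hu))$ with $\hEnthalp=\frac12\hvel^2+\ppot'(\hdens)$ and $\hflow=\hdens\hvel$. The proof of Lemma \ref{lemma:rel_diss} also uses exactly $\hamil'(\numu^n)-\hamil'(\numhu^n)=(\numEnthalp^n-\numhEnthalp^n,\numflow^n-\numhflow^n)$. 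This forces the energy density $\frac12\dens\vel^2+\ppot(\dens)$, whose Hessian $\hamil''(\dens,\vel)=\begin{pmatrix}\ppot''(\dens)&\vel\\ \vel&\dens\end{pmatrix}$ is not diagonal. This is also why the subsonic assumption \ref{ass:subsonic} is needed for convexity, and why $\underline{C}$ in Lemma \ref{lemma:bounded} involves $\underbar{\dens}$. With the correct derivatives, the remainder is
\begin{equation*}
\hamil'(\numu^n)-\hamil'(\numhu^n)-\hamil''(\numhu^n)(\numu^n-\numhu^n)=\begin{pmatrix}\frac12\ppot'''(\dens^*)(\numdens^n-\numhdens^n)^2+\frac12(\numvel^n-\numhvelNL{n})^2\\ (\numdens^n-\numhdens^n)(\numvel^n-\numhvelNL{n})\end{pmatrix}.
\end{equation*}
So your key claim that the velocity component vanishes is false, and the density component carries an extra quadratic velocity term. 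Your argument therefore bounds a different quantity from the term $(II)$ that enters Lemma \ref{lemma:ddt_hamil}. The symptoms are visible in your write-up: $\ddt(\numhflow^n/\numhdens^n)$ never appears, and the $\max(1,\cdot)$ and the velocity part of $C_t$ have no role.

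The repair is short. Pair the first component with $\ddt\numhdens^n$ and bound it by $\frac12\max(1,\overbar{C}_{\ppot'''})\|\ddt\numhdens^n\|_\infty\|\numu^n-\numhu^n\|_2^2$; this is where the $\max(1,\cdot)$ comes from. Pair the second component with $\ddt(\numhflow^n/\numhdens^n)$ and use $ab\le\frac12(a^2+b^2)$ to get $\frac12\|\ddt(\numhflow^n/\numhdens^n)\|_\infty\|\numu^n-\numhu^n\|_2^2$. The velocity time derivative must then be controlled through Lemma \ref{lemma:dt_proj_estimates} and Remark \ref{remark:dt_proj_estimates}, i.e. $\|\ddt(\numhflow^n/\numhdens^n)\|_\infty\le\|\partial_t\hvel\|_{\besov{\infty}{\infty}}+\tilde C_\infty(\tStep+\xStep)$. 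This is absorbed into $C_t$ for $\tStep\le\tStep_0$ and $\xStep\le\xStep_0$ sufficiently small. Your handling of the intermediate value $\dens^*$ (via \ref{ass:discrete} and $L^\infty$-stability of $\Pi_\xStep$) and of $\|\ddt\numhdens^n\|_\infty$ is correct and carries over unchanged.
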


\begin{proof}
Using the explicit formulas for $\hamil'$ and $\hamil''$, we obtain that 
	\begin{align}
	\hamil'(\numu^n)-\hamil'(\numhu^n)-\hamil''(\numhu^n)(\numu^n-\numhu^n) = 
	\begin{pmatrix}
		\ppot'(\numdens^n,\numhdens^n) + \frac 12 (\numvel^n - \numhvelNL{n})^2 \\
		(\numdens^n-\numhdens^n)(\numvel^n - \numhvelNL{n})
	\end{pmatrix}
\end{align}
with $	\ppot'(\numdens^n,\numhdens^n) \coloneqq \ppot'(\numdens^n) - \ppot'(\numhdens^n) - \ppot''(\numhdens^n)(\numdens^n-\numhdens^n)$. Applying Taylor's theorem, we obtain 
$$
\ppot'(\numdens^n,\numhdens^n) = \frac 12 \ppot'''(\numdens^*)(\numdens^n-\numhdens^n)^2,
$$
with $\numdens^* \in [\numdens^n,\numhdens^n]$. Using the uniform bounds from \ref{ass:unif} and the bounds on the discrete solution \ref{ass:discrete}, we can bound $\ppot'''(\numdens^*) \leq \overbar{C}_{\ppot'''}$. Finally, using Hölder's and Young's inequality we get 
\begin{equation}
\begin{aligned}
&	\ip{\hamil'(\numu^n)-\hamil'(\numhu^n)-\hamil''(\numhu^n)(\numu^n-\numhu^n)}{\ddt \numhu^n}\\
&	\qquad \leq \frac 12 \max(1,\overbar{C}_{\ppot'''}) \left(||\ddt\numhdens^n||_\infty + ||\ddt(\numhvelNL{n})||_\infty\right)|| \numu^n - \numhu^n||_2^2.
\end{aligned}
\end{equation}
The stability of the projection operators implies that $||\ddt\numhdens^n||_\infty \leq ||\partial_t \hdens^n||_\infty$. Further, using Lemma \ref{lemma:dt_proj_estimates} and Remark \ref{remark:dt_proj_estimates} gives 
$$
||\ddt\numhdens^n||_\infty + ||\ddt(\numhvelNL{n})||_\infty \leq C_t,
$$
with $C_t$ from \ref{ass:smallderivs}, for $\tStep$ and $\xStep$ sufficiently small.

\end{proof}
Finally, we need to estimate the term $(I)$ from Lemma \ref{lemma:ddt_hamil}.
This relies on exchanging discrete time derivatives for derivatives in space. For the numerical solution $\numu$,  this uses the discrete scheme \eqref{eqn:dis_scheme_1}--\eqref{eqn:dis_scheme_2}. For the projected original system state $\numhu^n$,  this incurs residuals, which we define as follows:
\begin{align}
	\ip{\res_1^n}{q} &\coloneqq \ip{\ddt\numhdens^n}{q} + \ip{\partial_x\numhflow^n}{q} - \ip{\Pi_\xStep \src_1^n}{q}.\label{eqn:res1}\\
	\ip{\res_2^n}{r}&\coloneqq \ip{\ddt(\numhvelNL{n})}{r} - \ip{\numhEnthalp^n}{\partial_x r} + \gamma\ip{\numhvelNL{n}\abs{\numhvelNL{n}}}{r}  + \hEnthalp^n_\partial r\bigg\lvert_{x=0}^{x=\ell} - \ip{\src_2^n}{r}\label{eqn:res2},
\end{align}
for any $q\in L^2(\xDom)$ and $r \in H^1(\xDom)$. In the following computations, the main technical challenge is estimating these residuals suitably -- as one would expect them to be “small” and decrease with the discretization parameters $\xStep$ and $\tStep$.

\begin{lemma}[Estimate for $(I)$ from Lemma \ref{lemma:ddt_hamil}]
	\label{lemma:rel_diss}
	Let the assumptions of Lemma \ref{lemma:ddt_hamil} hold. Further, let \ref{ass:meas_err} hold.
		Then, there exists a constant $C>0$ depending only on the bounds in \ref{ass:unif}, such that for any $\alpha \in \symbb{R}^+$, it holds that
	\begin{equation}
		\label{eqn:lemma_rel_diss}
		\begin{aligned}
			\ip{\hamil'(\numu^n) - \hamil'(\numhu^{n})}{\ddt(\numu^n - \numhu^n)} \leq& - \frac 12\mu\underbar\dens||\numvel^n-\numhvelNL{n}||_2^2 + \frac 12 \mu \frac{\overbar{v}^2}{\underbar{\dens}} ||\numdens^n - \numhdens^n||_2^2\\
			&+2\gamma \frac{\overbar{v}^3}{\underline{\dens}}||\numdens^n - \numhdens^n||_2^2  + \frac{\mu^2C^2}{\alpha}||\error||^2_{\besov{\infty}{2}} \\
			&-\diss(\numu^n,\numhu^n)+\frac{\mu^2}{4\alpha}C \xStep^2 + \alpha||\numu^n - \numhu^n||_2^2\\
			&- \ip{\res_1^n}{\numEnthalp^n - \numhEnthalp^n} - \ip{\res_2^n}{\numflow^n - \numhflow^n},
		\end{aligned}
	\end{equation}
	
	with the relative dissipation functional $\diss$, defined as 
	\begin{equation}
	\diss(\numu^n,\numhu^n) \coloneqq \frac14\gamma \ip{\numhdens^n(\numvel^n - \numhvelNL{n}) }{(\numvel^n - \numhvelNL{n})(|\numvel^n| + |\numhvelNL{n}|)}.
	\label{eqn:rel_diss}
	\end{equation}
\end{lemma}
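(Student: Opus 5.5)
We would start from the identity $\hamil'(\numu^n)-\hamil'(\numhu^n) = (\numEnthalp^n-\numhEnthalp^n,\ \numflow^n-\numhflow^n)$. This holds because $\delta_\dens\hamil = \frac12 v^2+\ppot'(\dens)=\Enthalp$ and $\delta_\vel\hamil=\dens\vel=\flow$. For the projected state, the velocity is taken to be $\numhvelNL{n}$, so its $\delta_\vel$-component is $\numhdens^n\cdot \numhflow^n/\numhdens^n=\numhflow^n$. The term $(I)$ then splits as
\begin{equation*}
\ip{\numEnthalp^n-\numhEnthalp^n}{\ddt(\numdens^n-\numhdens^n)}+\ip{\numflow^n-\numhflow^n}{\ddt(\numvel^n-\numhvelNL{n})}.
\end{equation*}
In the first pairing we replace $\ddt\numdens^n$ using \eqref{eqn:dis_scheme_1} with $q_\xStep=\Pi_\xStep(\numEnthalp^n-\numhEnthalp^n)$. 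This is legitimate because $\ddt\numdens^n$, $\partial_x\numflow^n$, $\ddt\numhdens^n$ and $\partial_x\numhflow^n$ all lie in $Q_\xStep$ (the last by Lemma \ref{lemma:proj_estimates}), so the projection can be dropped. We replace $\ddt\numhdens^n$ using the residual definition \eqref{eqn:res1}, and the source terms cancel. In the second pairing we take $r=\numflow^n-\numhflow^n$ in \eqref{eqn:dis_scheme_2} and \eqref{eqn:res2}. This $r$ lies in $R_\xStep$, since both functions equal $\hflow_\partial(t^n)$ at $x=0$ and are piecewise linear and continuous. The boundary terms cancel at $x=\ell$ because both use $\Enthalp_\partial^n$, and they vanish at $x=0$ because $r(0)=0$.

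After these substitutions the flux terms take the form $-\ip{\partial_x(\numflow^n-\numhflow^n)}{\numEnthalp^n-\numhEnthalp^n}+\ip{\numEnthalp^n-\numhEnthalp^n}{\partial_x(\numflow^n-\numhflow^n)}=0$. This is the skew-adjoint port-Hamiltonian cancellation. What remains are the residual terms $-\ip{\res_1^n}{\numEnthalp^n-\numhEnthalp^n}-\ip{\res_2^n}{\numflow^n-\numhflow^n}$, which are kept as they appear in the statement, together with the friction and nudging terms
\begin{equation*}
-\gamma\ip{|\numvel^n|\numvel^n-|\numhvelNL{n}|\numhvelNL{n}}{\numflow^n-\numhflow^n}-\mu\ip{\numvel^n-\numhvel^n-\numerror^n}{\numflow^n-\numhflow^n}.
\end{equation*}

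For the nudging term we write $\numflow^n-\numhflow^n=\numdens^n(\numvel^n-\numhvelNL{n})+(\numdens^n-\numhdens^n)\numhvelNL{n}+(\numhdens^n\numhvelNL{n}-\numhflow^n)$, where the last piece is $0$ because $\numhvelNL{n}$ is defined via $\numhflow^n$. We also write $\numvel^n-\numhvel^n=(\numvel^n-\numhvelNL{n})+(\numhvelNL{n}-\numhvel^n)$. The leading product gives $-\mu\ip{\numdens^n(\numvel^n-\numhvelNL{n})}{\numvel^n-\numhvelNL{n}}\le-\mu\underbar\dens\|\numvel^n-\numhvelNL{n}\|_2^2$ by \ref{ass:discrete}. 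The cross term $\mu\ip{\numvel^n-\numhvelNL{n}}{(\numdens^n-\numhdens^n)\numhvelNL{n}}$ is handled by Young's inequality with weight $\underbar\dens/2$. This gives $\frac12\mu\underbar\dens\|\numvel^n-\numhvelNL{n}\|_2^2+\frac12\mu\frac{\overbar v^2}{\underbar\dens}\|\numdens^n-\numhdens^n\|_2^2$, which leaves exactly the stated $-\frac12\mu\underbar\dens$. The remaining pieces are bounded with the $\alpha$-weighted Young inequality. The consistency part $\|\numhvelNL{n}-\numhvel^n\|_2\le C\xStep$ follows from Lemma \ref{lemma:dt_proj_estimates} and Lemma \ref{lemma:proj_estimates}. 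The measurement error satisfies $\|\numerror^n\|_2\le C\|\error\|_{\besov{\infty}{2}}$ by continuity, via the $L^\infty$ stability of $I_\xStep$ and the bound $\|\cdot\|_2\le\sqrt\ell\|\cdot\|_\infty$; here we would accept a constant depending on $\ell$ and the $L^\infty$ bound. Both are paired against $\|\numflow^n-\numhflow^n\|_2\lesssim\|\numu^n-\numhu^n\|_2$, which produces $\alpha\|\numu^n-\numhu^n\|_2^2$, $\frac{\mu^2 C}{4\alpha}\xStep^2$ and $\frac{\mu^2C^2}{\alpha}\|\error\|^2$.

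The main obstacle is the friction term. We would use the same splitting of $\numflow^n-\numhflow^n$. The part $-\gamma\ip{|a|a-|b|b}{\numdens^n(a-b)}$, with $a=\numvel^n$ and $b=\numhvelNL{n}$, uses the elementary monotonicity bound $(|a|a-|b|b)(a-b)\ge\frac12(a-b)^2(|a|+|b|)$. Since $\numdens^n$ and $\numhdens^n$ are both at least $\underbar\dens$, this is bounded by $-2\diss(\numu^n,\numhu^n)$. The statement only needs $-\diss$, and the other half absorbs the coupling part $-\gamma\ip{|a|a-|b|b}{(\numdens^n-\numhdens^n)b}$. For the coupling part we use $||a|a-|b|b|\le|a-b|(|a|+|b|)$ and $|b|\le\overbar v$. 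Young's inequality, weighted against $\frac14\gamma\numhdens^n(a-b)^2(|a|+|b|)$, leaves a remainder of order $\gamma\frac{(|a|+|b|)\overbar v^2}{\underbar\dens}(\numdens^n-\numhdens^n)^2\le2\gamma\frac{\overbar v^3}{\underbar\dens}(\numdens^n-\numhdens^n)^2$. Getting the constants to match exactly, in particular the factor $\frac14$ in $\diss$ and the factor $2\overbar v^3$, is the delicate bookkeeping step. If they do not match as planned, we would adjust the Young weights.
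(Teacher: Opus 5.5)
Your derivation of the error identity and your treatment of the nudging term are correct and follow the paper. For the identity, this covers testing the mass balance with $\Pi_\xStep(\numEnthalp^n-\numhEnthalp^n)$, testing \eqref{eqn:dis_scheme_2} and \eqref{eqn:res2} with $r=\numflow^n-\numhflow^n\in R_\xStep$, the skew cancellation of the flux terms, and the vanishing boundary terms.

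\textbf{The step that fails is in the friction term.} You split $\numflow^n-\numhflow^n=\numdens^n(\numvel^n-\numhvelNL{n})+(\numdens^n-\numhdens^n)\numhvelNL{n}$. With this splitting, the monotonicity bound gives $-\tfrac12\gamma\ip{\numdens^n(\numvel^n-\numhvelNL{n})}{(\numvel^n-\numhvelNL{n})(|\numvel^n|+|\numhvelNL{n}|)}$. This term is weighted by the observer density $\numdens^n$, whereas $\diss$ in \eqref{eqn:rel_diss} is weighted by $\numhdens^n$. To conclude that it is $\le -2\diss(\numu^n,\numhu^n)$, you would need $\numdens^n\ge\numhdens^n$, at least in a weighted-average sense, and nothing provides this. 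Knowing that both densities are $\ge\underline{\dens}$ only gives $\le -2\frac{\underline{\dens}}{\overbar{\dens}}\diss(\numu^n,\numhu^n)$. In general that does not leave $-\diss$ once the coupling part has been absorbed. Readjusting Young weights, as you suggest at the end, cannot cure this, because the mismatch is in the weight function, not in a constant.

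\textbf{The fix is to use the other product-rule splitting,} $\numflow^n-\numhflow^n=\numhdens^n(\numvel^n-\numhvelNL{n})+\numvel^n(\numdens^n-\numhdens^n)$, which is the paper's \eqref{eqn:hflow_split}. It is exactly what your splitting becomes if you write $\numdens^n=\numhdens^n+(\numdens^n-\numhdens^n)$ in the monotone part and merge the extra piece into the coupling part. With this splitting, the monotone part is exactly $\le -2\diss$. The coupling part is bounded by $\gamma\ip{(|\numvel^n|+|\numhvelNL{n}|)|\numvel^n-\numhvelNL{n}|}{|\numvel^n|\,|\numdens^n-\numhdens^n|}$. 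Applying Young against $\frac14\gamma\numhdens^n(\numvel^n-\numhvelNL{n})^2(|\numvel^n|+|\numhvelNL{n}|)$ bounds it by $\diss+2\gamma\frac{\overbar{\vel}^3}{\underline{\dens}}\|\numdens^n-\numhdens^n\|_2^2$, which gives the stated constants. Here you use $|\numvel^n|\le\overbar{\vel}$ from \ref{ass:discrete}. The same splitting also handles the nudging term, using $\numhdens^n\ge\underline{\dens}$. Your splitting does prove the estimate with a $\numdens^n$-weighted analogue of $\diss$, but that is not the functional in the statement.

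\textbf{A smaller point concerns the measurement error.} $L^\infty$-stability of $I_\xStep$ together with $\|\cdot\|_2\le\sqrt{\ell}\|\cdot\|_\infty$ gives $\|\numerror^n\|_2\le\sqrt{\ell}\,\|\error\|_{L^\infty((0,\infty)\times\xDom)}$. It does not give a bound by $\|\error\|_{\besov{\infty}{2}}$ with a constant depending only on \ref{ass:unif}. So state that term with the sup norm rather than absorbing the sup norm into $C$. The paper passes from $\|\numerror^n\|_2$ to $\|\error\|_{\besov{\infty}{2}}$ directly. That step would need an $L^2$ bound for nodal interpolation, which continuity of $\error$ alone does not provide, so on this point your sup-norm version is the more careful one.
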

Note that the relative dissipation functional is positive, and can be bounded from below by  ${\diss(\numu^n,\numhu^n) \geq c||\numvel^n - \numhvelNL{n}||_3^3}$ with some $c>0$, so the relative dissipation functional provides some additional control over the difference in velocity.

 Estimate \eqref{eqn:lemma_rel_diss} again coincides with the continuous estimate, up to residual terms in the last line of equation \eqref{eqn:lemma_rel_diss} and the additional terms proportional to $\alpha$. We treat the residual terms separately in two further lemmas.
 \begin{remark}
 	\label{remark:coarse_data}
 	Let us assume that the measured data is not available at arbitrarily fine resolution, but instead, it is only available on a coarse grid with spacing $H$. Using Lemma \ref{lemma:dt_proj_estimates} we have $||\sfrac{\hflow^n_H}{\hdens^n_H} - \hvel^n||_2 \leq C H$, with $\hflow^n_H \coloneqq I_H(\hflow^n)$ and $\hdens^n_H$ defined analogously. Then, in \eqref{eqn:lemma_rel_diss}, the term $\frac{\mu^2}{4\alpha}C \xStep^2$ needs to be replaced by $\frac{\mu^2}{4\alpha}C H^2$. This term is then treated the same way as the contribution due to the measurement error $\error$ for the final error bound, Theorem \ref{thm:unif_conv}, and will not decay even as $\xStep, \tStep \rightarrow 0$.
 	
 	This reflects the observations from \cite{lissy_perrin_2025} and \cite{titi_collins_2025} about high-frequency modes which are not resolved by the measurement data potentially leading to non-convergence of the observer system. If the original system state has high-frequency modes with large amplitude, the linear interpolant of the measured data will not approximate $\hvel$ well, leading to a large constant in the above estimate and thus overall poor performance of the observer.
 \end{remark}
\begin{proof}[Proof of Lemma \ref{lemma:rel_diss}]
	Recall the discrete mass balance of the observer
	\begin{equation}
		\label{eqn:pw_const_ident}
	\ip{\ddt\numdens^n}{q} + \ip{\partial_x \numflow}{q} = \ip{\Pi_\xStep \src_1^n}{q}, \quad \forall q \in Q_\xStep.
	\end{equation}
	Testing with the indicator function on each cell of the mesh, because $\ddt\numdens^n$,  $\partial_x \numflow^n$ and $\Pi_\xStep \src_1^n$ are piecewise constant, we can identify $\ddt\numdens^n = -\partial_x \numflow^n + \Pi_\xStep \src_1^n$.
	 Thus, despite $(\numEnthalp^n - \numhEnthalp^n) \notin Q_\xStep$, we have
	 \begin{equation}
	 		\begin{aligned}
	 		\ip{\hamil'(\numu^n) - \hamil'(\numhu^{n})}{\ddt(\numu^n - \numhu^n)} &=&& \ip{\ddt(\numdens^n - \numhdens^n)}{\numEnthalp^n - \numhEnthalp^n} + \ip{\ddt(\numvel^n - \numhvelNL{n})}{\numflow^n - \numhflow^n} \\
	 		&=&& -\ip{\partial_x(\numflow^n - \numhflow^n)}{\numEnthalp^n - \numhEnthalp^n} - \ip{\res_1^n}{\numEnthalp^n - \numhEnthalp^n} \\
	 		&&& +\ip{\partial_x(\numflow^n - \numhflow^n)}{\numEnthalp^n - \numhEnthalp^n} - \ip{\res_2^n}{\numflow^n - \numhflow^n}\\
	 		&&& - \gamma\ip{|\numvel^n|\numvel^n - |\numhvelNL{n}|\numhvelNL{n}}{\numflow^n - \numhflow^n}  \\
	 		&&& - \mu\ip{\numvel^n -\numhvel^n }{\numflow^n - \numhflow^n}- (\Enthalp_\partial^n- \hEnthalp_{\partial}^n)(\numflow^n - \numhflow^n)\biggl\lvert_{x=0}^{x=\ell}\\
	 		&&&+\mu \ip{\numerror^n }{\numflow^n - \numhflow^n}
	 	\end{aligned}
	 \end{equation}
	At $x=0$, we have $\numflow^n =\numhflow^n$ and at $x=\ell$ we have $\Enthalp_\partial^n = \hEnthalp_{\partial}^n$, so the boundary contributions vanish.
	We start by estimating the nudging term. Because $\numhvel^n \neq \numhvelNL{n}$, we rewrite the nudging term as
		\begin{equation}
		\begin{aligned}
			-\mu\ip{\numvel^n -\numhvel^n }{\numflow^n - \numhflow^n} &=	-\mu\ip{\numvel^n -\numhvelNL{n} }{\numflow^n - \numhflow^n} 	-\mu\ip{\numhvelNL{n} -\numhvel^n }{\numflow^n - \numhflow^n} \\
			&\leq 	-\mu\ip{\numvel^n -\numhvelNL{n} }{\numflow^n - \numhflow^n} 	+\frac{\mu^2}{4\alpha}||\numhvelNL{n} -\numhvel^n||_2^2 + \alpha||\numflow^n - \numhflow^n||_2^2,
		\end{aligned}
	\end{equation}
	for any $\alpha \in \symbb{R}^+$.
	Applying the projection error estimates from Lemma \ref{lemma:dt_proj_estimates}, we get
	\begin{equation}
		\frac{\mu^2}{4\alpha}||\numhvelNL{n} -\numhvel^n||_2^2\leq \frac{\mu^2}{4\alpha}C \xStep^2,
		\label{eqn:disvel_proj_err}
	\end{equation}
	where $C$ depends only on the bounds from assumption \ref{ass:unif}.
	 Writing 
	 \begin{equation}
	 	\label{eqn:hflow_split}
	 	\numflow^n - \numhflow^n = \numhdens^n(\numvel^n-\numhvelNL{n}) + \numvel^n(\numdens^n -\numhdens^n)
	 \end{equation}  and  using the uniform bounds from \ref{ass:unif} and \ref{ass:discrete} together with  Young's inequality for $\alpha \in \symbb{R}^+$, we get 
	\begin{equation}
		\begin{aligned}
			-\mu\ip{\numvel^n -\numhvelNL{n} }{\numflow^n - \numhflow^n} &\leq -\mu\underbar\dens||\numvel^n-\numhvelNL{n}||_2^2 - \mu\ip{\numvel^n-\numhvelNL{n}}{\numvel^n(\numdens^n -\numhdens^n)}\\
			&\leq - \frac 12\mu\underbar\dens||\numvel^n-\numhvelNL{n}||_2^2 + \frac 12 \mu \frac{\overbar{v}^2}{\underbar{\dens}} ||\numdens^n - \numhdens^n||_2^2.
		\end{aligned}
	\end{equation}
 Also, due to equation \eqref{eqn:hflow_split}, with $C = \max(\overbar\dens,\overbar\vel)$, we get
 \begin{equation}
 	\alpha||\numflow^n - \numhflow^n||_2^2 \leq  \alpha C^2 ||\numu^n -\numhu^n||_2^2,
 \end{equation}
  	To estimate the measurement error term, we use Young's inequality with $\alpha \in \symbb{R}^+$ and the above identity and obtain
 	\begin{equation}
 		\mu \ip{\numerror^n }{\numflow^n - \numhflow^n} \leq \frac{\mu^2C^2}{\alpha}||\numerror^n||_2^2 + \alpha ||\numu^n - \numhu^n||_2^2 \leq \frac{\mu^2C^2}{\alpha}||\error||^2_{\besov{\infty}{2}} + \alpha ||\numu^n - \numhu^n||_2^2. 
 	\end{equation}
 	Lastly, we estimate the relative dissipation term. For any $\vel,\hvel \in \symbb{R}$, it holds that 
 	\begin{align}
 		\label{eqn:fric_est}
 	|\vel|\vel - |\hvel|\hvel = 2(\vel - \hvel)\int_0^1 |\hvel + s(\vel - \hvel)|\dd{s},
 \end{align}
 	and that the integral expression can be bounded by 
 	\begin{equation}
 		\frac 14 (|\vel| + |\hvel|) \leq \int_0^1 |\hvel + s(\vel - \hvel)|\dd{s} \leq \frac 12 (|\vel| + |\hvel|).
 	\end{equation}
 	Both of these identities can be checked by elementary computations.
 	Thus, using equation \eqref{eqn:hflow_split}, we get that 
 	\begin{equation}
 		\begin{aligned}
 			- \gamma\ip{|\numvel^n|\numvel^n - |\numhvelNL{n}|\numhvelNL{n}}{\numflow^n - \numhflow^n} &\leq \gamma\ip{(|\numvel^n| + |\numhvelNL{n}|)|\numvel^n - \numhvelNL{n}|}{|\numvel^n|(\numdens^n - \numhdens^n )}\\
 			&\quad-\frac 12\gamma\ip{(\numvel^n - \numhvelNL{n})(|\numvel^n| + |\numhvelNL{n}|)}{\numhdens^n(\numvel^n - \numhvelNL{n})} \\
 			&\leq 2\gamma \frac{\overbar{v}^3}{\underline{\rho}}||\numdens^n - \numhdens^n||^2_2 -\symcal{D}(\numu^n,\numhu^n).
 		\end{aligned} 
 	\end{equation}
 	The last estimate above relies on \ref{ass:unif}, \ref{ass:discrete} and weighted Young's inequality. 
 	

\end{proof}

\begin{lemma}[Estimate for $\res_1^n$]
	\label{lemma:res1}
	Let Assumption \ref{ass:unif} hold and $\numhu$ be the given by  \eqref{def:proj_state}. Then, it holds that
	\begin{equation}
		\label{eqn:res1_norm}
		||\res_1^n||_2 \leq \frac{\tau}{2}||\partial_{tt}\hdens||_{\besov{\infty}{2}}.
	\end{equation}
	Additionally, let Assumptions \ref{ass:discrete} and \ref{ass:convexity} hold. Then, there exists a $C>0$, such that for any $\alpha \in \symbb{R}^+$, the following estimate holds:
	\begin{equation}
		-\ip{\mathrm{res}_1^n}{\numEnthalp^n - \numhEnthalp^n} \leq \alpha C ||\numu^n - \numhu^n||_2^2 + \frac{\tau^2}{16\alpha}||\partial_{tt}\hdens||_{\besov{\infty}{2}}^2,
	\end{equation}
	where  $C \coloneqq 2\max\{\overbar{v}^2,\overbar{C}_{\ppot''}^2\}$ is independent of $n$, $\tStep$ and $\xStep$.
\end{lemma}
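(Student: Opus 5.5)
The plan is to identify $\res_1^n$ explicitly as a pure time-discretization error of the projected density, and then bound the pairing with $\numEnthalp^n-\numhEnthalp^n$ by Cauchy--Schwarz and weighted Young.

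\textbf{Step 1: identify the residual.} Integrate the mass balance \eqref{eqn:masscons} at time $t^n$ over each cell and use $\partial_x(I_\xStep \hflow)=\Pi_\xStep(\partial_x\hflow)$ from Lemma \ref{lemma:proj_estimates}. This gives $\partial_x\numhflow^n=\Pi_\xStep(\partial_x\hflow^n)=\Pi_\xStep(\src_1^n-\partial_t\hdens^n)$. Since $\Pi_\xStep$ commutes with $\ddt$, the definition \eqref{eqn:res1} yields, for all $q\in L^2(\xDom)$,
$$\ip{\res_1^n}{q}=\ip{\Pi_\xStep\bigl(\ddt\hdens^n-\partial_t\hdens^n\bigr)}{q},$$
so we may take $\res_1^n=\Pi_\xStep(\ddt\hdens^n-\partial_t\hdens^n)$.

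\textbf{Step 2: bound the residual.} The $L^2$-projection is a contraction in $L^2$, and Taylor's theorem with integral remainder gives
$$\ddt\hdens^n-\partial_t\hdens^n=-\frac1\tStep\int_{t^{n-1}}^{t^n}(s-t^{n-1})\,\partial_{tt}\hdens(s)\dd{s}.$$
Minkowski's inequality then bounds its $L^2$ norm by $\frac\tStep2\|\partial_{tt}\hdens\|_{\besov{\infty}{2}}$. This uses the regularity $W^{2,\infty}$ in time from \ref{ass:unif}, and proves \eqref{eqn:res1_norm}.

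\textbf{Step 3: bound the enthalpy difference.} Write
$$\numEnthalp^n-\numhEnthalp^n=\tfrac12(\numvel^n+\numhvelNL{n})(\numvel^n-\numhvelNL{n})+\ppot''(\dens^*)(\numdens^n-\numhdens^n)$$
with an intermediate value $\dens^*\in[\numdens^n,\numhdens^n]$. By \ref{ass:unif}, \ref{ass:discrete} and Remark \ref{remark:dt_proj_estimates}, the velocities are bounded by $\overbar\vel$ and $\ppot''(\dens^*)\le\overbar C_{\ppot''}$ on $[\underbar\dens,\overbar\dens]$. Hence
$$\|\numEnthalp^n-\numhEnthalp^n\|_2^2\le 2\max\{\overbar\vel^2,\overbar C_{\ppot''}^2\}\,\|\numu^n-\numhu^n\|_2^2.$$

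\textbf{Step 4: combine.} Cauchy--Schwarz and weighted Young give
$$-\ip{\res_1^n}{\numEnthalp^n-\numhEnthalp^n}\le \alpha\|\numEnthalp^n-\numhEnthalp^n\|_2^2+\frac1{4\alpha}\|\res_1^n\|_2^2.$$
Inserting Steps 2 and 3 yields the claim with the stated $C$ and the factor $\tStep^2/(16\alpha)$.

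The only delicate point is Step 1. The residual must involve no spatial projection error, because the derivative-commuting property of $I_\xStep$ makes the space terms cancel exactly. We also need the velocity bound for the projected state $\numhvelNL{n}$ to hold uniformly, which Remark \ref{remark:dt_proj_estimates} provides for small $\xStep$.
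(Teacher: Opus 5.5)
Your proposal is correct and follows the paper's proof essentially step by step. The commuting property $\partial_x I_\xStep = \Pi_\xStep \partial_x$ reduces $\res_1^n$ to the pure time-discretization error $\Pi_\xStep(\ddt\hdens^n - \partial_t\hdens^n)$, Taylor's theorem gives the $\frac{\tStep}{2}$ bound, and weighted Young's inequality together with the pointwise Lipschitz bound on the enthalpy difference yields $C = 2\max\{\overbar{v}^2,\overbar{C}_{\ppot''}^2\}$ and the factor $\tStep^2/(16\alpha)$. The only difference is cosmetic: you apply the $L^2$-contraction of $\Pi_\xStep$ before using the integral form of the Taylor remainder, which is slightly cleaner for $L^2$-valued functions than the paper's Taylor step on $\numhdens$.
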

\begin{proof}
	By the choice of projection operators and the commuting diagram property,\\ ${\Pi_h(\partial_t\hdens^n) = -\Pi_\xStep(\partial_x\hflow^n) + \Pi_\xStep(\src_1^n) = -\partial_x I_h(\hflow^n)}+ \Pi_\xStep(\src_1^n)$, we have 
	$$
	-\ip{\mathrm{res}_1^n}{q} = \ip{\partial_t \numhdens^n - \ddt\numhdens^n }{q},\quad \forall q\in L^2(\xDom).
	$$
	From Taylor's theorem and the continuity of the $L^2$--projection, it holds that
	\begin{equation}
||\res_1^n||_2 = 	||\partial_t \numhdens^n - \ddt\numhdens^n||_2 \leq \frac{\tau}{2}||\partial_{tt}\numhdens||_{\besov{\infty}{2}} = \frac{\tau}{2}||\partial_{tt}\hdens||_{\besov{\infty}{2}}.	
	\end{equation}
	By weighted Young's inequality, with $\alpha \in \symbb{R}^+$,  we get 
	$$
	\ip{\res_1^n}{\numEnthalp^n - \numhEnthalp^n}\leq \alpha||\numEnthalp^n - \numhEnthalp^n||_2^2 + \frac{1}{4\alpha}||\res_1^n||_2^2.
	$$
	The first term can be estimated using 
	\begin{equation}
		|\numEnthalp^n - \numhEnthalp^n| = | \frac12 (|\numvel^n|^2 - |\numhvelNL{n}|^2) + P'(\numdens^n) - P'(\numhdens^n)| \leq \overbar{v}|\numvel^n - \numhvelNL{n}| + \overbar{C}_{P''}|\numdens^n - \numhdens^n|,
		\label{eqn:proj_err_enthalp}
	\end{equation}
	such that $||\numEnthalp^n - \numhEnthalp^n||_2^2  \leq C||\numu^n - \numhu^n||_2^2$ for some $C>0$ depending only on the uniform bounds from \ref{ass:unif} and \ref{ass:discrete}.
\end{proof}
\begin{lemma}[Estimate for $\res_2^n$]
	\label{lemma:res2h}
Let the assumptions of Lemma \ref{lemma:ddt_hamil} hold. Further, let \ref{ass:meas_err} hold.
	 Then, for all $0 < \tStep \leq \tStep_0$ and $0 < \xStep \leq \xStep_0$, there exists a constant $C>0$, such that for any $\alpha \in \symbb{R}^+$ it holds that
	\begin{equation}
	\begin{aligned}
		- \ip{\res_2^n}{\numflow^n - \numhflow^n} \leq& \alpha (||\numu^n - \numhu^n||_2^2 + ||\numu^{n-1} - \numhu^{n-1}||_2^2) + C(1 + \frac{1}{\alpha})(\xStep^2 + \tStep^2)  \\
		&+ \frac12 \diss(\numu^n,\numhu^n) +\ddt(\ip{\hEnthalp^n - \numhEnthalp^n}{\numdens^n - \numhdens^n}).
	\end{aligned}
	\end{equation}
	The constant $C$ depends on $\tStep_0$, $\xStep_0$ and the properties of the original system solution $\hu$.
\end{lemma}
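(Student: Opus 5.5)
The plan is to compare $\res_2^n$ with the weak form of the continuous momentum equation \eqref{eqn:momcons}, evaluated at $t^n$ and tested with $r \coloneqq \numflow^n - \numhflow^n$. This is an admissible test function: it is in $H^1(\xDom)$ and vanishes at $x=0$, since both $\numflow^n$ and $\numhflow^n$ take the boundary value $\hflow_\partial(t^n)$ there. Multiplying \eqref{eqn:momcons} by $r$, integrating by parts and using $\hEnthalp^n_\partial = \hEnthalp^n$ at $x=\ell$ gives $\ip{\partial_t\hvel^n}{r} - \ip{\hEnthalp^n}{\partial_x r} + \gamma\ip{|\hvel^n|\hvel^n}{r} + \hEnthalp^n_\partial r\lvert_{x=0}^{x=\ell} = \ip{\src_2^n}{r}$. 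Subtracting this from \eqref{eqn:res2}, the boundary and source terms cancel, and we get
\begin{equation*}
\ip{\res_2^n}{r} = \ip{\ddt(\numhvelNL{n}) - \partial_t\hvel^n}{r} + \ip{\hEnthalp^n - \numhEnthalp^n}{\partial_x r} + \gamma\ip{\numhvelNL{n}|\numhvelNL{n}| - \hvel^n|\hvel^n|}{r}.
\end{equation*}
I will treat the three terms separately.

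\emph{First and third terms.} By Lemma \ref{lemma:dt_proj_estimates}, the first term is bounded by $\tilde C_2(\xStep+\tStep)\,||r||_2$. Via \eqref{eqn:hflow_split} and the bounds \ref{ass:unif}, \ref{ass:discrete}, we have $||r||_2 \leq \max(\overbar\dens,\overbar\vel)\,||\numu^n-\numhu^n||_2$. Weighted Young's inequality then gives $\alpha||\numu^n-\numhu^n||_2^2 + \frac{C}{\alpha}(\xStep^2+\tStep^2)$. For the friction term, the function $v\mapsto |v|v$ is Lipschitz on bounded sets, so its integrand is $\symcal{O}(\xStep)$ pointwise by \eqref{eqn:numhvel_bound}, and the same Young argument applies. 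If the constants work out better, I would instead split $r$ via \eqref{eqn:hflow_split}: the part involving $\numhdens^n(\numvel^n-\numhvelNL{n})$, weighted by $|\numvel^n|+|\numhvelNL{n}|$, can be absorbed into $\frac12\diss(\numu^n,\numhu^n)$ up to a term of order $\xStep^2$, while the density part is handled by Young. This explains the $\frac12\diss$ appearing in the statement.

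\emph{Main obstacle: the enthalpy term.} The term $\ip{\hEnthalp^n-\numhEnthalp^n}{\partial_x r}$ is the hard one, because we have no control over $\partial_x(\numflow^n-\numhflow^n)$ in $L^2$. The plan is to trade this space derivative for a time derivative using the mass equations. Testing \eqref{eqn:dis_scheme_1} and \eqref{eqn:res1} with indicator functions of the cells, as in the proof of Lemma \ref{lemma:rel_diss}, gives the piecewise-constant identity $\partial_x r = -\ddt(\numdens^n-\numhdens^n) - \res_1^n$, where $\res_1^n$ is taken as its $Q_\xStep$ representative. Hence $-\ip{\hEnthalp^n-\numhEnthalp^n}{\partial_x r} = \ip{\hEnthalp^n-\numhEnthalp^n}{\ddt(\numdens^n-\numhdens^n)} + \ip{\hEnthalp^n-\numhEnthalp^n}{\res_1^n}$. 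The second piece is the product of an $\symcal{O}(\xStep)$ factor (by Lemma \ref{lemma:dt_proj_estimates} and \eqref{eqn:proj_err_enthalp} applied to $\hu$ versus $\numhu$) and an $\symcal{O}(\tStep)$ factor (by Lemma \ref{lemma:res1}), so it is bounded by $C(\xStep^2+\tStep^2)$. For the first piece, I apply the discrete product rule $\ip{a^n}{\ddt b^n} = \ddt\ip{a^n}{b^n} - \ip{\ddt a^n}{b^{n-1}}$. This produces exactly the term $\ddt(\ip{\hEnthalp^n-\numhEnthalp^n}{\numdens^n-\numhdens^n})$ from the statement, together with the remainder $-\ip{\ddt(\hEnthalp^n-\numhEnthalp^n)}{\numdens^{n-1}-\numhdens^{n-1}}$.

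To close the argument, I need $||\ddt(\hEnthalp^n-\numhEnthalp^n)||_2 \leq C(\xStep+\tStep)$; more precisely, $\symcal{O}(\xStep)$ up to Taylor remainders. I would prove this as in the proof of Lemma \ref{lemma:dt_proj_estimates}: write the difference quotient as an average of $\partial_t$, then use the chain rule together with the commutation of $\partial_t$ with $\Pi_\xStep$ and $I_\xStep$, the projection estimates of Lemma \ref{lemma:proj_estimates} applied to $\partial_t\hdens$ and $\partial_t\hflow$, and the regularity assumptions \ref{ass:unif}. Young's inequality then gives $\alpha||\numu^{n-1}-\numhu^{n-1}||_2^2 + \frac{C}{\alpha}(\xStep^2+\tStep^2)$. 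Collecting all terms and renaming constants yields the claimed estimate, valid for $\tStep\leq\tStep_0$ and $\xStep\leq\xStep_0$ so that the bounds of Remark \ref{remark:dt_proj_estimates} hold. I expect this last regularity estimate for $\ddt(\hEnthalp-\numhEnthalp)$ to be the most delicate bookkeeping step.
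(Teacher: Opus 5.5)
Your proposal is correct, and its core matches the paper's proof. Both compare $\res_2^n$ with the weak form tested with $r=\numflow^n-\numhflow^n$ and use the same three-term splitting. Both also handle the enthalpy term the same way: the discrete mass balance, the discrete product rule, and the telescoping term $\ddt(\ip{\hEnthalp^n-\numhEnthalp^n}{\numdens^n-\numhdens^n})$.

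Two pieces differ.

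\emph{Friction term.} The paper splits $|\hvel^n|\hvel^n-|\numhvelNL{n}|\numhvelNL{n}$ into three terms $(D)+(E)+(F)$ and absorbs the velocity-error parts into $\frac14\diss+\frac14\diss$. Your first option already suffices, so you do not need to hedge toward the dissipation-based variant. The friction difference is $\symcal{O}(\xStep)$ in $L^\infty$ by \eqref{eqn:numhvel_bound}, so Cauchy--Schwarz and Young give $\alpha\|\numu^n-\numhu^n\|_2^2+\frac{C}{\alpha}\xStep^2$. This is exactly how the paper treats the analogous term $(\tilde C)$ when testing $\res_2^n$ with $\numnM^n-\numhM^n$. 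With your route, the $\frac12\diss$ in the statement is just nonnegative slack, since $\diss\geq 0$ and the stated bound follows trivially. Downstream you would also keep the full $-\diss$ from Lemma \ref{lemma:rel_diss} rather than $-\frac12\diss$.

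\emph{Bound on $\|\ddt(\hEnthalp^n-\numhEnthalp^n)\|_2$.} The paper splits this into three terms, two of them Taylor remainders that need $\partial_{tt}\hEnthalp$ and $\partial_{tt}\numhEnthalp$, and obtains $\symcal{O}(\tStep+\xStep)$. You write the difference quotient as a time average of $\partial_t(\hEnthalp-\numhEnthalp)$. This gives $\symcal{O}(\xStep)$ directly and uses only the $W^{1,\infty}(W^{1,\infty})$ regularity, together with the bound on $\partial_t\hvel^n-\partial_t(\numhvelNL{n})$ from \eqref{eqn:dthvel_proj_error}.

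Both routes prove the lemma; yours is slightly leaner and yields a marginally stronger inequality.
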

\begin{proof}
	We begin by recalling that $\hu$ satisfies the weak formulation of the barotropic Euler equations. In particular, for any $r \in H^1(\xDom)$ it holds that
	$$
	0 = \ip{\partial_t\hvel^n}{r} - \ip{\hEnthalp^n}{\partial_x r} + \gamma \ip{\abs{\hvel^n}\hvel^n}{r} + \hEnthalp_\partial^n r\biggl\lvert_{x=0}^{x=\ell} - \ip{\src_2^n}{r}.
	$$
	Adding the above weak formulation tested with $ r = \numflow^n - \numhflow^n$ to the residual term we get
	\begin{equation}
	\begin{aligned}
		- \ip{\res_2^n}{\numflow^n - \numhflow^n} =& \ip{\partial_t\hvel^n - \ddt(\numhvelNL{n})}{\numflow^n - \numhflow^n} - \ip{\hEnthalp^n - \numhEnthalp^n}{\partial_x (\numflow^n - \numhflow^n)} 
		\\\quad&+ \gamma \ip{\abs{\hvel^n}\hvel^n-\abs{\numhvelNL{n}}\numhvelNL{n}}{\numflow^n - \numhflow^n} +\ip{\src_2^n - \src_2^n}{\numflow^n - \numhflow^n}
		\\&+ (\hEnthalp_\partial^n - \hEnthalp_\partial^n) (\numflow^n - \numhflow^n)\biggl\lvert_{x=0}^{x=\ell} \label{eqn:proj_diss}
		\eqqcolon (A) + (B) + (C) + 0 + 0
	\end{aligned}
	\end{equation}
	The boundary terms vanish due to $\numflow^n = \numhflow^n$ at $x = 0$ and because $\numu$ satisfies the boundary condition on $\hEnthalp_\partial^n$  weakly at $x=\ell$.
	
	We estimate $(A)$ by Young's inequality. For any $\alpha \in \symbb{R}^+$, the following estimate holds:
	\begin{align}
		\ip{\partial_t\hvel^n - \ddt(\numhvelNL{n})}{\numflow^n - \numhflow^n} \leq \frac{1}{4\alpha}||\partial_t\hvel^n - \ddt(\numhvelNL{n})||_2^2 + \alpha||\numflow^n - \numhflow^n||_2^2.
	\end{align}
	Using equation \eqref{eqn:hflow_split} and the uniform bounds from \ref{ass:unif} and \ref{ass:discrete}, we have 
	$$
	\alpha||\numflow^n - \numhflow^n||_2^2 \leq\alpha C ||\numu^n -\numhu^n||_2^2,
	$$
	with $C$ depending only on the uniform bounds \ref{ass:unif}.
	Further, by Lemma \ref{lemma:dt_proj_estimates}, we have 
	\begin{equation}
		||\partial_t\numvel^n - \ddt(\numhvelNL{n})||_2^2 \leq C \left(\xStep^2 + \tStep^2\right),
	\end{equation}
	for some $C>0$ independent of $n$, $\tStep$ and $\xStep$.
	
	For $(B)$ in \eqref{eqn:proj_diss}, we apply the discrete formulation of the mass balance, getting 
	\begin{align}
		\ip{\hEnthalp^n - \numhEnthalp^n}{\partial_x (\numflow^n - \numhflow^n)} = -\ip{\hEnthalp^n - \numhEnthalp^n}{\ddt(\numdens^n - \numhdens^n)} - \ip{\res_1^n}{\hEnthalp^n - \numhEnthalp^n} \label{eqn:enthalperr}
	\end{align}
	Using Young's inequality gives
	\begin{align}
		\ip{\res_1^n}{\hEnthalp^n - \numhEnthalp^n} \leq \frac 12 ||\res_1^n||_2^2 + \frac 12 ||\hEnthalp^n - \numhEnthalp^n||_2^2 \leq C^2 \tau^2+ \frac 12 ||\hEnthalp^n - \numhEnthalp^n||_2^2.
	\end{align}
	by Lemma \ref{lemma:res1}.
	Combining estimate \eqref{eqn:proj_err_enthalp} with the projection error estimates and Lemma \ref{lemma:dt_proj_estimates}, we get
	\begin{equation}
		||\hEnthalp^n - \numhEnthalp^n||_2  \leq \overbar{v}||\hvel^n - \numhvelNL{n}||_2 + \overbar{C}_{P''}||\hdens^n - \numhdens^n||_2\leq C \xStep,
		\label{eqn:enthalp_proj_est}
	\end{equation}
	for some constant $C>0$.
	We apply discrete integration by parts to the first term in \eqref{eqn:enthalperr}, $(\ddt a^n)b^n = -a^{n-1}\ddt b^n + \ddt(a^nb^n)$.	
	Additionally, applying Young's inequality gives
	\begin{equation}
		\begin{aligned}
		-\ip{\hEnthalp^n - \numhEnthalp^n}{\ddt(\numdens^n - \numhdens^n)} &= \ip{\ddt(\hEnthalp^n - \numhEnthalp^n)}{\numdens^{n-1} - \numhdens^{n-1}}  -\ddt\left(\ip{\hEnthalp^n - \numhEnthalp^n}{\numdens^n - \numhdens^n}\right)\\
		&\leq \alpha ||\numdens^{n-1} - \numhdens^{n-1}||_2^2 + \frac{1}{4\alpha}||\ddt(\hEnthalp^n - \numhEnthalp^n)||_2^2 -\ddt(\ip{\hEnthalp^n - \numhEnthalp^n}{\numdens^n - \numhdens^n}).
		\end{aligned}
	\end{equation}
	 The term $\ddt(\ip{\hEnthalp^n - \numhEnthalp^n}{\numdens^n - \numhdens^n})$ will yield a telescoping sum when adding up all the time steps for Gronwall's Lemma in the final convergence proof.
	We split the projection term as follows: 
	$$
	||\ddt(\hEnthalp^n - \numhEnthalp^n)||_2^2 \leq 3\left(||\ddt\hEnthalp^n - \partial_t\hEnthalp^n||_2^2 + ||\partial_t(\hEnthalp^n - \numhEnthalp^n)||_2^2+ ||\partial_t\numhEnthalp^n - \ddt\numhEnthalp^n||_2^2\right).
	$$
	With the bounds and regularity from \ref{ass:unif}, Taylor's theorem and the projection error estimates, the separate terms can be bounded as 
	\begin{align}
		||\ddt\hEnthalp^n - \partial_t\hEnthalp^n||_2^2 &\leq C\tStep^2||\partial_{tt} \hEnthalp||^2_{\besov{\infty}{2}}\leq \tilde C \tStep^2,\\
		||\partial_t(\hEnthalp^n - \numhEnthalp^n)||_2^2 &\leq C \xStep^2||\partial_t \hEnthalp||^2_{L^{\infty}(0,\infty;H^1(\xDom))} \leq \tilde C \xStep^2, \\
		||\partial_t\numhEnthalp^n - \ddt\numhEnthalp^n||_2^2&\leq C\tStep^2||\partial_{tt} \numhEnthalp||^2_{\besov{\infty}{2}}\leq \tilde C \tStep^2.
	\end{align}
	The constants depend only on the bounds from Assumption \ref{ass:unif} and norms of the original system state $\hu$.
	
	This leaves estimating $(C)$ from \eqref{eqn:proj_diss}. We will see that parts of this term can be absorbed into the relative dissipation functional from Lemma \ref{lemma:rel_diss}.
	We rewrite $(C)$ in the following fashion and estimate the contributions individually: 
	\begin{equation}
	\begin{aligned}
		\gamma\ip{ \,\abs{\hvel^n}\hvel^n - \abs{\numhvelNL{n}}\numhvelNL{n}}{\numflow^n - \numhflow^n} =& \gamma\ip{ (|\hvel^n| - |\numhvelNL{n}|)(\hvel^n - \numhvelNL{n})}{\numflow^n - \numhflow^n} \\
		&+\gamma\ip{ |\numhvelNL{n}|(\hvel^n - \numhvelNL{n})}{\numflow^n - \numhflow^n}  \\
		&+ \gamma\ip{\numhvelNL{n}(|\hvel^n| - |\numhvelNL{n}|)}{\numflow^n - \numhflow^n} \eqqcolon (D) + (E) + (F).
	\end{aligned}
	\end{equation}
	Using Hölder's and Young's inequality, for any $\alpha_1, \alpha_2 \in \symbb{R}^+$, by using equation \eqref{eqn:hflow_split} we have 
	\begin{equation}
		\begin{aligned}
			(D) &\leq \overbar{\rho}\gamma ||(\hvel^n - \numhvelNL{n})^2||_{3/2}||\numvel^n - \numhvelNL{n}||_3 + \overbar{v}\gamma||(\hvel^n - \numhvelNL{n})^2||_2||\numdens^n -\numhdens^n||_2\\
			&\leq \frac{2}{3\alpha_1^{3/2}}||\hvel^n - \numhvelNL{n}||_3^3 +\frac{1}{3}\alpha_1^3||\numvel^n - \numhvelNL{n}||_3^3 + \frac{1}{2\alpha_2}||(\hvel^n - \numhvelNL{n})||_4^4 + \frac{\alpha_2}{2}||\numdens^n -\numhdens^n||_2^2.
		\end{aligned}
	\end{equation}
	 Choosing $\alpha_1$ sufficiently small, we have $\frac{1}{3}\alpha_1^3||\numvel^n - \numhvelNL{n}||_3^3 \leq \frac 14 \diss(\numu^n,\numhu^n)$. Further, from the projection error estimates, we have $||\hvel^n - \numhvelNL{n}||_3^3 \leq C \xStep^3$ and ${||(\hvel^n - \numhvelNL{n})||_4^4 \leq C \xStep^4}$. For $\xStep$ sufficiently small, the $\xStep^3$ and $\xStep^4$ terms are both bounded by $\xStep^2$.
	
	The remaining two terms are estimated by 
	\begin{equation}
		\begin{aligned}
			(E) + (F) &\leq \gamma^2 \frac{1}{\alpha_3}||\numhvelNL{n}||_\infty||\hvel^n - \numhvelNL{n}||_2^2 + \alpha_3 ||(\numflow^n - \numhflow^n)|\numhvelNL{n}|^{1/2}||_2^2 \\
			&\leq \frac{Ch^2}{\alpha_3} + 2\alpha_3 \overbar{\dens} \int_0^\ell \numhdens^n \abs{\numvel^n - \numhvelNL{n}}^2 |\numhvelNL{n}|\dd{x}+ 2\alpha_3 \overbar{\vel}^2||\numhvelNL{n}||_\infty||\numdens^n  - \numhdens^n||_2^2,\\
			&\leq \frac{Ch^2}{\alpha_3} + \frac{1}{4} \symcal{D}(\numu^n, \numhu^n) + \alpha_3 C ||\numu^n  - \numhu^n||_2^2,
		\end{aligned}
	\end{equation}
	
	for $\alpha_3 \in \symbb{R}^+$ sufficiently small.
	Gathering up the individual estimates, and renaming constants gives the statement of the lemma.
\end{proof}
The previous lemmas thus show, that the discrete time derivative of the relative energy is consistent with the continuous time derivative. We gather the overall estimates in the following result:
\begin{theorem}[Estimate for $\ddt\hamil(\numu^n,\numhu^n)$]
	Let $\numhu^n$ be the projected original system state given by \eqref{def:proj_state} and let $\numu^n$ be a solution of system \eqref{eqn:dis_scheme_1} -- \eqref{eqn:dis_scheme_2}. Let $0 < \tStep \leq \tStep_0$ and $0 < \xStep \leq \xStep_0$. Let Assumptions \ref{ass:unif} --\ref{ass:subsonic}, \ref{ass:discrete} and \ref{ass:meas_err} hold. Then, there exist constants ${C_1,C_2,C_3,C_4>0}$ such that for any $\alpha \in \symbb{R}^+$, the following estimate holds:
	\begin{equation}
		\begin{aligned}
			\ddt\hamil(\numu^n,\numhu^n)	 \leq& - \frac 12\mu\underbar\dens||\numvel^n-\numhvelNL{n}||_2^2 + \frac 12 \mu \frac{\overbar{v}^2}{\underbar{\dens}} ||\numdens^n - \numhdens^n||_2^2+ 2\gamma \frac{\overbar{v}^3}{\underline{\rho}}||\numdens^n - \numhdens^n||_2^2 \\
			 & +\frac 12 \max(1,\overbar{C}_{\ppot'''}) C_t|| \numu^n - \numhu^n||_2^2-\frac 12 \diss(\numu^n,\numhu^n) + \frac{\mu^2C_1}{\alpha}||\error||^2_{\besov{\infty}{2}}
			 \\& - \frac{\tStep}{2\underline C}||\ddt(\numu^n - \numhu^n)||_2^2 + \ddt(\ip{\hEnthalp^n - \numhEnthalp^n}{\numdens^n - \numhdens^n})\\
			&+C_2 (\alpha + \tStep)||\numu^n - \numhu^n||_2^2 + C_3( \alpha + \tStep) ||\numu^{n-1} - \numhu^{n-1}||_2^2 + C_4 (1 + \frac{1}{\alpha})(\xStep^2 + \tStep^2).\\
		\end{aligned}
	\end{equation}
	Further, the constants $C_1,C_2,C_3$ and $C_4$ are independent of $n$, $\tStep$ and  $\xStep$. The relative dissipation functional $\diss$ is given by \eqref{eqn:rel_diss}.
\end{theorem}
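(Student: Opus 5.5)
The proof assembles the four term estimates. The starting point is the exact identity of Lemma~\ref{lemma:ddt_hamil}, which writes $\ddt\hamil(\numu^n,\numhu^n)=(I)+(II)+(III)+(IV)$. We bound each piece by the lemma proved for it, all with the same free parameter $\alpha\in\symbb{R}^+$, for $0<\tStep\le\tStep_0$ and $0<\xStep\le\xStep_0$.

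\begin{itemize}
\item For $(I)$ we use Lemma~\ref{lemma:rel_diss}. It gives the nudging dissipation $-\frac12\mu\underbar\dens\|\numvel^n-\numhvelNL{n}\|_2^2$, the two density terms with $\mu\overbar v^2/\underbar\dens$ and $\gamma\overbar v^3/\underline\dens$, the full dissipation $-\diss(\numu^n,\numhu^n)$, the measurement error term $\mu^2C^2\alpha^{-1}\|\error\|^2_{\besov{\infty}{2}}$, an $\alpha\|\numu^n-\numhu^n\|_2^2$ term, the projection term $\mu^2C\xStep^2/(4\alpha)$, and the two residual pairings.
\item The residual pairings are handled by Lemma~\ref{lemma:res1} and Lemma~\ref{lemma:res2h}. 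They contribute $\alpha$-multiples of $\|\numu^n-\numhu^n\|_2^2$ and $\|\numu^{n-1}-\numhu^{n-1}\|_2^2$, terms $C(1+\alpha^{-1})(\xStep^2+\tStep^2)$, the telescoping term $\ddt(\ip{\hEnthalp^n-\numhEnthalp^n}{\numdens^n-\numhdens^n})$, and $+\frac12\diss(\numu^n,\numhu^n)$.
\item The $+\frac12\diss$ from Lemma~\ref{lemma:res2h} is absorbed by the $-\diss$ from Lemma~\ref{lemma:rel_diss}, which leaves $-\frac12\diss(\numu^n,\numhu^n)$ as stated.
\item Term $(II)$ is Lemma~\ref{lemma:dth2} verbatim and gives the $C_t$ term.
\item Term $(III)$ gives $\alpha\|\numu^n-\numhu^n\|_2^2+C\tStep^2/\alpha$.
\item Term $(IV)$ is Lemma~\ref{lemma:dthp2}. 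It gives $(\alpha+C_1\tStep)$ times the sum of the errors at steps $n-1$ and $n$, the terms $C_2\tStep^2/\alpha+C_3\tStep^2$, and the numerical dissipation in $\|\ddt(\numu^n-\numhu^n)\|_2^2$.
\end{itemize}

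Next we collect like terms and rename constants. All $\alpha$-proportional error terms, each with its lemma-specific constant such as $C^2=\max(\overbar\dens,\overbar\vel)^2$ or $2\max\{\overbar v^2,\overbar C_{\ppot''}^2\}$, are merged into $C_2\alpha\|\numu^n-\numhu^n\|_2^2+C_3\alpha\|\numu^{n-1}-\numhu^{n-1}\|_2^2$. The $\tStep$-proportional error terms from $(IV)$ are merged into the same $C_2\tStep$ and $C_3\tStep$ terms.

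For the consistency terms, the quantities $\mu^2\xStep^2/\alpha$, $\tStep^2/(16\alpha)\|\partial_{tt}\hdens\|^2$, $\tStep^2/\alpha$ and $\tStep^2$ are all bounded by $C_4(1+\alpha^{-1})(\xStep^2+\tStep^2)$. This allows $C_4$ to depend on $\mu$, which is fixed. The measurement error terms give $\mu^2C_1\alpha^{-1}\|\error\|^2$.

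The constants depend only on the bounds in \ref{ass:unif}--\ref{ass:subsonic}, \ref{ass:discrete} and the norms of $\hu$, through the earlier lemmas. None of those lemmas introduced dependence on $n$, $\tStep$ or $\xStep$, so the constants are independent of these.

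The main point needing care is the numerical dissipation term. Lemma~\ref{lemma:dthp2} yields $-\frac12\tStep\underline C\|\ddt(\numu^n-\numhu^n)\|_2^2$. The theorem writes the coefficient as $\tStep/(2\underline C)$, so we must check that the claim is at least as weak as the proved bound; for that we need the stated coefficient to be no larger than the proved one. If the intended form differs, we would simply keep the nonpositive term with the coefficient actually obtained, or drop it.

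We also verify two facts the paper uses implicitly:
\begin{itemize}
\item Young's inequality is applied with the same $\alpha$ throughout. This is legitimate because each lemma holds for arbitrary $\alpha$.
\item The smallness conditions on $\tStep_0$ and $\xStep_0$ are those of Lemmas~\ref{lemma:dthp2} and \ref{lemma:res2h} and Remark~\ref{remark:dt_proj_estimates}. We take the minimum of these thresholds.
\end{itemize}
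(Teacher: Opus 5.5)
Your proposal is correct and matches how the paper obtains this theorem. The paper gives no separate proof: it sums the decomposition of Lemma~\ref{lemma:ddt_hamil} with the estimates for $(I)$--$(IV)$, $\res_1^n$ and $\res_2^n$, absorbs the $+\frac12\diss(\numu^n,\numhu^n)$ from Lemma~\ref{lemma:res2h} into the $-\diss(\numu^n,\numhu^n)$ from Lemma~\ref{lemma:rel_diss}, and merges the remaining terms by renaming constants.

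On the point you left open, the check you propose fails in general. It needs $\underline{C}\geq 1$, but $\underline{C}=\min(\underline{C}_{\ppot''},\underline\dens)/4$ is typically below $1$. The displayed coefficient $\tStep/(2\underline{C})$ is therefore a misprint for $\frac12\tStep\underline{C}$, which is what Lemma~\ref{lemma:dthp2} delivers. It is also the form Lemma~\ref{lemma:negtimederiv} later cancels against $\delta\cdot\frac{\tStep}{2\delta}\underline{C}||\ddt(\numu^n-\numhu^n)||_2^2$ from the $\disG$ estimate. So keep the term with coefficient $\frac12\tStep\underline{C}$ rather than dropping it: dropping it proves a weaker statement and breaks that later cancellation.
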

\subsection{Modified relative energy.}

In this section, following the strategy from \cite{kunkel_obs}, we give a discrete functional $\disG$, which satisfies the norm equivalence requirement \ref{req:norm_equiv}. To handle the discretization errors and the discrete time derivative, the requirement \ref{req:dissipative} is modified:
\begin{enumerate}[leftmargin = 1.5cm,label=\textbf{(R\arabic*h)},ref=\textbf{(R\arabic*h)},start=2]
	\item \label{req:mod_dissipative} For any choice of $\mu$, there exist sufficiently small choices of $\delta$ and the bounds in Assumption \ref{ass:smallderivs} such that 
	\begin{equation}
		\begin{aligned}
				\ddt \left(\hamil(\numu^n,\numhu^n) + \delta\disG(\numu^n,\numhu^n)\right) \leq& -C_1\left(\hamil(\numu^n,\numhu^n) + \delta\disG(\numu^n,\numhu^n)\right) 
\\&+ C_2(\tStep^2 + \xStep^2) + C_3 ||\error||^2_{\besov{\infty}{2}},
		\end{aligned}	
	\end{equation}
	with constants $C_1,C_2,C_3>0$ depending on the bounds in Assumptions \ref{ass:unif} -- \ref{ass:smallderivs}, \ref{ass:meas_err} and \ref{ass:discrete}. Additionally, we require that  $C_1,C_2$ and $C_3$ are independent of $n$, $\xStep$ and $\tStep$.
\end{enumerate}
Once \ref{req:mod_dissipative} has been established, applying Gronwall's Lemma shows Theorem \ref{thm:unif_conv}.
We define the auxiliary functional in the following way: Let 
\begin{equation}
	\numnM^n(x) \coloneqq -\int_0^x \numdens^n(s)\dd{s},\quad \text{ and } \quad	\numhM^n(x) \coloneqq -\int_0^x \numhdens^n(s)\dd{s}.
\end{equation}
Then, we define $\disG(\numu^n,\numhu^n)$ as 
\begin{equation}
	\disG(\numu^n,\numhu^n) \coloneqq \ip{\numnM^n - \numhM^n}{\numvel^n-\numhvelNL{n}}.
\end{equation}
Note that this definition differs from the definition in \cite{kunkel_obs}, where the functions $\nM$ and $\hM$ integrate $\flow$ and $\hflow$ in time, respectively. The time integration causes problems, because the estimates then depend on $n$, which is exactly what we want to avoid.
We start by verifying the norm equivalence.
\begin{lemma}
	\label{lemma:mod_norm_eqiv}
	 	Let $\numhu^n$ be the projected original system state given by \eqref{def:proj_state} and let $\numu^n$ be a solution of system \eqref{eqn:dis_scheme_1} -- \eqref{eqn:dis_scheme_2}. Let $\cpoin$ be the Poincaré constant of the unit interval. Then, 
	\begin{align}
		\label{eqn:abs_disG}
	|\disG(\numu^n,\numhu^n)| \leq& \frac12 \cpoin \ell \left(||\numvel^n - \numhvelNL{n}||_2^2 +||\numdens^n - \numhdens^n||_2^2\right) 
	\end{align}
	Further, let assumptions \ref{ass:unif} --\ref{ass:subsonic} and \ref{ass:discrete} hold and let $\delta\in\symbb{R}^+$ with  
	$$
	\delta \leq \frac{c_0}{\cpoin \ell}
	$$
	and $c_0,C_0$ from Lemma \ref{lemma:norm_equiv}.
	Then, 
	\begin{equation}
		\frac 12 c_0 ||\numu^n - \numhu^n||_2^2 \leq \hamil(\numu^n,\numhu^n) + \delta \disG(\numu^n,\numhu^n) \leq \frac{3}{2} C_0||\numu^n - \numhu^n||_2^2.
		\label{eqn:mod_rel_energy}
	\end{equation}
\end{lemma}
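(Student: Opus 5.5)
The plan is to bound $|\disG|$ by Cauchy--Schwarz plus a Poincaré-type estimate, and then to combine this with Lemma~\ref{lemma:norm_equiv}.

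\emph{Step 1 (bound on $\disG$).} Since $\numnM^n(0)=\numhM^n(0)=0$, the difference $w\coloneqq\numnM^n-\numhM^n$ is absolutely continuous on $\xDom$, vanishes at $x=0$, and satisfies $\partial_x w = -(\numdens^n-\numhdens^n)$. A Poincaré inequality for functions vanishing at one endpoint therefore applies. On the unit interval it reads $\|w\|_{L^2(0,1)}\le \cpoin\|\partial_x w\|_{L^2(0,1)}$. Rescaling $x=\ell y$ gives $\|w\|_2\le \cpoin\ell\|\partial_x w\|_2 = \cpoin\ell\|\numdens^n-\numhdens^n\|_2$. Alternatively one can use the elementary estimate $|w(x)|\le \sqrt{x}\,\|\numdens^n-\numhdens^n\|_2$, which gives the constant $\ell/\sqrt2$; the paper's normalization of $\cpoin$ absorbs this.

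Cauchy--Schwarz then yields
\[
|\disG(\numu^n,\numhu^n)|\le \|w\|_2\,\|\numvel^n-\numhvelNL{n}\|_2\le \cpoin\ell\,\|\numdens^n-\numhdens^n\|_2\,\|\numvel^n-\numhvelNL{n}\|_2 .
\]
Applying $ab\le\frac12(a^2+b^2)$ gives \eqref{eqn:abs_disG}. No assumptions beyond integrability are needed for this step, which is consistent with the statement imposing none for the first claim.

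\emph{Step 2 (norm equivalence).} Recall that $\numhu^n=(\numhdens^n,\numhvelNL{n})$, so $\|\numu^n-\numhu^n\|_2^2=\|\numdens^n-\numhdens^n\|_2^2+\|\numvel^n-\numhvelNL{n}\|_2^2$, and the bound from Step 1 becomes $|\disG|\le\frac12\cpoin\ell\|\numu^n-\numhu^n\|_2^2$.

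To apply Lemma~\ref{lemma:norm_equiv} to the pair $(\numu^n,\numhu^n)$, both states must satisfy the pointwise bounds \eqref{eqn:pointwise_unif_bd}. For $\numu^n$ this is Assumption~\ref{ass:discrete}. For $\numhu^n$, Lemma~\ref{lemma:proj_estimates} shows that $\Pi_\xStep$ preserves the density bounds. The velocity bound holds up to $C_\infty\xStep$, by Lemma~\ref{lemma:dt_proj_estimates} and Remark~\ref{remark:dt_proj_estimates}. This is the only delicate point: for $\xStep\le\xStep_0$ small the bounds hold with slightly enlarged constants, and we use the same names for them as the paper does.

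Lemma~\ref{lemma:norm_equiv} then gives $c_0\|e\|_2^2\le\hamil(\numu^n,\numhu^n)\le C_0\|e\|_2^2$ with $e=\numu^n-\numhu^n$. Since $\delta\le c_0/(\cpoin\ell)$, we have $\delta|\disG|\le \frac12 c_0\|e\|_2^2$. Consequently
\[
\hamil(\numu^n,\numhu^n)+\delta\disG(\numu^n,\numhu^n)\ge c_0\|e\|_2^2-\tfrac12c_0\|e\|_2^2=\tfrac12c_0\|e\|_2^2,
\]
and, using $c_0\le C_0$,
\[
\hamil(\numu^n,\numhu^n)+\delta\disG(\numu^n,\numhu^n)\le C_0\|e\|_2^2+\tfrac12c_0\|e\|_2^2\le\tfrac32C_0\|e\|_2^2 .
\]
This is \eqref{eqn:mod_rel_energy}. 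The main obstacle is only bookkeeping: getting the Poincaré scaling in $\ell$ right and justifying that the projected state satisfies the bounds required by Lemma~\ref{lemma:norm_equiv}.
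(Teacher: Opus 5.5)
Your proof is correct and follows the paper's argument: Cauchy--Schwarz plus the Poincaré inequality for $\numnM^n-\numhM^n$ (which vanishes at $x=0$ and has derivative $-(\numdens^n-\numhdens^n)$), then Young's inequality, and finally Lemma~\ref{lemma:norm_equiv} together with $\delta\le c_0/(\cpoin\ell)$. You are more explicit than the paper about the $\ell$-scaling and about the projected state satisfying the velocity bound only up to $C_\infty\xStep$ (as in Remark~\ref{remark:dt_proj_estimates}); one small quibble is that your aside with constant $\ell/\sqrt2$ is not literally ``absorbed'' if $\cpoin$ is the sharp constant $2/\pi<1/\sqrt2$, but your main line does not rely on that aside.
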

\begin{proof}
		We start by applying Hölder's inequality and Poincaré's inequality, which is applicable due to $\numnM^n = \numhM^n = 0$ in $x=0$. This gives
		\begin{equation}
			\begin{aligned}
				\disG(\numu^n,\numhu^n) &=  \ip{\numnM^n - \numhM^n}{\numvel^n-\numhvelNL{n}} \leq ||\numnM^n - \numhM^n||_2 ||\numvel^n - \numhvelNL{n}||_2 \\
				& \leq \cpoin \ell || \partial_x(\numnM^n - \numhM^n)||_2||\numvel^n - \numhvelNL{n}||_2.
				\label{eqn:Gest1}
			\end{aligned}
		\end{equation}
	By definition, we have $|| \partial_x(\numnM^n - \numhM^n)||_2 = ||\numdens^n-\numhdens^n||_2$. Finally applying Young's inequality gives equation \eqref{eqn:abs_disG}.
	The norm equivalence is a direct consequence of equation \eqref{eqn:abs_disG} for the given choice of $\delta$.

\end{proof}

Next, we compute the discrete time derivative of $\disG(\numu^n,\numhu^n)$.
\begin{lemma}[Discrete time derivative of $\disG$]
	\label{lemma:ddt_disG}
	It holds that
	\begin{equation}
	\begin{aligned}
		\ddt \disG(\numu^n,\numhu^n) =& \ip{\ddt(\numnM^n - \numhM^n )}{\numvel^{n} - \numhvelNL{n}} + \ip{\numnM^n - \numhM^n  }{\ddt(\numvel^n - \numhvelNL{n})}\\
		&- \tStep\ip{\ddt(\numnM^n - \numhM^n )}{\ddt(\numvel^n - \numhvelNL{n})}\eqqcolon (I) + (II) + (III).
	\end{aligned}
	\end{equation}
\end{lemma}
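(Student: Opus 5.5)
This is a purely algebraic discrete product rule, so the plan is to expand the definition of $\ddt$ and regroup. Abbreviate $a^n \coloneqq \numnM^n - \numhM^n$ and $b^n \coloneqq \numvel^n - \numhvelNL{n}$, so that $\disG(\numu^n,\numhu^n) = \ip{a^n}{b^n}$. By definition of the backward difference quotient and bilinearity of the $L^2$ inner product,
\begin{equation*}
	\ddt \disG(\numu^n,\numhu^n) = \frac{1}{\tStep}\left(\ip{a^n}{b^n} - \ip{a^{n-1}}{b^{n-1}}\right).
\end{equation*}

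The key step is the exact identity
\begin{equation*}
	\ip{a^n}{b^n} - \ip{a^{n-1}}{b^{n-1}} = \ip{a^n - a^{n-1}}{b^n} + \ip{a^n}{b^n - b^{n-1}} - \ip{a^n - a^{n-1}}{b^n - b^{n-1}}.
\end{equation*}
To check it, expand the right-hand side. This gives $\ip{a^n}{b^n} - \ip{a^{n-1}}{b^n} + \ip{a^n}{b^n} - \ip{a^n}{b^{n-1}} - \ip{a^n}{b^n} + \ip{a^n}{b^{n-1}} + \ip{a^{n-1}}{b^n} - \ip{a^{n-1}}{b^{n-1}}$. All cross terms cancel, leaving $\ip{a^n}{b^n} - \ip{a^{n-1}}{b^{n-1}}$.

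Dividing by $\tStep$ and writing $a^n - a^{n-1} = \tStep\,\ddt a^n$ and $b^n - b^{n-1} = \tStep\,\ddt b^n$ turns the first two terms into $(I)$ and $(II)$. The third term becomes $-\tStep^{-1}\tStep^2\ip{\ddt a^n}{\ddt b^n} = (III)$. Finally, $\ddt$ is linear, so $\ddt a^n = \ddt(\numnM^n - \numhM^n)$ and $\ddt b^n = \ddt(\numvel^n - \numhvelNL{n})$, which is exactly the stated form.

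I expect no real obstacle here; the only point needing care is to use the asymmetric splitting with both factors at level $n$ in $(I)$ and $(II)$, which forces the negative $\tStep$-correction $(III)$. Well-posedness of each inner product requires all quantities to lie in $L^2(\xDom)$. This holds because $\numnM^n,\numhM^n$ are Lipschitz (integrals of bounded functions), $\numvel^n$ is bounded by \ref{ass:discrete}, and $\numhvelNL{n}$ is bounded since $\numhdens^n \geq \underbar{\dens}$.
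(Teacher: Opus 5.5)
Your proposal is correct and follows the same approach as the paper: the paper simply says the identity follows directly from the definition of the discrete time derivative. Your explicit discrete product-rule expansion, including the $-\tStep\ip{\ddt a^n}{\ddt b^n}$ correction term, is exactly the computation the paper leaves implicit.
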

\begin{proof}
	The claim follows directly by applying the definition of the discrete time derivative.
\end{proof}

We now estimate the contributions individually.
\begin{lemma}[Estimating (I) from Lemma \ref{lemma:ddt_disG}]
	\label{lemma:disG_est_1}
	Let $\numhu^n$ be the projected original system state given by \eqref{def:proj_state} and let $\numu^n$ be a solution of system \eqref{eqn:dis_scheme_1} -- \eqref{eqn:dis_scheme_2}.
	 Let assumptions \ref{ass:unif} and \ref{ass:discrete} hold. Then, there exists a $C>0$ such that, for any $\alpha \in \symbb{R}^+$,	we get 
	\begin{equation} \ip{\ddt(\numnM^n - \numhM^n)}{\numvel^{n} - \numhvelNL{n}} \leq \frac 32\overbar\dens ||\numvel^n-\numhvelNL{n}||_2^2 +  \frac{\overbar{\vel}^2}{2\overbar\dens} ||\numdens^n - \numhdens^n||_2^2 +\frac{C^2\ell^2 \tau^2}{4\alpha} + \alpha||\numvel^n - \numhvelNL{n}||_2^2.
	\end{equation}	
	The constant $C$ is independent of $n$, $\tStep$ and $\xStep$.
\end{lemma}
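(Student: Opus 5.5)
The plan is to exploit the fact that $\ddt(\numnM^n-\numhM^n)$ is, up to a residual, the mass flux difference $\numflow^n-\numhflow^n$. By definition $\ddt\numnM^n(x) = -\int_0^x \ddt\numdens^n\dd{s}$. From the proof of Lemma \ref{lemma:rel_diss} we have the pointwise identity $\ddt\numdens^n = -\partial_x\numflow^n + \Pi_\xStep\src_1^n$. From the definition \eqref{eqn:res1} of $\res_1^n$ we have $\ddt\numhdens^n = -\partial_x\numhflow^n + \Pi_\xStep\src_1^n + \res_1^n$, where $\res_1^n$ is piecewise constant, or at least in $L^2$, as shown in the proof of Lemma \ref{lemma:res1}. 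Subtracting, the source terms cancel, and integrating from $0$ to $x$ gives
\begin{equation*}
\ddt(\numnM^n-\numhM^n)(x) = (\numflow^n-\numhflow^n)(x) - (\numflow^n-\numhflow^n)(0) + \int_0^x \res_1^n\dd{s}.
\end{equation*}
The boundary term vanishes because both $\numflow^n$ and $\numhflow^n$ equal $\hflow_\partial(t^n)$ at $x=0$: the observer satisfies this strongly, and the interpolant is exact at nodes.

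Next, I would split $\numflow^n-\numhflow^n$ via \eqref{eqn:hflow_split}, but with the roles chosen so that the density weight is bounded above. That is, I would write $\numflow^n-\numhflow^n = \numdens^n(\numvel^n-\numhvelNL{n}) + \numhvelNL{n}(\numdens^n-\numhdens^n)$, or the variant from \eqref{eqn:hflow_split}. The first part gives $\ip{\numdens^n(\numvel^n-\numhvelNL{n})}{\numvel^n-\numhvelNL{n}}\le \overbar\dens\|\numvel^n-\numhvelNL{n}\|_2^2$ by \ref{ass:discrete}. For the second part I would use $|\numhvelNL{n}|\le\overbar\vel$ (Remark \ref{remark:dt_proj_estimates}) together with weighted Young with weight $\overbar\dens$, giving $\frac12\overbar\dens\|\numvel^n-\numhvelNL{n}\|_2^2+\frac{\overbar\vel^2}{2\overbar\dens}\|\numdens^n-\numhdens^n\|_2^2$. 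Together these produce exactly the constants $\frac32\overbar\dens$ and $\frac{\overbar\vel^2}{2\overbar\dens}$ in the statement.

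For the residual contribution I would use Cauchy--Schwarz pointwise, $|\int_0^x\res_1^n\dd{s}|\le \sqrt{\ell}\|\res_1^n\|_2$, so that $\|\int_0^\cdot\res_1^n\|_2\le \ell\|\res_1^n\|_2\le \ell\frac{\tStep}{2}\|\partial_{tt}\hdens\|_{\besov{\infty}{2}}$ by Lemma \ref{lemma:res1}. Weighted Young with $\alpha$ then gives $\frac{C^2\ell^2\tStep^2}{4\alpha}+\alpha\|\numvel^n-\numhvelNL{n}\|_2^2$ with $C=\frac12\|\partial_{tt}\hdens\|_{\besov{\infty}{2}}$, which is independent of $n$, $\tStep$ and $\xStep$ by \ref{ass:unif}.

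The main point needing care is the first step: the identity $\ddt\numdens^n=-\partial_x\numflow^n+\Pi_\xStep\src_1^n$ holds pointwise only because all three terms lie in $Q_\xStep$. For the projected state I must use the commuting property $\partial_x I_\xStep = \Pi_\xStep\partial_x$ (Lemma \ref{lemma:proj_estimates}) to see that $\res_1^n = \ddt\numhdens^n-\partial_t\numhdens^n$ is a genuine $L^2$ function whose integral is well defined. Everything else is a routine application of Young's inequality.
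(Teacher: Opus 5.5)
Your proposal is correct and follows the paper's proof essentially step by step. You derive the same identity $\ddt(\numnM^n-\numhM^n)(x)=(\numflow^n-\numhflow^n)(x)+\int_0^x\res_1^n\,\mathrm{d}s$, use the split \eqref{eqn:hflow_split} with weighted Young to get $\frac32\overbar\dens$ and $\frac{\overbar\vel^2}{2\overbar\dens}$, and bound the residual via Lemma \ref{lemma:res1}; your $L^2$ route for $\int_0^x\res_1^n$ gives the same $\ell^2\tStep^2$ term as the paper's $L^\infty$ route.

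Two small points. The paper's form of the split (weight $\numhdens^n$, coefficient $\numvel^n$) is preferable to your first variant, since it uses the exact bound $|\numvel^n|\le\overbar\vel$ from \ref{ass:discrete} rather than the merely approximate bound on $\numhvelNL{n}$ from Remark \ref{remark:dt_proj_estimates}. Also, $\res_1^n$ is already piecewise constant by its definition \eqref{eqn:res1}, so the commuting property is needed only for the bound on $\|\res_1^n\|_2$, not for $\res_1^n$ to be an $L^2$ function.
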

\begin{proof}
We begin by computing $\ddt(\numnM^n -\numhM^n)$. Let $\chi_{[0,x]}$ denote the indicator function on $[0,x]$.
By definition, 
\begin{equation}
\begin{aligned}
	\ddt(\numnM^n -\numhM^n)(x) &= -\ddt\left( \ip{\numdens^n -\numhdens^n}{\chi_{[0,x]}}\right) = -\ip{\ddt(\numdens^n -\numhdens^n)}{\chi_{[0,x]}} \\
	&= \ip{\partial_x(\numflow^n -\numhflow^n)}{\chi_{[0,x]}} - \ip{\Pi_\xStep \src_1^n - \Pi_\xStep \src_1^n}{\chi_{[0,x]}} + \ip{\res_1^n}{\chi_{[0,x]}} \\
	&=  (\numflow^n - \numhflow^n)(x) - (\numflow^n - \numhflow^n)(0) + \ip{\res_1^n}{\chi_{[0,x]}} \label{eqn:dt_Mdif},
\end{aligned}
\end{equation}

with $(\numflow^n - \numhflow^n)(0) = 0$ due to the boundary condition at $x = 0$.
Using equation \eqref{eqn:hflow_split}, the uniform bounds from \ref{ass:unif} and \ref{ass:discrete}, and Young's inequality gives
\begin{equation}
	\begin{aligned}
			(I) &= \ip{\numhdens^n(\numvel^n - \numhvelNL{n} )}{\numvel^{n} - \numhvelNL{n}} + \ip{ \numvel^n (\numdens^n - \numhdens^n)}{\numvel^{n} - \numhvelNL{n}} + \ip{ \int_0^x\res_1^n(s)\dd{s}}{\numvel^{n} - \numhvelNL{n}}  \\
			&=\frac 32\overbar\dens ||\numvel^n-\numhvelNL{n}||_2^2 +  \frac{\overbar{\vel}^2}{2\overbar\dens} ||\numdens^n - \numhdens^n||_2^2+\ip{ \int_0^x\res_1^n(s)\dd{s}}{\numvel^{n} - \numhvelNL{n}} . 
		\end{aligned}
\end{equation}
	Using Hölder's and Young's inequality with $\alpha \in \symbb{R}^+$ on the last term gives 
	\begin{equation}
		\ip{ \int_0^x\res_1^n(s)\dd{s}}{\numvel^{n} - \numhvelNL{n}} \leq ||\int_0^x\res_1^n(s)\dd{s}||_\infty \ell^{1/2}||\numvel^{n} - \numhvelNL{n}||_2\leq \frac{C^2\ell^2 \tau^2}{4\alpha}+ \alpha||\numvel^n - \numhvelNL{n}||_2^2.
		\label{eqn:res1_disG_est}
	\end{equation}
	The last inequality uses  that 
	\begin{equation}
		||\int_0^x\res_1^n(s)\dd{s}||_\infty \leq ||\res_1^n||_1 \leq \ell^{1/2} ||\res_1^n||_2 \leq C\ell^{1/2} \tStep,
	\end{equation}
	with $C>0$ depending only on the bounds in \ref{ass:unif} and norms of $\hu$, due to estimate \eqref{eqn:res1_norm} from Lemma \ref{lemma:res1}.
	Gathering the previous estimates gives the statement of the lemma.
\end{proof}
\begin{lemma}[Estimate for (III) from Lemma \ref{lemma:ddt_disG}]
	 Let $\delta\in \symbb{R}^+$ be the constant from the modified relative energy \eqref{eqn:mod_rel_energy}.	 Under the assumptions of Lemma \ref{lemma:disG_est_1}, there exist constants $C_1,C_2>0$ such that
	\begin{align}
		- \tStep\ip{\ddt(\numnM^n - \numhM^n )}{\ddt(\numvel^n - \numhvelNL{n})} \leq \frac {\tStep}{2\delta} \underline{C} ||\ddt(\numu^n - \numhu^n)||_2^2 + \delta\tau C_1 ||\numu^n - \numhu^n||_2^2 +\delta C_2{\ell^2 \tStep^3}, \label{eqn:numdiss2}
	\end{align}
	where $\underline{C}$ is the constant from Lemma \ref{lemma:bounded} and $C_1$ and $C_2$ depend  on $\underline{C}$ and the bounds from \ref{ass:unif} and \ref{ass:discrete}, but are independent of $n$, $\tStep$ and $\xStep$.
\end{lemma}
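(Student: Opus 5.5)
The plan is to bound the inner product by Cauchy--Schwarz and then feed the $\tStep$-weighted term into the numerical dissipation from Lemma \ref{lemma:dthp2}, using weighted Young's inequality with weight $\delta$. The key input is the formula \eqref{eqn:dt_Mdif} from the proof of Lemma \ref{lemma:disG_est_1}:
\[
\ddt(\numnM^n-\numhM^n)(x) = (\numflow^n-\numhflow^n)(x) + \int_0^x \res_1^n(s)\dd{s}.
\]
Hence, with Young's inequality with parameter $\delta$ and using $\tStep\,\delta$ as the weight on the first factor,
\[
-\tStep\ip{\ddt(\numnM^n-\numhM^n)}{\ddt(\numvel^n-\numhvelNL{n})} \leq \frac{\tStep}{2\delta}\underline{C}\,||\ddt(\numvel^n-\numhvelNL{n})||_2^2 + \frac{\tStep\delta}{2\underline{C}}||\ddt(\numnM^n-\numhM^n)||_2^2 .
\]
Since $\numvel^n-\numhvelNL{n}$ is the second component of $\numu^n-\numhu^n$, the first term is bounded by $\frac{\tStep}{2\delta}\underline{C}||\ddt(\numu^n-\numhu^n)||_2^2$. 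This is exactly the first term of \eqref{eqn:numdiss2}.

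The second term is treated with the formula above and $(a+b)^2\le 2a^2+2b^2$:
\[
||\ddt(\numnM^n-\numhM^n)||_2^2 \le 2||\numflow^n-\numhflow^n||_2^2 + 2\ell\,\Bigl\|\int_0^x\res_1^n\dd{s}\Bigr\|_\infty^2 .
\]
For the first piece I would use the splitting \eqref{eqn:hflow_split} together with the uniform bounds \ref{ass:unif} and \ref{ass:discrete}. This gives $||\numflow^n-\numhflow^n||_2^2\le 2\max(\overbar\dens,\overbar\vel)^2||\numu^n-\numhu^n||_2^2$, and so contributes $\delta\tStep C_1||\numu^n-\numhu^n||_2^2$ with $C_1$ depending on $\underline C$, $\overbar\dens$ and $\overbar\vel$. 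For the residual piece I would use the bound from the proof of Lemma \ref{lemma:disG_est_1}: by Lemma \ref{lemma:res1}, $||\int_0^x\res_1^n||_\infty\le \ell^{1/2}||\res_1^n||_2\le C\ell^{1/2}\tStep$. The contribution is therefore $\frac{\tStep\delta}{2\underline C}\cdot 2\ell\cdot C^2\ell\tStep^2 = \delta C_2\ell^2\tStep^3$, which has the claimed form.

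There is no real analytic obstacle here; the only delicate point is choosing the Young weight. The weight must produce the factor $\frac{1}{\delta}$ on the dissipation term. This matters because the whole estimate is later multiplied by $\delta$ in $\ddt(\hamil+\delta\disG)$, and it then yields $\frac{\tStep}{2}\underline C||\ddt(\numu^n-\numhu^n)||_2^2$, which can be absorbed by the numerical dissipation $-\frac12\tStep\underline C||\ddt(\numu^n-\numhu^n)||_2^2$ from Lemma \ref{lemma:dthp2}. All remaining terms then carry a factor $\tStep$ and are harmless. The constants depend only on $\underline C$, the bounds from \ref{ass:unif} and \ref{ass:discrete}, and $||\partial_{tt}\hdens||_{\besov{\infty}{2}}$, so they are independent of $n$, $\tStep$ and $\xStep$.
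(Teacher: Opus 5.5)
Your proposal is correct and is essentially the paper's argument: both use the identity $\ddt(\numnM^n-\numhM^n)=(\numflow^n-\numhflow^n)+\int_0^x\res_1^n\dd{s}$, the splitting \eqref{eqn:hflow_split}, the bound $\|\int_0^x\res_1^n\dd{s}\|_\infty\le C\ell^{1/2}\tStep$, and a $\delta$-weighted Young inequality tuned so that the dissipation term carries exactly $\frac{\tStep}{2\delta}\underline{C}$. The only difference is bookkeeping. You apply one weighted Young inequality to the whole product and then split $\|\ddt(\numnM^n-\numhM^n)\|_2^2$ via $(a+b)^2\le 2a^2+2b^2$. The paper instead splits the inner product first, applies Young twice with parameter $\alpha$, and then sets $\alpha=\frac14\underline{C}$; both routes give the same bound.
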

\begin{proof}
	Using 
	$$
	\ddt(\numnM^n -\numhM^n)(x) = (\numflow^n - \numhflow^n)(x)+ \int_0^x\res_1^n(s)\dd{s}\
	$$
	from \eqref{eqn:dt_Mdif}, we get 
	\begin{equation}
		\begin{aligned}
		-\tStep \ip{\ddt(\numnM^n - \numhM^n )}{\ddt(\numvel^n - \numhvelNL{n})} = &-\tau\ip{\numflow^n -\numhflow^n}{\ddt(\numvel^n - \numhvelNL{n})} \\&-\tau \ip{ \int_0^x\res_1^n(s)\dd{s}}{\ddt(\numvel^n - \numhvelNL{n})}.
		\label{eqn:ddtM_numDiss}
		\end{aligned}
	\end{equation}
	 By Hölder's  and Young's inequality, for any  $\alpha \in \symbb{R}^+$, we get 
	\begin{equation}
		\begin{aligned}
			-\tStep \ip{\numflow^n - \numhflow^n}{\ddt(\numvel^n - \numhvelNL{n})} &\leq \tStep||\numflow^n - \numhflow^n||_2 ||\ddt(\numvel^n - \numhvelNL{n})||_2 \\
			&\leq \delta\frac{\tStep}{4\alpha} ||\numflow^n - \numhflow^n||_2^2 + \frac{\alpha\tStep}{\delta}||\ddt(\numvel^n - \numhvelNL{n})||_2^2,
		\end{aligned}
	\end{equation}
	with $\delta$ the constant from the modified relative energy \eqref{eqn:mod_rel_energy}. Using \eqref{eqn:hflow_split}, we have\\ $||\numflow^n - \numhflow^n||_2^2 \leq C||\numu^n -\numhu^n||_2^2$.
	The second term in \eqref{eqn:ddtM_numDiss} is estimated  using Young's inequality, giving
	\begin{align}
		-\tStep \ip{ \int_0^x\res_1^n(s)\dd{s}}{\ddt(\numvel^n - \numhvel^n)} \leq \tStep\delta\frac{C^2\ell^2 \tStep^2}{4\alpha} + \frac{\alpha \tStep}{\delta}||\ddt(\numvel^n - \numhvelNL{n})||_2^2,
	\end{align}
	similar to equation \eqref{eqn:res1_disG_est} from the proof of Lemma \ref{lemma:ddt_disG}.
	
	Recalling Lemma \ref{lemma:dthp2}, the estimates of the time derivative of $\hamil(\numu^n ,\numhu^n)$ give numerical dissipation of $-\tStep\frac 12\underline{C}||\ddt(\numu^n-\numhu^n)||_2^2$ with $\underline{C}$ from Lemma \ref{lemma:bounded} depending on the bounds from \ref{ass:unif} and \ref{ass:convexity}. Estimating $||\ddt(\numvel^n -\numhvelNL{n})||_2 \leq ||\ddt(\numu^n - \numhu^n) ||_2$, we choose $\alpha = \frac 14 \underline{C}$ such that the $\frac{\alpha \tStep}{\delta}||\ddt(\numvel^n - \numhvelNL{n})||_2^2$-- terms can be absorbed by the remaining numerical dissipation, when adding up all the estimates for the modified relative energy $\hamil(\numu^n ,\numhu^n) + \delta \disG(\numu^n ,\numhu^n)$.
\end{proof}
\begin{lemma}[Estimate for (II) from Lemma \ref{lemma:ddt_disG}]
	\label{lemma:ddt_disG_II}  Let the assumptions from Lemma \ref{lemma:disG_est_1} hold. Further, let Assumption \ref{ass:meas_err} hold. Then, there exists a constant $C>0$ independent $n$, $\tStep$ and $\xStep$, such that for any $\alpha \in \symbb{R}^+$, it holds that 
\begin{equation}
	\begin{aligned}\ip{M^n - \hM^n }{\ddt(\numvel^n - \numhvelNL{n})}\leq& \left(-\frac 12\underline{C}_{\ppot''}+ \frac{\overbar{\vel}^2}{2\underline{\dens}} + \alpha\right)||\numdens^n -\numhdens^n||_2^2 
		\\&+\left(\frac 12 \underline{\dens} +  \frac{\cpoin^2 \ell^2}{\underline{C}_{P''}}(\mu^2 + \gamma^24\overbar{\vel}^2) \right)||\numvel^n - \numhvelNL{n}||_2^2\\
		&+ \frac{C^2 \xStep^2}{4\alpha}+\frac{\mu^2 C^2}{\alpha}||\error||_{\besov{\infty}{2}}^2 - \ip{\res_2^n}{\numnM^n - \numhM^n}.
	\end{aligned}
\end{equation}
\end{lemma}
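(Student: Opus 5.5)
The plan is to mirror the continuous computation from \cite{kunkel_obs}. We use the discrete momentum equation \eqref{eqn:dis_scheme_2} to trade $\ddt\numvel^n$ for a spatial derivative of the enthalpy, and the residual \eqref{eqn:res2} to do the same for $\ddt(\numhvelNL{n})$. This requires the test function $r=\numnM^n-\numhM^n$ to lie in $R_\xStep$. Since $\numdens^n,\numhdens^n\in Q_\xStep$ are piecewise constant, their antiderivatives are continuous and piecewise linear, and both vanish at $x=0$. Hence $r\in R_\xStep$ is admissible in \eqref{eqn:dis_scheme_2}, and also in \eqref{eqn:res2} since $R_\xStep\subset H^1$. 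Subtracting the two identities gives
\begin{align*}
\ip{\numnM^n-\numhM^n}{\ddt(\numvel^n-\numhvelNL{n})} ={}& \ip{\numEnthalp^n-\numhEnthalp^n}{\partial_x(\numnM^n-\numhM^n)} - \gamma\ip{|\numvel^n|\numvel^n-|\numhvelNL{n}|\numhvelNL{n}}{\numnM^n-\numhM^n}\\
&- \mu\ip{\numvel^n-\numhvel^n-\numerror^n}{\numnM^n-\numhM^n} - \ip{\res_2^n}{\numnM^n-\numhM^n}.
\end{align*}
The source terms cancel. The boundary terms also cancel, because $r(0)=0$ and the enthalpy boundary datum at $x=\ell$ is the same in both equations.

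The key term is the enthalpy term. Since $\partial_x(\numnM^n-\numhM^n)=-(\numdens^n-\numhdens^n)$, we get $-\ip{\numEnthalp^n-\numhEnthalp^n}{\numdens^n-\numhdens^n}$. We split the enthalpy difference as $\ppot'(\numdens^n)-\ppot'(\numhdens^n)+\tfrac12(\numvel^n-\numhvelNL{n})(\numvel^n+\numhvelNL{n})$.
\begin{itemize}
\item By the mean value theorem and \ref{ass:convexity}, the pressure part yields $-\underline{C}_{\ppot''}||\numdens^n-\numhdens^n||_2^2$.
\item The kinetic part is bounded by $\overbar{\vel}\,|\numvel^n-\numhvelNL{n}|\,|\numdens^n-\numhdens^n|$, using \ref{ass:discrete} and Remark \ref{remark:dt_proj_estimates}.
\item Young's inequality then gives $\frac{\overbar{\vel}^2}{2\underline\dens}||\numdens^n-\numhdens^n||_2^2+\frac12\underline\dens||\numvel^n-\numhvelNL{n}||_2^2$.
\end{itemize}
This is where the $\frac12\underline\dens$ and $\frac{\overbar\vel^2}{2\underline\dens}$ coefficients in the statement come from.

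For the friction and nudging terms, we use Poincaré's inequality as in Lemma \ref{lemma:mod_norm_eqiv}: $||\numnM^n-\numhM^n||_2\le\cpoin\ell||\numdens^n-\numhdens^n||_2$.
\begin{itemize}
\item By \eqref{eqn:fric_est}, the friction term is bounded pointwise by $2\overbar\vel\gamma|\numvel^n-\numhvelNL{n}|\,|\numnM^n-\numhM^n|$.
\item In the nudging term, $\numvel^n-\numhvel^n$ splits as $(\numvel^n-\numhvelNL{n})+(\numhvelNL{n}-\numhvel^n)$.
\item On the products involving $\numvel^n-\numhvelNL{n}$, Young's inequality with weight $\frac14\underline{C}_{\ppot''}$ on the $\rho$-side produces $\frac{\cpoin^2\ell^2}{\underline C_{\ppot''}}(\mu^2+4\gamma^2\overbar\vel^2)||\numvel^n-\numhvelNL{n}||_2^2$. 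It consumes at most $\frac12\underline C_{\ppot''}||\numdens^n-\numhdens^n||_2^2$, which explains why only $-\frac12\underline C_{\ppot''}$ survives.
\item The projection mismatch $\numhvelNL{n}-\numhvel^n$ and the measurement error $\numerror^n$ are handled by Young's inequality with $\alpha$. Lemma \ref{lemma:dt_proj_estimates} gives the $\frac{C^2\xStep^2}{4\alpha}$ term, and $||\numerror^n||_2\le C||\error||_{\besov{\infty}{2}}$ (stability of $I_\xStep$ under \ref{ass:meas_err}) gives the $\frac{\mu^2C^2}{\alpha}||\error||^2$ term. The price is $\alpha||\numdens^n-\numhdens^n||_2^2$, after absorbing $\cpoin\ell$ into $C$.
\end{itemize}
The residual term is simply kept, to be treated in a subsequent lemma.

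The main obstacle is verifying that $\numnM^n-\numhM^n$ is a legitimate test function in \eqref{eqn:dis_scheme_2}. This rests on the vanishing at $x=0$ and the continuity of the antiderivative. A secondary task is bookkeeping the Young weights so that exactly $-\frac12\underline{C}_{\ppot''}$ remains.
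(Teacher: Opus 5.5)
Your proposal is correct and follows the paper's proof essentially step by step. You test \eqref{eqn:dis_scheme_2} and the residual \eqref{eqn:res2} with $\numnM^n-\numhM^n\in R_\xStep$, extract $-\underline{C}_{\ppot''}\|\numdens^n-\numhdens^n\|_2^2$ from the enthalpy term, and spend $\frac14\underline{C}_{\ppot''}$ each on the nudging and friction terms via Poincar\'e and Young, exactly as the paper does.

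One caveat, which applies equally to the paper's proof: the nodal interpolant $I_\xStep$ is stable in $L^\infty$ but not in $L^2$. So \ref{ass:meas_err} does not by itself justify $\|\numerror^n\|_2\le C\|\error\|_{\besov{\infty}{2}}$. What it does give is $\|\numerror^n\|_2\le \ell^{1/2}\|\error\|_{L^\infty((0,\infty)\times\xDom)}$; alternatively, you can simply keep $\|\numerror^n\|_2$ in the estimate.
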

\begin{proof}
	 Note that by definition, $\numnM^n -\numhM^n\in R_\xStep $, so we can use it as a test function for the discrete formulation \eqref{eqn:dis_scheme_2}. Using the definition of the residual $\res_2^n$,  we get
\begin{equation}
	\label{eqn:enthalp_boundary}
	\begin{aligned}
		(II) &= \ip{\partial_x(\numnM^n - \numhM^n)}{\numEnthalp^n - \numhEnthalp^n} - (\Enthalp_\partial^n - \hEnthalp_\partial^n)(\numnM^n - \numhM^n)\biggl\lvert_{x=0}^{x=\ell}- \mu\ip{\numvel^n - \numhvel^n}{\numnM^n - \numhM^n}\\
		& \quad +\mu\ip{\numerror^n}{\numnM^n - \numhM^n}- \gamma\ip{|\numvel^n|\numvel^n - |\numhvelNL{n}|\numhvelNL{n}}{\numnM^n - \numhM^n} - \ip{\res_2^n}{\numnM^n - \numhM^n} \\ 
		&\eqqcolon (A) + 0 + (B) + (C) + (D) + (E).
	\end{aligned}
\end{equation}
	At $x = \ell$ we have $\hEnthalp_\partial^n = \Enthalp_\partial^n$ as a boundary condition. Further, at $x = 0$, we have ${\numnM^n = \numhM^n  = 0}$. Thus, the boundary contribution vanishes.
	 The residual term  $(E)$ already appears in the statement of the lemma, and will be estimated separately later.
	
	Now, we consider the remaining terms individually.	
	 For the first term, using the definition of $\numnM^n$ and $\numhM^n$, we get 
	\begin{align}
		(A) = -\ip{\numdens^n - \numhdens^n}{\numEnthalp^n - \numhEnthalp^n}.
	\end{align}
Inserting the definitions of $\numEnthalp$ and $\numhEnthalp$, expanding $\ppot'$ using Taylor's theorem and the convexity of the pressure potential $\ppot$ gives 
\begin{equation}
	\begin{aligned}
		-\ip{\numdens^n - \numhdens^n}{\numEnthalp^n - \numhEnthalp^n} &= -\ip{\numdens^n - \numhdens^n}{\frac12((\numvel^n)^2 - (\numhvelNL{n})^2))+ \ppot'(\numdens^n) - \ppot'(\numhdens^n)} \\
		&\leq -\underline{C}_{\ppot''}||\numdens^n -\numhdens^n||_2^2 - \frac 12\ip{\numdens^n - \numhdens^n}{(\numvel^n - \numhvelNL{n} )(\numvel^n + \numhvelNL{n} )} \\
		&\leq \left(-\underline{C}_{\ppot''}+\frac{\overbar{\vel}^2}{2\underline{\dens}}\right)||\numdens^n -\numhdens^n||_2^2 + \frac 12 \underline{\dens}||\numvel^n - \numhvelNL{n}||_2^2.
	\end{aligned}
\end{equation}
	For the last inequality, we use Lemma \ref{lemma:dt_proj_estimates} and Remark \ref{remark:dt_proj_estimates}.
	Next, we estimate the nudging term $(B)$. Recall that $|| \numhvelNL{n}-\numhvel^n||_2 \leq C \xStep$ due to Lemma \ref{lemma:dt_proj_estimates}.
 Let $\alpha \in \symbb{R}^+$. Using the Poincaré's and Young's inequality, we get 
 \begin{equation}
 	\begin{aligned}
 		(B) &\leq \mu \cpoin \ell ||\partial_x(\numnM^n - \numhM^n)||_2\left(||\numvel^n - \numhvelNL{n}||_2 + || \numhvelNL{n}-\numhvel^n||_2\right)\\
 		&\leq \left(\frac{1}{4}\underline{C}_{P''}+\alpha\right)||\numdens^n -\numhdens^n||_2^2 + \mu^2 \cpoin^2 \ell^2 \frac{1}{\underline{C}_{P''}}||\numvel^n - \numhvelNL{n}||_2^2 + \frac{C^2 \cpoin^2 \ell^2\mu^2}{\alpha}\xStep^2.
 	\end{aligned}
 \end{equation}
	
	Then we estimate the error term $(C)$. Again, applying Poincaré's, Hölder's and Young's in\-equality gives
	\begin{equation}
		\mu\ip{\numerror^n}{\numnM^n - \numhM^n} \leq \mu\cpoin \ell||\numerror^n||_2||\numdens - \numhdens||_2 \leq \frac{\mu^2 C^2}{\alpha}||\error||_{\besov{\infty}{2}}^2 + \alpha ||\numdens - \numhdens||_2^2.
	\end{equation}
	
	Lastly, we estimate the friction terms.
	Applying Cauchy-Schwarz, Poincaré's inequality and estimate \eqref{eqn:fric_est} we have 
\begin{equation}
	\begin{aligned}
		(D) &\leq \cpoin \ell \gamma 2\overbar{\vel} ||\partial_x(\numnM^n - \numhM^n )||_2 ||\numvel^n - \numhvelNL{n}||_2 \\
		&\leq \frac{1}{4}\underline{C}_{\ppot''}||\numdens^n - \numhdens^n||_2^2 + \gamma^2 \cpoin^2 \ell^2 \frac{4\overbar{\vel}^2}{\underline{C}_{\ppot''} }||\numvel^n - \numhvelNL{n}||_2^2.
	\end{aligned}
\end{equation}
	\end{proof}
	
	Finally, we need to estimate the remaining PDE residual term. This will be done in the next lemma: 
	\begin{lemma}  Let $\numhu^n$ be the projected original system state given by \eqref{def:proj_state}. Further, let assumptions \ref{ass:unif}, \ref{ass:discrete} and \ref{ass:meas_err} hold. Then, there exists a constant $C>0$, such that for any  $\alpha \in \symbb{R}^+$, it holds that
		\begin{align}
			- \ip{\res_2^n}{\numnM^n - \numhM^n} \leq \alpha\left(1 + \cpoin \ell (2 + \gamma)\right) ||\numdens^n - \numhdens^n||_2^2 + \frac{C}{\alpha}(\tStep^2 + \xStep^2).
		\end{align}
		The constant $C>0$ depends only on the uniform bounds in the assumptions and the original system state $\hu$.
	\end{lemma}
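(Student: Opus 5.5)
The plan is to mimic the proof of Lemma \ref{lemma:res2h}, using that $\numnM^n-\numhM^n$ lies in $R_\xStep\subset H^1(\xDom)$ and vanishes at $x=0$. The weak formulation of the original system holds for every $r\in H^1(\xDom)$, so we test it with $r=\numnM^n-\numhM^n$ and subtract it from $\ip{\res_2^n}{r}$. This gives
\begin{equation*}
\begin{aligned}
-\ip{\res_2^n}{\numnM^n-\numhM^n} ={}& \ip{\partial_t\hvel^n-\ddt(\numhvelNL{n})}{\numnM^n-\numhM^n} - \ip{\hEnthalp^n-\numhEnthalp^n}{\partial_x(\numnM^n-\numhM^n)}\\
&+\gamma\ip{\abs{\hvel^n}\hvel^n-\abs{\numhvelNL{n}}\numhvelNL{n}}{\numnM^n-\numhM^n}.
\end{aligned}
\end{equation*}
The source terms cancel exactly. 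The boundary terms vanish because $\numnM^n=\numhM^n=0$ at $x=0$, and because both formulations carry the same datum $\hEnthalp_\partial^n$ at $x=\ell$.

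The key tool is Poincaré's inequality together with $\partial_x(\numnM^n-\numhM^n)=-(\numdens^n-\numhdens^n)$, which gives
\[
\|\numnM^n-\numhM^n\|_2\le \cpoin\ell\,\|\numdens^n-\numhdens^n\|_2 .
\]
This is why only $\|\numdens^n-\numhdens^n\|_2^2$ appears on the right-hand side. The three terms are then handled as follows.

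\textbf{First term.} Cauchy--Schwarz and Poincaré bound it by $\cpoin\ell\,\|\partial_t\hvel^n-\ddt(\numhvelNL{n})\|_2\,\|\numdens^n-\numhdens^n\|_2$. Lemma \ref{lemma:dt_proj_estimates} gives $\|\partial_t\hvel^n-\ddt(\numhvelNL{n})\|_2\le\tilde C_2(\xStep+\tStep)$. Weighted Young then yields $\alpha\cpoin\ell\|\numdens^n-\numhdens^n\|_2^2+\frac{C}{\alpha}(\xStep^2+\tStep^2)$.

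\textbf{Second term.} This equals $\ip{\hEnthalp^n-\numhEnthalp^n}{\numdens^n-\numhdens^n}$. By \eqref{eqn:enthalp_proj_est}, $\|\hEnthalp^n-\numhEnthalp^n\|_2\le C\xStep$, so Young gives $\alpha\|\numdens^n-\numhdens^n\|_2^2+\frac{C}{\alpha}\xStep^2$. No telescoping term is needed here, in contrast to Lemma \ref{lemma:res2h}.

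\textbf{Friction term.} Using \eqref{eqn:fric_est}, the pointwise difference is bounded by $(|\hvel^n|+|\numhvelNL{n}|)\,|\hvel^n-\numhvelNL{n}|\le 2\overbar{\vel}\,|\hvel^n-\numhvelNL{n}|$, with the bound on $\numhvelNL{n}$ taken from Remark \ref{remark:dt_proj_estimates}. Lemma \ref{lemma:dt_proj_estimates} then gives an $L^2$ norm of at most $C\xStep$. Poincaré and Young produce $\alpha\gamma\cpoin\ell\|\numdens^n-\numhdens^n\|_2^2+\frac{C}{\alpha}\xStep^2$.

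Adding the three bounds gives a coefficient $\alpha(1+\cpoin\ell(1+\gamma))$, which is dominated by the stated $\alpha(1+\cpoin\ell(2+\gamma))$. The remaining contributions are absorbed into $\frac{C}{\alpha}(\tStep^2+\xStep^2)$ after renaming constants; for small $\xStep$ and $\tStep$ any higher powers are absorbed as well.

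The main obstacle is justifying the first term's consistency bound uniformly in $n$. This is exactly Lemma \ref{lemma:dt_proj_estimates}, so it relies on the $W^{2,\infty}$-in-time regularity in \ref{ass:unif}. A second point to check is that testing the continuous weak form with $\numnM^n-\numhM^n$ is legitimate; this holds because the function lies in $H^1$, is a valid test function for the scheme, and has vanishing trace at $x=0$. No bound on $\numvel^n$ enters, so \ref{ass:discrete} is needed only through the setting of the lemma.
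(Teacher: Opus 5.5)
Your proposal is correct and follows the paper's proof essentially step by step. Both test the continuous weak form with $\numnM^n-\numhM^n$, split the result into the time-consistency, enthalpy-projection and friction terms, and bound each one via Poincaré's inequality, Lemma \ref{lemma:dt_proj_estimates}, \eqref{eqn:enthalp_proj_est}, \eqref{eqn:fric_est} and weighted Young. Your observation that the three contributions sum to $\alpha(1+\cpoin\ell(1+\gamma))$, which is dominated by the stated coefficient, also agrees with what the paper's own computation yields.
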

\begin{proof}
	We recall again that 
		$$
	0 = \ip{\partial_t\hvel^n}{r} - \ip{\hEnthalp^n}{\partial_x r} + \gamma \ip{\abs{\hvel^n}\hvel^n}{r} + \hEnthalp^n_\partial r\biggl\lvert_{x=0}^{x=\ell} - \ip{\src_2^n}{r},
	$$
	for any $r \in H^1(\xDom)$. We chose $r = \numnM^n - \numhM^n$, and add this to $- \ip{\res_2^n}{\numnM^n - \numhM^n} $, getting
	\begin{equation}
	\begin{aligned}
		- \ip{\res_2^n}{\numnM^n - \numhM^n}  =& \ip{\partial_t\hvel^n - \ddt(\numhvelNL{n})}{\numnM^n - \numhM^n} - \ip{\hEnthalp^n - \numhEnthalp^n}{\partial_x (\numnM^n - \numhM^n)} \\
		&+ \gamma \ip{\abs{\hvel^n}\hvel^n - \abs{\numhvelNL{n}}\numhvelNL{n}}{\numnM^n - \numhM^n} + \ip{\src_2^n - \src_2^n}{\numnM^n - \numhM^n}\\
		& + (\hEnthalp_{\partial}^n-\hEnthalp_{\partial}^n)(\numnM^n - \numhM^n)\biggl\lvert_{x=0}^{x=\ell}\eqqcolon (\tilde A) + (\tilde B) + (\tilde C)+ 0 + 0.
	\end{aligned}
	\end{equation}
	The boundary terms vanish by definition of the functions $\numnM,\numhM$.
	Next, we estimate $(\tilde A)$, getting
	\begin{align}
		(\tilde A) = \ip{\partial_t\hvel^n - \ddt(\numhvelNL{n})}{\numnM^n - \numhM^n} \leq \cpoin \ell ||\partial_t\hvel^n - \ddt(\numhvelNL{n})||_2 ||\partial_x(\numnM^n - \numhM^n)||_2.
	\end{align}
	Using Lemma \ref{lemma:dt_proj_estimates} and
	applying Young's inequality, gives
	\begin{align}
	  ||\partial_t\hvel^n - \ddt(\numhvelNL{n})||_2 ||\partial_x(\numnM^n - \numhM^n)||_2 &\leq   {\alpha}||\numdens^n - \numhdens^n||_2^2 + \frac{C^2 }{4\alpha}(\tStep^2 + \xStep^2)
	\end{align}
	for any $\alpha \in \symbb{R}^+$.
	For the term $(\tilde B)$, we use estimate \eqref{eqn:enthalp_proj_est} to bound the projection error $||\hEnthalp^n - \numhEnthalp^n||_2 \leq C \xStep$. Then, for any $\alpha \in \symbb{R}^+$,  by weighted Young's inequality we obtain
	\begin{align}
		\ip{\hEnthalp^n - \numhEnthalp^n}{\partial_x (\numnM^n - \numhM^n)}  \leq \alpha||\numdens^n - \numhdens^n||_2^2 + \frac{C}{\alpha}(\tStep^2 + \xStep^2),
	\end{align}
	 with $C$ depending only on the bounds in \ref{ass:unif}.

	Then, we consider the friction term $(\tilde C)$ and use estimate \eqref{eqn:fric_est}. Together with the uniform bounds \ref{ass:unif} and Lemma \ref{lemma:dt_proj_estimates}, we have 
	$$
	||\,\abs{\hvel^n}\hvel^n - \abs{\numhvelNL{n}}\numhvelNL{n}\,||_2 \leq 2\overbar{\vel} ||\hvel^n -\numhvelNL{n}||_2 \leq C\xStep .
	$$
		For any $\alpha \in \symbb{R}^+$, this gives the estimate 
	\begin{equation}
	\begin{aligned}
		 \gamma \ip{\abs{\hvel^n}\hvel^n - \abs{\numhvelNL{n}}\numhvelNL{n}}{\numnM^n - \numhM^n} &\leq\gamma\cpoin \ell ||\,\abs{\hvel^n}\hvel^n - \abs{\numhvelNL{n}}\numhvelNL{n}\,||_2 ||\partial_x (\numnM^n - \numhM^n)||_2 \\
		 &\leq \gamma\cpoin\ell \frac{C^2\xStep^2 }{\alpha}+  \gamma\cpoin \ell \alpha||\numdens^n - \numhdens^n||_2^2 
	\end{aligned}
\end{equation}

\end{proof}
\begin{remark}[Boundary Conditions]
	\label{rem:boundary_conds}
	The proofs of Lemma \ref{lemma:disG_est_1} and Lemma \ref{lemma:ddt_disG_II} show why we need a boundary condition on $\hEnthalp_\partial$ at one end of the pipe and a boundary condition on $\hflow_\partial$ at the other end. In equation \eqref{eqn:dt_Mdif}, we get the term $\numflow^n(0) -\numhflow^n(0)$. This term must be made to vanish, because we have no other suitable estimates to control it.
	
	Similarly, the boundary term in equation \eqref{eqn:enthalp_boundary} has to vanish, but we cannot guarantee ${\numnM^n(\ell) - \numhM^n(\ell) = 0} $, we require the boundary condition on the enthalpy instead.
\end{remark}
We summarize the estimates of the previous lemmas in the following result: 
\begin{theorem}[Estimate of $\ddt \disG(\numu^n,\numhu^n)$]
		Let $\numhu^n$ be the projected original system state given by \eqref{def:proj_state} and let $\numu^n$ be a solution of system \eqref{eqn:dis_scheme_1} -- \eqref{eqn:dis_scheme_2} and $\delta$ the constant from the modified relative energy \eqref{eqn:mod_rel_energy}. Let $0 < \tStep \leq \tStep_0$ and $0 < \xStep \leq \xStep_0$. Under Assumptions \ref{ass:unif} --\ref{ass:subsonic}, \ref{ass:discrete} and \ref{ass:meas_err}, there exist constants $C_1,C_2,C_3> 0$ such that for any $\alpha \in \symbb{R}^+$, the following estimate holds:
		\begin{equation}
			\begin{aligned}
				\ddt \disG(\numu^n,\numhu^n) \leq& \left(-\frac 12\underline{C}_{\ppot''}+ \frac{\overbar{\vel}^2}{2\overbar{\dens}} +\frac{\overbar{\vel}^2}{2\underbar{\dens}}+ \alpha\right)||\numdens^n -\numhdens^n||_2^2 
				\\&+\left(\frac 32 \overbar{\dens} + \frac 12 \underbar{\dens} +  \frac{\cpoin^2 \ell^2}{\underline{C}_{P''}}(\mu^2 + \gamma^24\overbar{\vel}^2)+\alpha \right)||\numvel^n - \numhvelNL{n}||_2^2\\
				&+ C_1(1 + \frac{1}{\alpha})(\tStep^2 + \xStep^2)+ \frac{\mu^2 C_2}{\alpha}||\error||_{\besov{\infty}{2}}^2 
				\\&+ \frac{\tStep}{2\delta}||\ddt(\numu^n - \numhu^n)||_2^2 + C_3\delta \tStep ||\numu^n - \numhu^n||_2^2
			\end{aligned}
		\end{equation}
		The constants $C_1,C_2$ and $C_3$ are independent of $n$, $\tStep$ and $\xStep$.
\end{theorem}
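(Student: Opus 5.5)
The plan is to combine Lemma \ref{lemma:ddt_disG} with the estimates for its three parts, namely Lemma \ref{lemma:disG_est_1}, the lemma on $(III)$ and Lemma \ref{lemma:ddt_disG_II}, and to finish with the preceding lemma on $-\ip{\res_2^n}{\numnM^n-\numhM^n}$. Because $\ddt\disG = (I)+(II)+(III)$ holds exactly, the task reduces to adding the four upper bounds and then collecting coefficients. A single $\alpha$ is used in every invocation, and afterwards all $\alpha$-multiples are merged into one.

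\textbf{Density coefficient.} Lemma \ref{lemma:disG_est_1} contributes $\frac{\overbar\vel^2}{2\overbar\dens}$ and Lemma \ref{lemma:ddt_disG_II} contributes $-\frac12\underline C_{\ppot''}+\frac{\overbar\vel^2}{2\underline\dens}+\alpha$. The residual lemma adds $\alpha(1+\cpoin\ell(2+\gamma))$. Since $\alpha$ is arbitrary, I rescale it by the fixed factor $2+\cpoin\ell(2+\gamma)$ and rename it $\alpha$. The price is a constant factor on the $1/\alpha$ terms, and those factors are absorbed into $C_1$ and $C_2$.

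\textbf{Velocity coefficient.} Lemma \ref{lemma:disG_est_1} gives $\frac32\overbar\dens+\alpha$ and Lemma \ref{lemma:ddt_disG_II} gives $\frac12\underline\dens+\frac{\cpoin^2\ell^2}{\underline C_{P''}}(\mu^2+4\gamma^2\overbar\vel^2)$. Their sum is exactly the stated coefficient.

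\textbf{Remainder terms.} The consistency terms $\frac{C^2\ell^2\tStep^2}{4\alpha}$, $\frac{C^2\xStep^2}{4\alpha}$ and $\frac{C}{\alpha}(\tStep^2+\xStep^2)$ are collected together with the $\delta C_2\ell^2\tStep^3$ from $(III)$. For $\tStep\le\tStep_0$ one has $\delta C_2\ell^2\tStep^3\le C\tStep^2$, using that $\delta\le c_0/(\cpoin\ell)$ is bounded. All of these then fit into $C_1(1+\frac1\alpha)(\tStep^2+\xStep^2)$. The measurement error term $\frac{\mu^2C^2}{\alpha}\|\error\|^2$ becomes $\frac{\mu^2C_2}{\alpha}\|\error\|^2$.

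\textbf{Lemma on $(III)$.} It supplies $\frac{\tStep}{2\delta}\underline C\|\ddt(\numu^n-\numhu^n)\|_2^2$ and $\delta\tStep C_1\|\numu^n-\numhu^n\|_2^2$, which I rename as in the statement. The $\underline C$ factor is either kept or absorbed; it only matters later, when multiplying by $\delta$ and cancelling against Lemma \ref{lemma:dthp2}.

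\textbf{Main obstacle.} The only delicate point is bookkeeping: every constant must be independent of $n$, $\tStep$ and $\xStep$. I would check this against the hypotheses of each lemma used. Each one requires $\tStep\le\tStep_0$ and $\xStep\le\xStep_0$ through Remark \ref{remark:dt_proj_estimates}. None of them introduces $n$-dependence, because $\disG$ uses the spatial primitive rather than a time integral.

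One sign discrepancy should be noted. Lemma \ref{lemma:ddt_disG_II} as stated has $-\frac12\underline C_{\ppot''}$, while its proof produces $-\underline C_{\ppot''}+\frac14\underline C_{\ppot''}+\frac14\underline C_{\ppot''}=-\frac12\underline C_{\ppot''}$. These agree, so the stated form can be used directly.
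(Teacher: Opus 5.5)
Your proposal is correct and follows the paper's route: the paper gives no separate proof, since the theorem is just the sum of the bounds for $(I)$, $(II)$ (together with the $\res_2^n$ lemma) and $(III)$ from Lemma \ref{lemma:ddt_disG}, and your rescaling of $\alpha$ by $2+\cpoin\ell(2+\gamma)$ is exactly the bookkeeping needed to leave a single $\alpha$ on $\|\numdens^n-\numhdens^n\|_2^2$. The only caveat is that the factor $\underline{C}$ on $\frac{\tStep}{2\delta}\|\ddt(\numu^n-\numhu^n)\|_2^2$ cannot simply be ``absorbed'' unless $\underline{C}\le 1$, so you should either keep it (the later cancellation against Lemma \ref{lemma:dthp2} uses that form) or redo the Young step in the proof for $(III)$ with parameter $\frac14\min(1,\underline{C})$, which yields both forms at the cost of larger $C_1$ and $C_3$.
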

\section{Uniform Convergence Result}
\label{sec:conv_result}
We are now in a position to prove the final uniform convergence result. It essentially remains to show that for any $\mu \in \symbb{R}^+$, one can choose $\delta$ and the additional free parameter $\alpha$ from the various estimates relying on weighted Young's inequalities, such that there exist constants $C,\tilde C>0$ such that
$$
\ddt \left(\hamil(\numu^n,\numhu^n) + \delta \disG(\numu^n,\numhu^n)\right) \leq -C \left(\hamil(\numu^n,\numhu^n) + \delta \disG(\numu^n,\numhu^n)\right) + \tilde C(\tStep^2 + \xStep^2),
$$ 
and $\tilde C$ is independent of $n$.
Such an estimate only holds true for sufficiently small bounds $C_t$ and $\overbar{\vel}$ from Assumption \ref{ass:smallderivs}.
  In particular, one needs to impose a small enough bound on $\overbar{v}$ for Lemma \ref{lemma:ddt_disG_II} to give any dissipation in $||\numdens^n -\numhdens^n||_2^2$. Further, due to the estimate in Lemma \ref{lemma:dth2}, one needs $C_t$ small enough to not dominate the dissipation one gets.

 This is made more precise in the following lemma. After that, it remains to apply a modified version of a discrete Gronwall's lemma to show the convergence result.
\begin{lemma}
	\label{lemma:negtimederiv}
	Let $\numhu^n$ be the projected original system state given by \eqref{def:proj_state} and $\numu^n$ be a solution of \eqref{eqn:dis_scheme_1} -- \eqref{eqn:dis_scheme_2}. Further,  let Assumptions \ref{ass:unif} -- \ref{ass:smallderivs}, \ref{ass:meas_err} and \ref{ass:discrete} hold. Let $\alpha \in \symbb{R}^+,$ $\delta \in \symbb{R}^+$ and the bounds $\overbar{\vel}, C_t>0$ be sufficiently small.
	
	 Then, for any $\mu \in \symbb{R}^+$, there exist $\tau_0, \xStep_0> 0$, a constant  $C_1 = C_1(\alpha,\tStep_0)>0$ and constants $C_2,C_3,C_4> 0$ such that for all $0<\tau \leq \tau_0$ and $0<\xStep \leq \xStep_0$, we have
	 \begin{equation}
		\begin{aligned}
			\ddt(\hamil(\numu^n,\numhu^n) &+ \delta \disG(\numu^n, \numhu^n))\\ 
			\leq &\, -C_1(\alpha,\tStep_0) ||\numu^n - \numhu^n||_2^2 + (\alpha + \tStep_0)C_{2}||\numu^{n-1} - \numhu^{n-1}||_2^2 - \frac 12 \diss(\numu^n,\numhu^n) \\
			&+(1 + \frac{1}{\alpha})(\tStep^2 +\xStep^2)C_3 + \frac{\mu^2 C_4}{\alpha}||\error||_{\besov{\infty}{2}}^2 + \ddt(\ip{\hEnthalp^n - \numhEnthalp^n}{\numdens^n - \numhdens^n}),
		\end{aligned}	
	 \end{equation}
	 The constants $C_1,C_2,C_3$ and $C_4$ are independent of $n$, $\xStep$ and $ \tStep$.
	Further, $\alpha$ and $\tStep_0$ can be chosen small enough, such that
	\begin{equation}
	-C_1(\alpha,\tStep_0) + (\alpha + \tStep_0){C}_2 < 0.
	\end{equation}  
\end{lemma}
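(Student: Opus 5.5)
The plan is to add the estimate for $\ddt\hamil(\numu^n,\numhu^n)$ from the theorem at the end of the previous section to $\delta$ times the estimate for $\ddt\disG(\numu^n,\numhu^n)$, and then fix the parameters in the order $\delta$, then $\overbar{\vel}$ and $C_t$, then $\alpha$ and $\tStep_0$. The sum of the two estimates consists of five groups of terms.

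\begin{itemize}
\item Coefficients of $||\numvel^n-\numhvelNL{n}||_2^2$.
\item Coefficients of $||\numdens^n-\numhdens^n||_2^2$.
\item The numerical dissipation terms in $\ddt(\numu^n-\numhu^n)$.
\item Terms carrying a factor $\alpha$ or $\tStep$, at time levels $n$ and $n-1$.
\item Consistency and measurement terms, together with the telescoping term $\ddt(\ip{\hEnthalp^n - \numhEnthalp^n}{\numdens^n - \numhdens^n})$. These are carried over to the right-hand side unchanged.
\end{itemize}

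The numerical dissipation terms need care. The term $+\frac{\tStep}{2\delta}||\ddt(\numu^n-\numhu^n)||_2^2$ appears multiplied by $\delta$. I will use the sharper version from the proof of the estimate for (III), where $\alpha=\frac14\underline{C}$ is chosen so that the total is bounded by $\frac{\tStep}{2}\underline{C}||\ddt(\numu^n-\numhu^n)||_2^2$. This is then absorbed by the $-\frac12\tStep\underline{C}||\ddt(\numu^n-\numhu^n)||_2^2$ from Lemma~\ref{lemma:dthp2}, so that no positive multiple of $||\ddt(\numu^n-\numhu^n)||_2^2$ survives. The term $C_3\delta\tStep||\numu^n-\numhu^n||_2^2$ joins the $\tStep$-group.

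\textbf{Velocity coefficient.} It is
$$-\tfrac12\mu\underbar\dens+\tfrac12\max(1,\overbar C_{\ppot'''})C_t+\delta\Big(\tfrac32\overbar\dens+\tfrac12\underbar\dens+\tfrac{\cpoin^2\ell^2}{\underline C_{\ppot''}}(\mu^2+4\gamma^2\overbar\vel^2)+\alpha\Big).$$
Given $\mu$, I choose $\delta\le c_0/(\cpoin\ell)$, as required by Lemma~\ref{lemma:mod_norm_eqiv}, and so small that the $\delta$-bracket is at most $\frac18\mu\underbar\dens$. This uses $\overbar\vel\le1$ and $\alpha\le1$, say.

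\textbf{Density coefficient.} It is
$$\tfrac12\mu\tfrac{\overbar\vel^2}{\underbar\dens}+2\gamma\tfrac{\overbar\vel^3}{\underbar\dens}+\tfrac12\max(1,\overbar C_{\ppot'''})C_t+\delta\Big(-\tfrac12\underline C_{\ppot''}+\tfrac{\overbar\vel^2}{2\overbar\dens}+\tfrac{\overbar\vel^2}{2\underbar\dens}+\alpha\Big).$$
This is the main obstacle. The only dissipation in $\dens$ is $-\frac{\delta}{2}\underline C_{\ppot''}$, which is of size $\delta$, with $\delta$ already fixed small in terms of $\mu$. Every positive contribution must therefore be pushed below about $\frac{\delta}{8}\underline C_{\ppot''}$. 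Once $\delta$ is fixed, this is done using Assumption~\ref{ass:smallderivs}: choose $\overbar\vel$ so small that $\frac{\mu\overbar\vel^2}{2\underbar\dens}+\frac{2\gamma\overbar\vel^3}{\underbar\dens}+\delta\overbar\vel^2(\frac1{2\overbar\dens}+\frac1{2\underbar\dens})\le\frac{\delta}{8}\underline C_{\ppot''}$, and choose $C_t$ so small that its term is also at most $\frac{\delta}{8}\underline C_{\ppot''}$. The order matters: $\delta$ depends on $\mu$, then $\overbar\vel$ and $C_t$ depend on $\delta$ and $\mu$. Remark~\ref{remark:dt_proj_estimates} lets me use these same bounds for the projected state once $\tStep_0$ and $\xStep_0$ are small. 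Finally I require $\alpha\le\frac18\underline C_{\ppot''}$.

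\textbf{Assembling the negative constant.} Both coefficients are now at most $-c\,\min(\mu\underbar\dens,\delta\underline C_{\ppot''})$, so their sum gives $-c_\ast||\numu^n-\numhu^n||_2^2$. The remaining level-$n$ terms, $C(\alpha+\tStep)||\numu^n-\numhu^n||_2^2$, are merged into this to give $-C_1(\alpha,\tStep_0)=-c_\ast+C(\alpha+\tStep_0)$. The level-$(n-1)$ terms give $(\alpha+\tStep_0)C_2$. The relative dissipation $-\frac12\diss$ is kept on the right. The $1/\alpha$ weights from the various Young inequalities are collected into $C_3(1+\frac1\alpha)$ and $\frac{\mu^2C_4}{\alpha}$. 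Since $c_\ast$ does not depend on $\alpha$ or $\tStep_0$, taking both small enough gives $-c_\ast+C(\alpha+\tStep_0)+(\alpha+\tStep_0)C_2<0$, which is the final claim of the lemma.
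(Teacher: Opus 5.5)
Your proposal is correct and follows essentially the same route as the paper. The paper also sums the $\ddt\hamil$ and $\delta\,\ddt\disG$ estimates and cancels the numerical dissipation through the fixed choice $\frac14\underline{C}$ in the (III)-estimate. It then picks $\delta$ from the velocity coefficient, picks $\overbar{\vel}$ and $C_t$ from the density coefficient given $\delta$, and absorbs the $\alpha$- and $\tStep$-terms into $C_1(\alpha,\tStep_0)=\min(C_\vel,C_\dens)-(\alpha+\tStep_0)\tilde C_1$.

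One small bookkeeping point: when you fix $C_t$, also require $\frac12\max(1,\overbar{C}_{\ppot'''})C_t\le\frac18\mu\underbar\dens$ (or impose $\delta\underline{C}_{\ppot''}\le\mu\underbar\dens$). The bound $\frac{\delta}{8}\underline{C}_{\ppot''}$ alone does not guarantee that the velocity coefficient stays negative.
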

\begin{proof}
	We begin by gathering all the contributions from the previous estimates, which also occur in the estimates for the continuous observer system and have no free parameter able to make them arbitrarily small.
	This gives 
	\begin{equation}
		\begin{aligned}
			\ddt(\hamil(\numu^n,\numhu^n) &+ \delta \disG(\numu^n, \numhu^n))\\ \leq &\left(- \frac{\mu}{2}\underbar\dens+\frac {3\delta}{2}\overbar\dens +\frac {\delta}2 \underbar{\dens}+\frac{\delta\cpoin^2 \ell^2}{\underline{C}_{P''}}(\mu^2 + 4\gamma^2\overbar{\vel}^2) \right)||\numvel^n-\numhvel^n||_2^2 \\
			&+\left(\frac 12 \mu \frac{\overbar{v}^2}{\underbar{\dens}} +2\gamma \frac{\overbar{v}^3}{\underline{\rho}}+ \delta\frac{\overbar{\vel}^2}{2\overbar\dens} + \delta\frac{\overbar{\vel}^2}{2\underbar\dens}
			-\frac{\delta}{2}\underline{C}_{\ppot''}\right) ||\numdens^n - \numhdens^n||_2^2\\
			&+\frac 12 \max(1,\overbar{C}_{\ppot'''}) \left(||\partial_t\hdens^n||_\infty + ||\partial_t\hvel^n||_\infty\right)|| \numu^n - \numhu^n||_2^2 \\
			&+ \text{relative dissipation, measurement error, $\alpha$, and $\symcal{O}(\tStep^2 + \xStep^2)$ terms.}
		\end{aligned}
		\label{eqn:dissipation_rate}
	\end{equation}
	Note that the numerical dissipation term from eqn. \eqref{eqn:numdiss}, $-\frac 12 \tStep \underline{C} ||\ddt(\numu^n -\numhu^n )||_2$, and the additional term from \eqref{eqn:numdiss2} cancel.
Further, for any fixed $\mu >0$, there exists a constant $C_\vel>0$, depending only on the bounds in \ref{ass:unif}, \ref{ass:discrete} and the parameter $\mu$, such that we can always choose $\delta$  and $C_t =||\partial_t\hdens||_{\besov{\infty}{\infty}} + ||\partial_t\hvel||_{\besov{\infty}{\infty}}$ small enough, that 
\begin{equation}
	\label{eqn:vel_diss_const}
	\left(- \frac{\mu}{2}\underbar\dens+\frac {3\delta}{2}\overbar\dens +\frac {\delta}2 \underbar{\dens}+\frac{\delta\cpoin^2 \ell^2}{\underline{C}_{P''}}(\mu^2 + 4\gamma^2\overbar{\vel}^2) \right)  +\frac 12 \max(1,\overbar{C}_{\ppot'''}) C_t \leq -C_\vel< 0.
\end{equation}
For any fixed $\mu$, given this choice of $\delta$, there exists a constant $C_\dens>0$, such that it is possible to choose $\overbar{v}$ and $C_t$ small enough for the following inequality to hold:
\begin{equation}
	\label{eqn:dens_diss_const}
	\left(\frac 12 \mu \frac{\overbar{v}^2}{\underbar{\dens}} +2\gamma \frac{\overbar{v}^3}{\underline{\rho}}+\delta\frac{\overbar{\vel}^2}{2\overbar\dens} + \delta\frac{\overbar{\vel}^2}{2\underbar\dens}
	-\frac{\delta}{2}\underline{C}_{\ppot''}\right)+\frac 12 \max(1,\overbar{C}_{\ppot'''}) C_t \leq -C_\dens< 0.
\end{equation} The constants $C_\vel$ and $C_\dens$ depend on the uniform bounds in the assumptions \ref{ass:unif} -- \ref{ass:smallderivs} and \ref{ass:discrete}, however, they are independent of $n$ and the discretization parameters $\tStep$ and  $\xStep$.

Gathering all the remaining terms depending on $||\numu - \numhu||_2^2$ at either $t = t^n$ or $t = t^{n-1}$, we get
\begin{align}
(	\text{$\alpha$, $\tStep$ terms} )\leq  (\alpha +  \tStep) \tilde{C}_1 ||\numhu^{n}-\numu^{n}||_2^2 + (\alpha + \tStep)\tilde{C}_2||\numhu^{n-1}-\numu^{n-1}||_2^2,
\end{align}
for some $\tilde{C}_1,\tilde{C}_2>0$ which are independent of $n$.
Finally, collecting all the remaining terms, we have the estimate 
\begin{equation}
	\begin{aligned}
		\ddt(\hamil(\numu^n,\numhu^n) &+ \delta \disG(\numu^n, \numhu^n))\\ 
		\leq &\, -C_1(\alpha,\tStep_0) ||\numu^n - \numhu^n||_2^2 + (\alpha + \tStep_0)\tilde{C}_2||\numu^{n-1} - \numhu^{n-1}||_2^2 - \frac 12 \diss(\numu^n,\numhu^n) \\
		&+C_3(1 + \frac{1}{\alpha})(\tStep^2 +\xStep^2) + \frac{\mu^2 C_4}{\alpha}||\error||_{\besov{\infty}{2}}^2  + \ddt(\ip{\hEnthalp^n - \numhEnthalp^n}{\numdens^n - \numhdens^n})
	\end{aligned}
\end{equation}
 with 
$$
C_1(\alpha,\tStep_0) = \min(C_\vel,C_\dens) -(\alpha + \tStep_0)\tilde{C}_1.
$$
Thus, it is possible to choose $\alpha$ small enough, such that for $\tStep \leq \tStep_0$ sufficiently small, we get $C_1(\alpha,\tStep_0)> 0$  and $-C_1(\alpha,\tStep_0) + (\alpha + \tStep_0)\tilde{C}_2 < 0$.

\end{proof}
\begin{remark}
	\label{rem:conv_rate}
	The previous lemma gives some insight on how to choose $\mu$. When setting the parameter $\mu$, there are three main goals: 
	\begin{itemize}
		\item obtaining a fast exponential decay rate, i.e. having $C_1(\alpha,\tStep_0)$ as large as possible, 
		\item having a wide range of applicability, that is, being able to prescribe large values of $\overbar{v}$ and $C_t$ from Assumption \ref{ass:smallderivs}, and
		\item balancing the amplification of measurement errors.
	\end{itemize}
	Contrary to the intuition one might have, that larger $\mu$ will always provide faster convergence, because the velocity component is nudged more strongly towards to the data, the above proofs indicate that overly large $\mu$ will be suboptimal. In particular, a larger choice of $\mu$ requires a smaller choice of $\delta$ in the estimate \eqref{eqn:vel_diss_const}. A smaller choice of $\delta$ then also needs more restrictive bounds on $\overbar{v}$ and $C_t$ to show estimate \eqref{eqn:dens_diss_const}.
	
	Further, choosing larger values of $\mu$ also amplifies measurement errors more strongly, reducing the maximum possible accuracy of the observer. This is in line with the motivation of using an observer in the first place. In the formal limit of $\mu \to \infty$, the observer system corresponds to setting $\vel =\hvel$ and reconstructing $\hdens$ using the mass conservation equation of the Euler system. This means  the relevant measurement error is the derivative $\partial_x\error$. Typically, measurement errors for the derivatives of some quantity are much larger than those of that quantity itself. In our analysis, we have no regularity assumptions on the derivatives of the measurement error, and thus, our estimate blows up as $\mu \to \infty$.

\end{remark}
Next, we move on to the final convergence result.
\begin{theorem}[Uniform Convergence]
	\label{thm:unif_conv} 
	Under the assumptions of Lemma \ref{lemma:negtimederiv}, let the constants $\alpha$ and $\tStep_0$ be  chosen small enough, such that $C_1C_0^{-1} >(\alpha + \tStep_0) C_{2}c_0^{-1}$. 
	Then, for all $0 < \tStep \leq \tStep_0$ and $0 <\xStep \leq \xStep_0$  with $\tStep_0$ and $\xStep_0$ sufficiently small, it holds that 
	\begin{equation}
		\begin{aligned}
			\frac 12 c_0 ||\numu^n -\numhu^n||_2^2 + c\tStep&\sum_{k=1}^n \left(\prod_{j=k}^{n}{w}_j^{-1}\right) ||\numvel^k -\numhvelNL{k}||_3^3 \\&\leq 2 C_0 \left(\prod_{j=1}^n {w}_j^{-1}\right) ||\numu^0 -\numhu^0||_2^2 + C(\tStep^2 + \xStep^2) + \mu^2 C||\error||_{\besov{\infty}{2}}^2 .
		\end{aligned}
	\end{equation}

	The weights $ w_n$ are defined by
	$$
	{w}_n \coloneqq \frac{2 + \tau C_1 C_0^{-1}}{2 + \tau(\alpha + \tStep_0) C_2 c_0^{-1}},
	$$
with the respective $C_1$ and $C_2$ from Lemma \ref{lemma:negtimederiv}
and
 the constants $c_0, C_0$ and $C$ are independent of $n$, $\tStep$ and $\xStep$.
\end{theorem}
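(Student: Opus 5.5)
Starting from Lemma \ref{lemma:negtimederiv}, I would turn the one-step inequality into a recursion for the modified relative energy and then unroll it with a weighted discrete Gronwall argument. Abbreviate $E^n \coloneqq \hamil(\numu^n,\numhu^n) + \delta\disG(\numu^n,\numhu^n)$ and the telescoping quantity $B^n \coloneqq \ip{\hEnthalp^n - \numhEnthalp^n}{\numdens^n - \numhdens^n}$. Set $\tilde E^n \coloneqq E^n - B^n$, so that $\ddt B^n$ in Lemma \ref{lemma:negtimederiv} is absorbed into the left-hand side. By Lemma \ref{lemma:mod_norm_eqiv} we have $\frac12 c_0\|\numu^n-\numhu^n\|_2^2 \le E^n \le \frac32 C_0\|\numu^n-\numhu^n\|_2^2$. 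For $B^n$, estimate \eqref{eqn:enthalp_proj_est} and Young's inequality give $|B^n| \le \epsilon\|\numu^n-\numhu^n\|_2^2 + C\xStep^2/\epsilon$. Choosing $\epsilon$ a small fraction of $c_0$, $\tilde E^n$ is still equivalent to $\|\numu^n-\numhu^n\|_2^2$ up to an additive $C\xStep^2$. The constants $c_0, C_0$ may be renamed slightly; this is presumably where the factors $\frac12 c_0$ and $2C_0$ in the statement come from.

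Next I would use the norm equivalence to express the right-hand side of Lemma \ref{lemma:negtimederiv} through $\tilde E$. The term $-C_1\|\numu^n-\numhu^n\|_2^2$ is at most $-C_1C_0^{-1}\tilde E^n$, and $(\alpha+\tStep_0)C_2\|\numu^{n-1}-\numhu^{n-1}\|_2^2$ is at most $(\alpha+\tStep_0)C_2c_0^{-1}\tilde E^{n-1}$, in each case up to $\mathcal O(\xStep^2)$ errors. The relative dissipation satisfies $\diss \ge c\|\numvel^n-\numhvelNL{n}\|_3^3$, and I would keep it on the left. Writing $a = C_1C_0^{-1}$ and $b = (\alpha+\tStep_0)C_2c_0^{-1}$, multiplying by $\tStep$ gives
\[
(1+\tStep a)\tilde E^n + c\tStep\|\numvel^n-\numhvelNL{n}\|_3^3 \le (1+\tStep b)\tilde E^{n-1} + \tStep R,
\]
with $R = C(\tStep^2+\xStep^2) + \mu^2C\|\error\|^2_{\besov{\infty}{2}}$, after fixing $\alpha$. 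To match the stated weights exactly, I would instead split the $\tilde E^n$ contribution symmetrically, essentially a Crank--Nicolson-type averaging. Either way the ratio $w_n = (2+\tStep a)/(2+\tStep b)$, or its analogue, is greater than $1$ precisely because $a > b$, which is the hypothesis $C_1C_0^{-1} > (\alpha+\tStep_0)C_2c_0^{-1}$.

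Dividing by the left coefficient and iterating gives
\[
\tilde E^n + c\tStep\sum_{k=1}^n \Bigl(\prod_{j=k}^n w_j^{-1}\Bigr)\|\cdot\|_3^3 \le \Bigl(\prod_{j=1}^n w_j^{-1}\Bigr)\tilde E^0 + \tStep R\sum_{k=1}^n \prod_{j=k}^n w_j^{-1}.
\]
The main point, and what I expect to be the main obstacle, is bounding the last sum uniformly in $n$. Since $w_j = w \ge 1 + \kappa\tStep$ with $\kappa > 0$ independent of $\tStep \le \tStep_0$, the geometric series gives $\tStep\sum_k w^{-(n-k+1)} \le \tStep/(w-1) \le 1/\kappa$, which is independent of $n$. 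This yields the $C(\tStep^2+\xStep^2) + \mu^2C\|\error\|^2$ terms. One must check carefully that the $\mathcal O(\xStep^2)$ errors from $B^n$ and from the conversion between $\tilde E$ and the norm also accumulate only through this geometric sum and not linearly in $n$; folding them into $R$ at each step ensures this.

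Finally, I would bound $\tilde E^0 \le 2C_0\|\numu^0-\numhu^0\|_2^2 + C\xStep^2$ and $\tilde E^n \ge \frac12 c_0\|\numu^n-\numhu^n\|_2^2 - C\xStep^2$, and absorb the $\xStep^2$ terms into the stated bound.
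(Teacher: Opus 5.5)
Your proof is correct, but it organizes the Gronwall step differently from the paper.

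\textbf{Your route.} You absorb the telescoping quantity into the functional, $\tilde E^n=E^n-B^n$. This remains equivalent to $\|\numu^n-\numhu^n\|_2^2$ up to an additive $C\xStep^2$, so a plain recursion with constant ratio $w=(1+\tStep a)/(1+\tStep b)$ suffices. The possible negativity of $\tilde E^n$ is harmless, since $x_n\le q\,x_{n-1}+r_n$ with $q>0$ iterates regardless of sign.

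\textbf{The paper's route.} The paper keeps $a_n=E^n\ge 0$, multiplies by $\prod_{j\le n}w_j$ and sums by parts,
\[
\Bigl(\prod_{j=1}^{n-1}w_j\Bigr)\ddt f_n=-\frac{w_n-1}{\tStep}\Bigl(\prod_{j=1}^{n-1}w_j\Bigr)f_n+\ddt\Bigl(f_n\prod_{j=1}^{n}w_j\Bigr).
\]
It absorbs the first term through $|f_n|\le a_n+Cc_0^{-1}\xStep^2$ and estimates $f_n$ and $f_0$ only after summation; that last step is where $\tfrac12 c_0$ and $2C_0$ come from.

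\textbf{The factor $2$ in $w_n$.} The absorption above doubles the coefficient of $(w_n-1)/\tStep$, and this is the real origin of the $2$'s in $w_n$. No Crank--Nicolson averaging is involved, so drop that remark. Your ratio is in fact larger: $(1+\tStep a)/(1+\tStep b)>(2+\tStep a)/(2+\tStep b)$ for $a>b$. Your route therefore yields the exponent $a-b$ rather than about $(a-b)/2$, avoiding the halving mentioned in the remark after the theorem. To reproduce the stated weights literally, shift $\tilde E^n$ by $K\xStep^2$ to make it nonnegative and then lower the rate.

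\textbf{Uniformity in $n$.} Your geometric-sum bound $\tStep\sum_{k=1}^n\prod_{j=k}^n w_j^{-1}\le\tStep/(w-1)\le 1/\kappa$ is exactly what makes the estimate uniform in $n$. The paper's proof only cites $\prod_{j=k}^n w_j^{-1}\le 1$ at that point. By itself, that would leave a factor $t^n$ in front of the $\tStep^2+\xStep^2$ and measurement-error terms, so on this point your argument is the more careful one.
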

\begin{remark}
	Expanding the weights $ w_n$ in $\tStep$ around $\tStep = 0$ gives $$ w_n = 1 + \frac 12\tStep (C_1C_0^{-1} - (\alpha + \tStep_0)C_2 c_0^{-1}) + \symcal{O}(\tStep^2).$$ Thus, the product $\prod_{j=1}^n{w}_j^{-1}$ is a discrete approximation of $$\exp(-\frac{1}{2}(C_1C_0^{-1} - (\alpha + \tStep_0)C_{2}c_0^{-1})t^n).$$ Recalling the statement of Lemma \ref{lemma:negtimederiv}, one can choose $-C_1C_0^{-1} + (\alpha + \tStep_0)C_{2}c_0^{-1}< 0$ to be arbitrarily close to the continuous decay rate of the modified relative energy as proven in \cite{kunkel_obs}, at the cost of making the constant on the $\symcal{O}(\tStep^2 + \xStep^2)$ term worse. 
	
	Thus, we show exponential convergence with respect to $ ||\numu^0-\numhu^0||_2^2$, analogously to the continuous case, however, with a constant in the exponent that is (about) half as large.
	In the proof of Theorem \ref{thm:unif_conv}, we see that for any $0< \beta< 1$ one can even recover the continuous exponent up to a factor of $(1 - \beta)$, again, at the cost of making the constants of the discretization errors worse.
\end{remark}
\begin{proof}[Proof of Theorem \ref{thm:unif_conv}]
	For brevity, in the following, we set 
	$$
	a_n \coloneqq \hamil(\numu^n,\numhu^n) + \delta \disG(\numu^n,\numhu^n).
	$$
	Further, we set 
	\begin{equation*}
		\begin{aligned}
		&	g_n \coloneqq - \frac 12 \diss(\numu^n,\numhu^n)+ C_3(1 + \frac{1}{\alpha})(\tStep^2 +\xStep^2) +  \frac{\mu^2 C_4}{\alpha}||\error||_{\besov{\infty}{2}}^2\quad \text{and}\\ 
		&f_n \coloneqq \ip{\hEnthalp^n - \numhEnthalp^n}{\numdens^n - \numhdens^n}.
		\end{aligned}
	\end{equation*}
	Additionally, we omit the dependencies of the constants from the previous lemma on the parameters $\alpha$ and $\tStep_0$. We set $\tilde C_2 := (\alpha + \tStep_0) C_2$.
	Using the norm equivalence of the modified relative energy \eqref{eqn:mod_rel_energy}, we have 
	\begin{equation}
		\begin{aligned}
			\ddt a_n  
			\leq \,& -C_1C_0^{-1} a_n + \tilde C_{2}c_0^{-1}a_{n-1} + g_n + \ddt f_n.
		\end{aligned}
	\end{equation}
	At this point, we wish to apply a suitable variant of Gronwall's inequality. Because the term $\ddt(\ip{\hEnthalp^n - \numhEnthalp^n}{\numdens^n - \numhdens^n})$ needs special treatment, we give a proof here.
	
	Let $\tilde{a}_n \coloneqq a_n\prod_{j=1}^n {w}_j$, with weights ${w}_j > 1$, depending on the time step $\tStep$, which are still to be specified in the following steps of the proof. We now compute the discrete time derivative: 
	\begin{equation}
		\begin{aligned}
			\ddt \tilde{a}_n &^= \frac{{w}_n-1}{\tStep}\left(\prod_{j=1}^{n-1}{w}_j\right) a_n + \left(\prod_{j=1}^{n-1}{w}_j\right)  \ddt a_n \\
			&\leq \frac{{w}_n-1}{\tStep}\left(\prod_{j=1}^{n-1}{w}_j\right) a_n + \left(\prod_{j=1}^{n-1}{w}_j\right) \left(-C_1C_0^{-1} a_n +\tilde C_{2}c_0^{-1}a_{n-1}+ g_n + \ddt f_n\right).
		\end{aligned}
	\end{equation}
	Recall that choosing $\alpha$  and $\tStep_0$ small enough, one also has ${-C_1C_0^{-1} +\tilde C_{2}c_0^{-1} < 0}$.		
	Next, we apply discrete integration by parts to the $\ddt f_n$ term: 
	\begin{equation}
		\left(\prod_{j=1}^{n-1}{w}_j\right) \ddt f_n = - \frac{{w}_n-1}{\tStep}\left(\prod_{j=1}^{n-1}{w}_j\right) f_n + \ddt\left(f_n\prod_{j=1}^{n}{w}_j \right).
	\end{equation}
	For $f_n$, by applying Hölder's and weighted Young's inequality with $\beta \in \symbb{R}^+$, we have 
	\begin{equation}
		|f_n| \leq \frac{1}{4\beta}||\hEnthalp^n - \numhEnthalp^n||_2^2 + \beta||\numdens^n - \numhdens^n||_2^2 \leq \frac{C}{\beta} \xStep^2 + \beta c_0^{-1} a_n.
		\label{eqn:ddtfn}
	\end{equation}
	We set $\beta = c_0$ as the constant from the norm equivalence. Thus, overall, we have 
	\begin{align}
		\ddt \tilde{a}_n &\leq 2\frac{{w}_n-1}{\tStep}\left(\prod_{j=1}^{n-1}{w}_j\right) a_n + \left(\prod_{j=1}^{n-1}{w}_j\right) \left(-C_1C_0^{-1} a_n + \tilde C_{2}c_0^{-1}a_{n-1}+ \tilde g_n\right)+\ddt\left(f_n\prod_{j=1}^{n}{w}_j \right),
		\label{eqn:tderiv_atil}
	\end{align}
	where $\tilde g_n \coloneqq g_n + C c_0^{-1}\xStep^2$. We now choose $ w_n$ such that 
	\begin{equation}
		2\frac{{w}_n-1}{\tStep} - C_1C_0^{-1} = -\tilde C_2 c_0^{-1} w_n,
	\end{equation}
	or in other words, ${w}_n \coloneqq\frac{2 + \tau C_1 C_0^{-1}}{2 + \tau \tilde C_2 c_0^{-1}}$. Due to ${C_1C_0^{-1} > \tilde C_{2}c_0^{-1} }$, we have $ w_n>1$, such that this choice is compatible with our previous assumptions.
	
	Inserting this choice of $ w_n$ into \eqref{eqn:tderiv_atil} gives
	\begin{align}
		\ddt \tilde a_n \leq \tilde C_2 c_0^{-1}\left(-\tilde a_n + \tilde a_{n-1}\right) + \left(\prod_{j=1}^{n-1}{w}_j\right)\tilde g_n + \ddt\left(f_n\prod_{j=1}^{n}{w}_j \right),
	\end{align}
	and rearranging leads to 
	\begin{align}
	(1 +  \tilde C_2 c_0^{-1}\tStep)\ddt \tilde a_n \leq  \left(\prod_{j=1}^{n-1}{w}_j\right)\tilde g_n + \ddt\left(f_n\prod_{j=1}^{n}{w}_j \right).
	\end{align}
	Next, multiplying by $\tStep$ and summing up gives
	\begin{align}
		\tilde a_n \leq \tilde a_0 + \frac{1}{1 + \tilde C_{2}c_0^{-1}\tau} \left(\tStep\sum_{k=1}^n \left( \prod_{j=1}^{k-1}{w}_j\right) \tilde{g}_k + f_n\left(\prod_{j=1}^{n}{w}_j\right) - f_0\right).
	\end{align}	
	Finally, dividing both sides by $\prod_{j=1}^n {w}_j$ and the fact that $\tilde a_0 = a_0$ gives 
	\begin{align}
		a_n \leq \left(\prod_{j=1}^n {w}_j^{-1}\right) a_0 + \frac{1}{1 + \tilde C_{2}c_0^{-1}\tau} \left(\tStep\sum_{k=1}^n \prod_{j=k}^{n}{w}_j^{-1} \tilde{g}_k  + f_n - \left(\prod_{j=1}^n {w}_j^{-1}\right)f_0\right).
	\end{align}
	Lastly, estimating $f_n$ and $f_0$ again as in eq. \eqref{eqn:ddtfn}, for suitable choices of $\beta$, we arrive at 
	\begin{align}
	\frac 12	a_n \leq 2 \left(\prod_{j=1}^n {w}_j^{-1}\right) a_0 + \frac{\tStep}{1 + \tilde C_{2}c_0^{-1}\tau} \sum_{k=1}^n \prod_{j=k}^{n}{w}_j^{-1} \tilde{g}_k + C \xStep^2.
	\end{align}
	In particular, $C$ is independent of $n$, $\tStep$ and $\xStep$. Then inserting the definitions of $\tilde{g}_n$, the fact that $\prod_{j=k}^{n}{w}_j^{-1} \leq 1$  and using $\diss(\numu^k,\numhu^k) \geq c ||\numvel^k -\numhvelNL{k}||_3^3$, we obtain
		\begin{align}
		\frac 12 a_n + c\tStep\sum_{k=1}^n \left(\prod_{j=k}^{n}{w}_j^{-1}\right) ||\numvel^k -\numhvelNL{k}||_3^3 \leq 2 \left( \prod_{j=1}^n {w}_j^{-1}\right) a_0 + C(\tStep^2 + \xStep^2) + C||\error||_{\besov{\infty}{2}}^2.
	\end{align}
	Then, using $a_n \geq c_0 ||\numu^n -\numhu^n||_2^2$ and $a_0 \leq C_0 ||\numu^0 -\numhu^0||_2^2$ completes the proof.
\end{proof}
Finally, using the projection error estimates for the original system state, we have the following corollary:
\begin{corollary}
	\label{col:unif_conv}
	Let the assumptions of Theorem \ref{thm:unif_conv} hold. Let $\hu$ be a solution of system \eqref{eqn:masscons} -- \eqref{eqn:momcons} and $\numu$ be a solution of \eqref{eqn:dis_scheme_1} -- \eqref{eqn:dis_scheme_2}. Then 
	\begin{equation}
		\begin{aligned}
			\frac 12 c_0 ||\numu^n -\hu^n||_2^2 &\leq 2 C_0 \left(\prod_{j=1}^n {w}_j^{-1}\right) ||\numu^0 -\numhu^0||_2^2 + \tilde C(\tStep^2 + \xStep^2) + \mu^2 C||\error||_{\besov{\infty}{2}}^2,
		\end{aligned}
	\end{equation}
	with the constants as introduced in Theorem \ref{thm:unif_conv} and $\tilde C$ a modified constant, that is still independent of $n$, $\tau$ and $\xStep$.
\end{corollary}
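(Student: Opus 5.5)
We derive the corollary from Theorem \ref{thm:unif_conv} via the error splitting \eqref{eqn:dis_error_split}. By the triangle inequality and $(a+b)^2\le 2a^2+2b^2$,
\begin{equation*}
\|\numu^n-\hu^n\|_2^2 \le 2\|\numu^n-\numhu^n\|_2^2 + 2\|\numhu^n-\hu^n\|_2^2 .
\end{equation*}
For the first term we use Theorem \ref{thm:unif_conv}. Dropping the nonnegative weighted sum of $\|\numvel^k-\numhvelNL{k}\|_3^3$ on its left-hand side gives a bound for $\frac12 c_0\|\numu^n-\numhu^n\|_2^2$. This bound consists of the exponentially weighted initial error, a term $C(\tStep^2+\xStep^2)$, and a term $\mu^2C\|\error\|^2_{\besov{\infty}{2}}$.

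It remains to bound the pure projection error $\|\numhu^n-\hu^n\|_2$ by $C\xStep$ uniformly in $n$. The density component is immediate: Lemma \ref{lemma:proj_estimates} gives $\|\hdens^n-\Pi_\xStep\hdens^n\|_2\le c\xStep\|\partial_x\hdens\|_{\besov{\infty}{2}}$. For the velocity component we compare $\numhvelNL{n}$ with $\hvel^n$ and apply estimate \eqref{eqn:numhvel_bound} of Lemma \ref{lemma:dt_proj_estimates} with $*=2$. Both constants depend only on the regularity and bounds in \ref{ass:unif} (in particular on $\partial_x\hdens,\partial_x\hflow\in L^\infty(0,\infty;L^2)$), so they are independent of $n$, $\tStep$ and $\xStep$. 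Hence $\frac12 c_0\|\numhu^n-\hu^n\|_2^2\le C\xStep^2$.

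Combining the two pieces, the factor $2$ from the splitting inflates the constants in front of the initial-error and measurement-error terms. To recover exactly the stated form, with leading factor $2C_0$ on the initial term, we would instead use a weighted Young inequality $\|a+b\|^2\le(1+\epsilon)\|a\|^2+(1+\epsilon^{-1})\|b\|^2$. Alternatively, we can simply absorb the factor into the generic constants $C$ and $\tilde C$, which the statement allows. The projection contribution and the $(\tStep^2+\xStep^2)$ term from the theorem are merged into a new constant $\tilde C$, still independent of $n$, $\tStep$ and $\xStep$.

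The only point needing care is this uniformity of the projection error in time. It follows because \ref{ass:unif} imposes the $W^{1,\infty}$-in-space regularity uniformly on $\tDom$, which is exactly what Lemma \ref{lemma:dt_proj_estimates} uses. There is no genuine obstacle beyond this bookkeeping of constants.
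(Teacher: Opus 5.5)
Your proposal is correct and follows the paper's proof essentially verbatim. You split the error via the triangle inequality and bound $\|\numu^n-\numhu^n\|_2$ by Theorem \ref{thm:unif_conv}. You then bound the projection error $\|\numhu^n-\hu^n\|_2\le C\xStep$, using Lemma \ref{lemma:proj_estimates} for the density and estimate \eqref{eqn:numhvel_bound} for the velocity, with independence of $n$ coming from the uniform-in-time regularity in \ref{ass:unif}.

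Your remark about the factor introduced by squaring the splitting is a fair point that the paper also glosses over. Note, however, that the weighted Young inequality gives a leading constant $2(1+\epsilon)C_0$, not exactly $2C_0$. The stated constant on the initial-error term therefore holds only up to such a harmless inflation, and it cannot be absorbed into the generic constants $C$ and $\tilde C$.
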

\begin{proof}
	The corollary is a direct consequence of Theorem \ref{thm:unif_conv} and projection error estimates for the original system state $\hu$, given the regularity from Assumption \ref{ass:unif} and in particular that $\hu$ is assumed to be uniformly Lipschitz. More precisely, using Lemma \ref{lemma:proj_estimates}, we have 
	\begin{equation}
		|| \hdens^n  - \numhdens^n||_2 \leq c \xStep ||\partial_x\hdens||_{\besov{\infty}{2}} \leq C \xStep
	\end{equation}
	 with $C > 0$ independent of $n$. Further, from Lemma \ref{lemma:dt_proj_estimates}, we have 
	 \begin{equation}
	 	||\numhvelNL{n} - \hvel^n||_2 \leq C \xStep.
	 \end{equation}
	 Then, the error splitting
	 \begin{equation}
	 	||\numu^n -\hu^n||_2 \leq ||\numu^n -\numhu^n||_2  + ||\numhu^n -\hu^n||_2 
	 \end{equation}
	 shows the statement of the corollary.
\end{proof}

%% file: Sections/numerics.tex
\section{Numerical Experiments}
\label{numerics}
In this section we present several numerical experiments to illustrate the error bound. In particular, we want to highlight the following three points: 
\begin{itemize}
	\item We observe an initial exponential decay of the error of the numerical solution, followed by a plateau in error, which does not increase in time.
	\item We observe that the height of the plateau is proportional to $\xStep$ and $\tStep$.
	\item How quickly the error approaches the plateau depends on the nudging parameter $\mu$, with overly large $\mu$ leading to slower synchronization, see Remark \ref{rem:conv_rate}.
\end{itemize}
To compare the numerical solution of the observer system to the original system state, we cannot solve the original system using some numerical scheme. Unlike the observer system, the reference solution computed this way could develop large errors over time, making it unsuitable for comparison.
 Because of this, we instead use manufactured solutions, that is, we prescribe a function $\hu$ and then compute source terms $\src_1$ and $\src_2$ such that equations \eqref{eqn:masscons} -- \eqref{eqn:momcons} are satisfied.
\subsection{Implementation details}
A reference implementation of the numerical scheme for the observer and the required data to generate the plots is located at {\url{https://git-ce.rwth-aachen.de/publications_chaumet/observers}}.

For the source term $\src_1$, the numerical algorithm requires evaluating the $L^2$ projection $\Pi_\xStep$, i.e. it is required to compute cell-wise averages of $\src_1$. We do this using the trapezoid rule. Consider a cell $[x_i,x_{i+1}]$ with $0 \leq i \leq M-1$, then we have $\Pi_\xStep(\src_1^n)\big\lvert_{[x_i,x_{i+1}]} = \frac 12 ( \src_1^n(x_i) + \src_1^n(x_{i+1})) + \symcal{O}(\xStep^2)$. Handling the $\symcal{O}(\xStep^2)$ terms similarly to the measurement error, one can see that these errors will not accumulate, and because they are at second order in $\xStep$, they will not significantly influence the plateau of the error. Computing the $L^2$ errors $||\numu^n - \hu^n||_2$ requires numerical integration and we use the trapezoid rule for this as well.

We solve the nonlinear system of equations \eqref{eqn:dis_scheme_1} -- \eqref{eqn:dis_scheme_2} using the Newton method, taking the discrete solution of the previous time step as an initial guess in each new time step. We prescribe a solver tolerance of at least $10^{-12}$ on the residuals with respect to the maximum norm, however, in the following experiments, we always achieve residuals less than $10^{-15}$ for each time step.
\subsection{Exponential decay and convergence order}
For the following numerical examples we consider the ideal gas law, $p(\hdens) = \hdens$. The corresponding pressure potential is $\ppot(\hdens) = \hdens\log(\hdens)$. We set $\xDom = [0,1]$ and run our simulations until $t^n = 40$ for some $n \in \symbb{N}$.
 We then choose the following manufactured solution: 
\begin{align}
	\hdens(x,t) &= 2 + \sin(\pi(x+t)), \quad \text{and} \\
	\hvel(x,t) &= \frac 1{10} \sin(\pi x) \cos(\pi t).
\end{align}
 We set the friction parameter $\gamma = 0.1$ and choose $\mu = 1$ for the first experiment.
Then, $(\hdens,\hvel)$ is a solution of system \eqref{eqn:obs_masscons} -- \eqref{eqn:obs_momcons} with the boundary conditions $\hflow_\partial\lvert_{x=0} \coloneqq 0$ and $\hEnthalp_\partial\lvert_{x=1}(t) \coloneqq 1 + \log(2 + \sin(\pi(1 + t)))$ with 
\begin{align}
	\src_1(x,t) \coloneqq& \pi\cos(\pi (t + x)) + \frac {1} {10}\pi\cos (\pi t) (\sin (\pi (t + 2 x)) + 
	2\cos (\pi x)), \quad \text{and}\\
	\src_2(x,t) \coloneqq& \frac {1} {100}\sin (\pi x)\left (\gamma\cos (\pi t) \abs{ \cos (\pi \
	t)\sin (\pi x)}  +\pi\cos^2 (\pi t)\cos (\pi x) - 
	10\pi\sin (\pi t) \right) \\\nonumber&+ \frac {\pi\cos (\pi (t + 
		x))} {\sin (\pi (t + x)) + 2}.
\end{align}
\begin{figure}[h]
	\includegraphics[width = 0.5\textwidth]{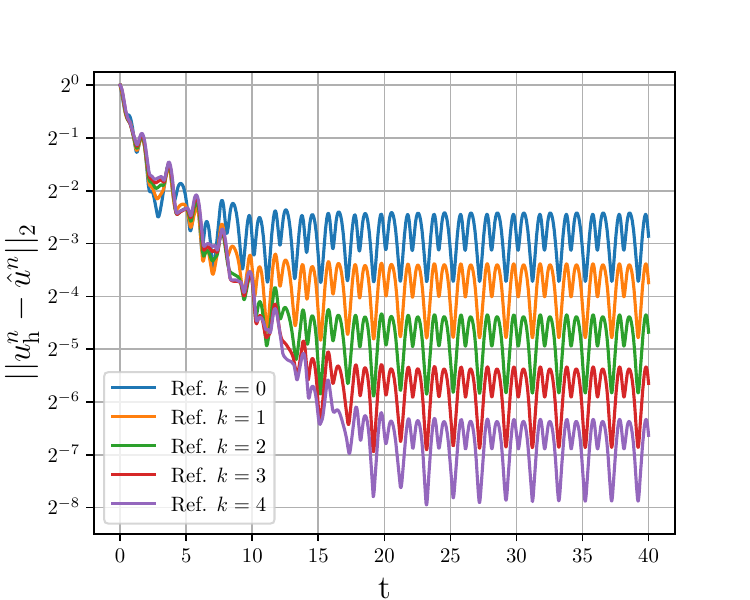}
	\caption{Absolute $L^2$ error over time for $\mu = 1$. The error is normalized such that $||\numu^0 - \hu^0||_2 = 1$ at $t=0$.}
	\label{fig:err_mu1}
\end{figure}
For the following numerical studies, we set the following refinement levels for the spatial- and temporal grids: $M_k \coloneqq 30\cdot 2^k$ and $N_k = 1200\cdot 2^k$, such that $\xStep_k = \tStep_k$. We assume that the density $\hdens$ is unknown and initialize the observer with $\numdens^0(x) = 2.5\,$.

 Note that, because we do not explicitly verify the smallness assumption \ref{ass:smallderivs}, it is important to initialize the observer with some density that is not too far from the original system's density. This avoids the numerical solution of the observer developing a shock before the observer term has had sufficient time to stabilize the observer's dynamics towards the original system. When the observer system develops a shock, the Newton method does not converge in our experiments anymore.

 Figure \ref{fig:err_mu1} shows the evolution of the $L^2$ error over time for refinement levels $k = 0, \dotsc, 4$. We normalize the error at $t=0$ such that $||\numu^0 - \hu^0||_2 = 1$. We observe that the error initially decays exponentially until it plateaus at some error level that depends on $\xStep$ and $\tStep$. We see that with each refinement by a factor of two, the final plateau of the error also decreases by about a factor of two,  which shows the predicted linear convergence. The curves seem to coincide across all refinement levels during the exponential part of the error decay. This illustrates that the exponential decay rate that is achieved is (approximately) independent of the discretization parameters.
 
 \subsection{Influence of the nudging parameter}
 In this set of simulations, we study the influence of $\mu$ on the exponential decay of the error. We use the same manufactured solution as we used for the first test and the same refinement levels. However, we choose $\mu = 5$ and $\mu = 25$ for the following tests.

\begin{figure}[h]
	\begin{subfigure}{0.45\textwidth}
		\includegraphics[width=\textwidth]{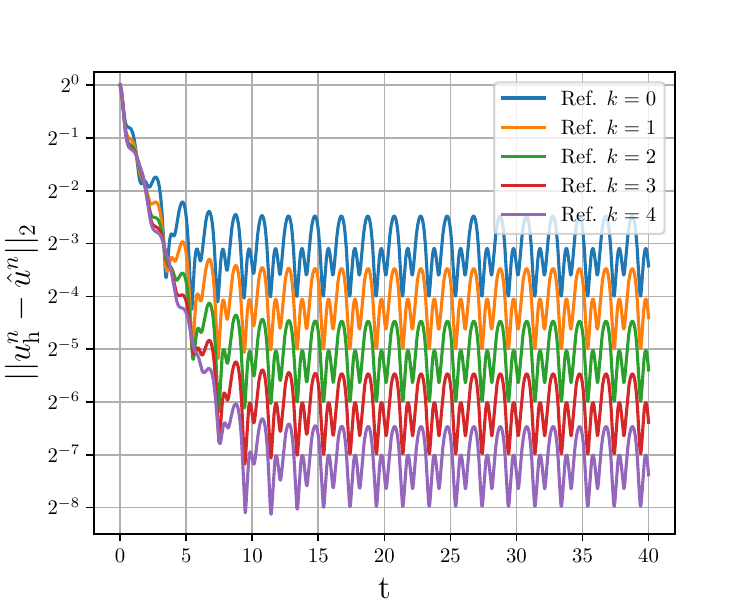}
		\caption{Evolution of the error for $\mu = 5$}
		\label{fig:err_mu5}
	\end{subfigure}
	\begin{subfigure}{0.45\textwidth}
		\includegraphics[width=\textwidth]{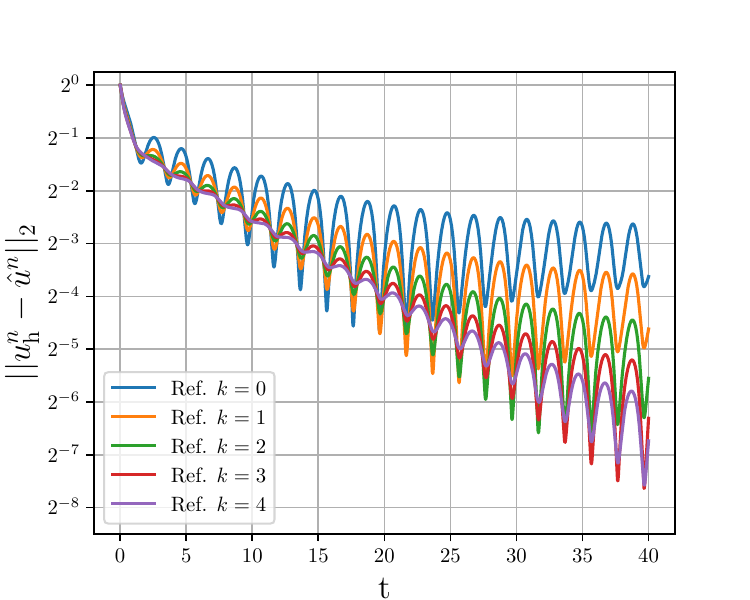}
		\caption{Evolution of the error for $\mu = 25$}
		\label{fig:err_mu25}
	\end{subfigure}
	\caption{Comparison of the convergence speed for different values of $\mu$. Overly large $\mu$ can lead to slow convergence.}
	\label{fig:err_comparison}
\end{figure}
We show the comparison of the convergence speed of the observer in Figure \ref{fig:err_comparison}. In Fig. \ref{fig:err_mu5}, we see that increasing $\mu$ to $5$ has reduced the time for the observer's error to plateau by about a factor of $2$. The finest refinement for $\mu = 5$ reaches its steady-state at about $t = 10$ while the observer with $\mu = 1$ takes up until about $t = 20$ to reach its steady state. In this example, we observe that the final error level that is achieved seems to be mostly unaffected by increasing $\mu$. 

 On the other hand, for $\mu = 25$, in Fig. \ref{fig:err_mu25} we see that the observer converges towards the original system state significantly slower. Its error even fails to plateau for the two finest refinements, within the time interval we consider. However, in the scenarios where the observer does seem to reach the plateau in its error, again, the error level seems quite similar to the final error level for the other two choices of $\mu$.
 This illustrates the importance of choosing a suitable value of $\mu$, even when measurement errors are small or absent entirely.

%% file: Sections/conclusion.tex
\section{Discussion}
In this paper we have proved an error bound for a fully discrete observer for the barotropic Euler equations. We show that the observer term stabilizes the system in such a way that one has an error bound that is uniform-in-time. Thus, the discrete observer stays accurate even for long-time simulations. In particular, we show that the observer converges exponentially with respect to the error in the initial guess of the state, up to additive contributions due to the discretization and measurement errors, which do not increase with time.

Our proof is based on a relative energy approach, which limits it to solutions that are uniformly Lipschitz in time and we require some smallness assumptions to show decay for arbitrary $\mu$. However, the intermediate proof steps also give some insight and criteria on how to choose a “good” value for $\mu$, showing that one has to balance the increase of measurement errors, the applicability of the estimate and the convergence speed. As an extension, one might also consider a time-dependent nudging parameter, that decreases over time, to benefit from a fast exponential decay rate initially, but obtain a more accurate prediction once measurement errors become a significant source of error.

We illustrate our error estimate in a numerical study. While our estimates are certainly not sharp in general and only present an upper bound, the qualitative features of our error estimate are visible in the numerical study, regardless.

To extend the range of applications for observer-based data assimilation for hyperbolic balance laws, it would be of interest to further study how to extend our results to discontinuous solutions.
In similar spirit, in this work, we have only focused on the barotropic Euler equations in one space dimension. The next step is to consider either two dimensions, or an extension to networks. For the network case, \cite{kunkel_obs} shows that the results can be extended to star-shaped networks, and we think the same should be possible for the discrete observer. However, an extension to more general network topologies seems challenging and would require additional understanding at the continuous level first.